\newtheorem{thm}{Theorem} [subsection]
\newtheorem{thm2}{Theorem} [section]
\theoremstyle{definition}
\newcommand{\bal}{{\mbox{\boldmath$\alpha$}}}
\newtheorem{rem}[thm]{Remark}
\newtheorem{rem2}[thm2]{Remark}
\theoremstyle{plain}
\newtheorem{prop}[thm]{Proposition}
\newtheorem{lem}[thm]{Lemma}
\newtheorem{cor}[thm]{Corollary}
\newtheorem{conj}[thm]{Conjecture}
\numberwithin{equation}{section}
\newcommand{\C}{\mathbb C}
\newcommand{\esi}{\mathfrak{e}_6}
\newcommand{\ese}{\mathfrak{e}_7}
\newcommand{\ee}{\mathfrak{e}_8}
\newcommand{\ff}{\mathfrak{f}_4}
\newcommand{\g}{\mathfrak{g}}
\newcommand{\gl}{\mathfrak{gl}}
\newcommand{\h}{\mathfrak{h}}
\newcommand{\KK}{\mathbb K}
\newcommand{\ov}{\overline}
\newcommand{\sll}{\mathfrak{sl}_2}
\newcommand{\Z}{\mathbb Z}
\newcommand{\extp}{\@ifnextchar^\@extp{\@extp^{\,}}}
\def\@extp^#1{\mathop{\bigwedge\nolimits^{\!#1}}}
\newcommand*{\@rowstyle}{}
\newcommand*{\rowstyle}[1]{
  \gdef\@rowstyle{#1}%
  \@rowstyle\ignorespaces%
}
\newcolumntype{=}{
  >{\gdef\@rowstyle{}}%
}
\newcolumntype{+}{
  >{\@rowstyle}%
}
\DeclareMathOperator*{\ad}{ad}
\DeclareMathOperator*{\sdim}{sdim}
\DeclareMathOperator*{\im}{im}
\DeclareMathOperator*{\Rep}{Rep}
\DeclareMathOperator*{\Vecc}{Vec}
\DeclareMathOperator*{\sVec}{sVec}
\DeclareMathOperator{\Ver}{Ver}
\title[New Constructions of Exceptional Simple Lie Superalgebras]{New Constructions of Exceptional Simple Lie Superalgebras With Integer Cartan Matrix in Characteristics $3$ and $5$ Via Tensor Categories}
\author[A.S. Kannan]{Arun S. Kannan}%
\address{Department of Mathematics, Massachusetts Institute of Technology, Cambridge, MA 02139} \email{akannan@mit.edu}
\begin{document}

\begin{abstract}
Using tensor categories, we present new constructions of several of the exceptional simple Lie superalgebras with integer Cartan matrix in characteristic $p = 3$ and $p = 5$ from the complete classification of modular Lie superalgebras with indecomposable Cartan matrix and their simple subquotients over algebraically closed fields by Bouarroudj, Grozman, and Leites in 2009. Specifically, let $\bal_p$ denote the kernel of the Frobenius endomorphism on the additive group scheme $\mathbb{G}_a$ over an algebraically closed field of characteristic $p$. The Verlinde category $\Ver_p$ is the \textit{semisimplification} of the representation category $\Rep \bal_p$, and $\Ver_p$ contains the category of super vector spaces as a full subcategory. Each exceptional Lie superalgebra we construct is realized as the image of an exceptional Lie algebra equipped with a nilpotent derivation of order at most $p$ under the semisimplification functor from $\Rep \bal_p$ to $\Ver_p$.
\end{abstract}

\maketitle

\setcounter{tocdepth}{1}
\tableofcontents

\section{Introduction}
In \cite{bouarroudj2009classification}, the finite-dimensional modular Lie superalgebras over $\KK$ with indecomposable Cartan matrix are classified, where $\KK$ is an algebraically closed field. In characteristic $5$, the classification includes Lie superalgebras arising from reducing the non-exceptional classical (or serial) simple Lie superalgebras over $\mathbb{C}$ modulo $5$, reductions of the parametric family $\mathfrak{osp}(4|2; \alpha)$ modulo $5$ for $\alpha \in \Z_{\neq 0}$, the Brown Lie superalgebra $\mathfrak{brj}_{2;5}$, and the Elduque Lie superalgebra $\mathfrak{el}(5; 5)$. In characteristic $3$, the classification includes the mod $3$ reductions of the non-exceptional classical simple Lie superalgebras, reductions of the parametric family $\mathfrak{osp}(4|2; \alpha)$ modulo $3$ for $\alpha \in \Z_{\neq 0}$, the Brown Lie superalgebra $\mathfrak{brj}_{2;3}$, the ten Elduque and Cunha Lie superalgebras, and a characteristic $3$ analog $\mathfrak{el}(5; 3)$ of $\mathfrak{el}(5; 5)$.
\par
In this paper, we use the semisimplification of tensor categories to produce new constructions of the ten Elduque and Cunha Lie superalgebras in characteristic $3$, the Elduque Lie superalgebra in characteristic $5$ first discovered in \cite{elduque2007some}, its characteristic $3$ analog discovered in \cite{bouarroudj2009classification}, and the Brown Lie superalgebra in characteristic $3$. In particular, we can consider a Lie algebra $\g$ over $\KK$ with a nilpotent derivation $d$ of order at most $p$, where $p > 2$ is the characteristic of $\KK$; this can then be realized as a Lie algebra in the category $\Rep \KK[t]/(t^p)$ of $\KK[t]/(t^p)$-modules by specializing $t$ to $d$. The semisimplification of this category is the Verlinde category $\Ver_p$, which contains as a full subcategory the category of super vector spaces $\sVec_\KK$. Therefore, the image $\ov{\g}$ of $\g$ under the semisimplification functor projected onto this full subcategory is a Lie algebra in $\sVec_\KK$, which is precisely a Lie superalgebra. For more details, one can refer to \cite{etingof2018koszul, etingof2019semisimplification, ostrik2015symmetric}.
\par
In every case, we start with one of the exceptional simple Lie algebras $\mathfrak{br}_3, \ff, \esi, \ese$, and $\ee$ and then choose an appropriate nilpotent element of order $3$ or $5$, and then semisimplify. The software SuperLie (cf. \cite{SuperLie}) greatly simplified verifying this process. Most of these constructions are a consequence of the main theorem, Theorem \ref{maint}. Specifically, in characteristic $3$, one can start with a finite-dimensional Lie algebra $\g(A)$ with Cartan matrix $A$ and choose a suitable nilpotent element $e \in \g(A)$ that is the sum of various Chevalley generators to realize $\g(A)$ as an object in $\Rep \bal_3$. Then, the semisimplification of the derived algebra $\g(A)^{(1)}$ is the derived algebra $\g(\widetilde{A})^{(1)}$, where $\widetilde{A}$ is some other Cartan matrix related to $A$ and can be determined by the choice of $e$. Either $\g(\widetilde{A})$ is simple, or $\g(\widetilde{A})^{(1)}/\mathfrak{c}$ is simple, where $\mathfrak{c}$ is the center of the derived algebra. This actually gives us semisimplifications of many finite-dimensional Lie algebras, not just the exceptional ones. For characteristic $3$, the results are summarized in the following table:

\begin{longtable}[c]{@{}lll@{}}
  \toprule
   Lie algebra & Nilpotent element  & Lie superalgebra \\* \midrule
  \endfirsthead
  \multicolumn{3}{c}%
  {{\bfseries Table continued from previous page}} \\
  \toprule
   Lie algebra & Nilpotent element(s)  & Lie superalgebra  \\* \midrule
  \endhead
  \bottomrule
  \endfoot
  \endlastfoot
  $\mathfrak{br}_3$ & $e_1, e_2$ & $\mathfrak{brj}_{2;3}$  \\ \midrule
   $\ff$ &  $e_4$ & $\g(1,6)$  \\ \midrule
   $\esi^{(1)}$ & $e_1, e_2, e_6$ &  $\g(2, 6)^{(1)}$ \\
   & $e_1 + e_2$, $e_2 + e_6$, $e_1 + e_6$  & $\g(3,3)^{(1)}$  \\
   & $e_1 + e_2 + e_6$  & $\g(2,3)^{(1)}$ \\ \midrule
   $\ese$ & $e_1, e_2, e_7$ & $\g(4,6)$  \\
   & $e_1 + e_2$, $e_2 + e_7$, $e_1 + e_7$ &  $\mathfrak{el}(5; 3)$ \\
   & $e_1 + e_2 + e_7$  & $\g(4,3)$ \\
   & $e_1 + e_2 + e_5 + e_7$  & $\g(1, 6)$ \\ \midrule
   $\ee$ & $e_1, e_2, e_8$ &  $\g(8, 6)$  \\
   & $e_1 + e_2$, $e_2 + e_8$, $e_1 + e_8$  & $\g(6, 6)$ \\
   & $e_1 + e_2 + e_8$  &  $\g(8, 3)$  \\
   & $e_1 + e_2 + e_6 + e_8$ & $\g(3, 6)$  \\* \bottomrule
  \end{longtable}
Finally, in characteristic $5$, semisimplifying $\ee$ with respect to $e_2 + e_3 + e_4$ gives the Elduque Lie superalgebra $\mathfrak{el}(5;5)$.
\par
The original construction of the Elduque and Cunha Lie superalgebras are based on symmetric composition algebras and the Elduque Supermagic Square, an analog of Freudenthal's Magic Square, where division algebras are used to construct the exceptional Lie superalgebras (cf. \cite{elduque2006new,cunha2006extended,cunha2007extended}). A conceptual explanation relating this original approach to semisimplification is developed in \cite{elduque2022}. In \cite{bouarroudj2006cartan,bouarroudj2020roots}, explicit descriptions are given in terms of Cartan matrices and generators and relations. It can be difficult to work with such descriptions of the Lie superalgebras, and the author hopes that the theoretical framework of symmetric tensor categories makes working with these Lie superalgebras more tractable. 
\par
It would be interesting to see if this approach can be applied to construct vectorial simple Lie superalgebras, which were not considered, or the remaining exceptional simple Lie superalgebras with integer Cartan matrix we were not able to construct. Specifically, there are four such Lie superalgebras. Three of these come from the parametric family of Lie superalgebras $\mathfrak{osp}(4|2; \alpha)$, which are also known as $D(2|1; \alpha)$ in the literature: when $\alpha = 2$ in characteristic $p = 3$ and when $\alpha = 2,3$ in characteristic $p = 5$. These are deformations of $\mathfrak{osp}(4|2) = \mathfrak{osp}(4|2; 1)$. The fourth is the Brown Lie superalgebra $\mathfrak{brj}_{2;5}$ in characteristic $5$. We pose the following \textbf{open problem}: construct the remaining Lie superalgebras using semisimplification, or explain why it is not possible. In the case that it is possible, this may help us find an analog to Theorem \ref{maint} in characteristic $5$, which in turn may help us find a statement for all characteristics. 
\par
Lastly, one can also use semisimplification to construct representations of Lie superalgebras by semisimplifying representations of the Lie algebras they come from; very little is known about the representation theory of the exceptional Lie superalgebras, so this may be particularly interesting. 
\par 
\textbf{Acknowledgements.} The author would like to deeply thank his advisor, Pavel Etingof, for pointing out that these constructions were possible and for his patience and guidance. The author would also like to thank Julia Plavnik,  Guillermo Sanmarco, and Iv\'{a}n Angiono for pointing out that Theorem \ref{maint} can be applied to construct the Brown Lie superalgebra $\mathfrak{brj}_{2;3}$. The author would finally like to thank the two anonymous referees for their invaluable feedback. This paper is based upon work supported by The National Science Foundation Graduate Research Fellowship Program under Grant No. 1842490 awarded to the author.

\textbf{Disclosures.} The author declares that he has no conflict of interest.

\section{Contragredient Lie Superalgebras}\label{contraglie}
The Lie superalgebras listed above can all be realized as contragredient Lie superalgebras, which means they arise from a Cartan matrix. \footnote{There are other, inconsistent usages of the term ``contragredient" in the literature, but for convenience we will use it rather than saying ``Lie superalgebra with Cartan matrix" every time.} We will use this formulation to prove that each Lie superalgebra we construct from semisimplification is isomorphic to the corresponding exceptional simple Lie superalgebra. For the reader's convenience, we recall the basics on contragredient Lie superalgebras here.
\par
Fix an algebraically closed field $\KK$ of characteristic $p \geq 0$. Let $n$ be a positive integer, and let $A = (a_{ij})$ be an arbitrary $n\times n$ matrix with entries in $\Z$, called the \textit{Cartan matrix}. Let $\overline{A} = (\ov{a}_{ij})$ be the matrix obtained by reducing the entries of $A$ modulo $p$ (if $p = 0$, $A = \ov{A}$), and let $s$ be the rank of $\ov{A}$. Let $\widetilde{\mathfrak{h}}$ be a vector space over $\KK$ of dimension $2n - s$. Let $\{h_i\}_{i=1}^{n}$ be a collection of $n$ linearly independent vectors in $\widetilde{\mathfrak{h}}$, and let $\{\alpha_i\}_{i=1}^n$ be vectors in $\widetilde{\mathfrak{h}}^*$ such that $\alpha_j(h_i) = \ov{a}_{ij}$ for all $1 \leq i, j \leq n$, where we interpret $\Z/p\Z \subset \KK$ in the usual way. Let $I = \{i_1, \dots, i_n\} \subset (\Z/2\Z)^n$ be a collection of parities, and let $\mathrm{par}(j) \coloneqq i_j$ for $1 \leq j \leq n$.
\par
Then, we define the Lie superalgebra $\widetilde{\g}(A, I)$ as follows: it is the Lie superalgebra generated by $e_1, \dots, e_n, f_1,\dots, f_n$, called \textit{Chevalley generators}, and $\widetilde{\mathfrak{h}}$ such that the parity of $e_j$ and $f_j$ is $i_j \in I$ for all $1 \leq j \leq n$, $\widetilde{\h}$ is purely even, and such that they are subject to the following relations:

\begin{equation}\label{rels}
  [e_i, f_j] = \delta_{ij}h_i; \ \ [h, e_j] = \alpha_j(h)e_j; \ \ [h, f_j] = -\alpha_j(h)f_j; \ \ [\widetilde{\mathfrak{h}}, \widetilde{\mathfrak{h}}] = 0,
\end{equation}
for all $1 \leq i, j \leq n$ and $h \in \widetilde{\mathfrak{h}}$. The Lie algebra $\widetilde{\h}$ is the \textit{maximal torus}.
\par
It can be shown that $\widetilde{\g}(A, I)$ admits a triangular decomposition of Lie subsuperalgebras $\widetilde{\g}(A, I) = \widetilde{\mathfrak{n}}^- \oplus \widetilde{\mathfrak{h}} \oplus \widetilde{\mathfrak{n}}^+$, where $\widetilde{\mathfrak{n}}^-$ is the  Lie subsuperalgebra freely generated by the $f_i$'s and $\widetilde{\mathfrak{n}}^+$ is the Lie subsuperalgebra freely generated by the $e_i$'s. As usual, there is a root space decomposition, which gives a $Q$-grading, where

\begin{equation}\label{qgrading}
  Q \coloneqq \bigoplus_{k=1}^n \Z\alpha_k; \ \ \mathrm{deg}(e_i) = \alpha_i; \ \ \mathrm{deg}(f_i) = -\alpha_i; \ \ \mathrm{deg}(h) = 0
\end{equation}
for $1 \leq i \leq n$ and $h \in \widetilde{\mathfrak{h}}$.
\par
We define the contragredient Lie superalgebra $\mathfrak{g}(A, I)$ to be the quotient $\widetilde{\g}(A, I)/\mathfrak{r}$, where $\mathfrak{r}$ is the maximal graded ideal that trivially intersects $\widetilde{\mathfrak{h}}$ (indeed, such an ideal is unique because the sum of any two such ideals also trivially intersects $\widetilde{\mathfrak{h}}$). The quotienting preserves the triangular decomposition in the sense that $\mathfrak{r} = \mathfrak{r}\cap \widetilde{\mathfrak{n}}^- \oplus  \mathfrak{r}\cap \widetilde{\mathfrak{n}}^+$. Hence, the triangular decomposition descends to a triangular decomposition on 
\[\mathfrak{g}(A, I) = \mathfrak{n}^- \oplus \widetilde{\mathfrak{h}} \oplus \mathfrak{n}^+.\] Similarly, we have a root space decomposition and a $Q$-grading on $\mathfrak{g}(A, I)$. The images of $\{e_i, f_i, h_i\}$ for $1 \leq i \leq n$ under the projection map by $\mathfrak{r}$ are still linearly independent, so we will by abuse of notation still use $\{e_i, f_i, h_i\}$ to refer to these images and also call them Chevalley generators. The context will make it clear which set of Chevalley generators we will refer to.
\par
The derived algebra $\widetilde{\g}(A, I)^{(1)}$ of $\widetilde{\g}(A, I)$ will be more useful for us because it is generated by the Chevalley generators (but these two are the same when $\ov{A}$ has full rank). It is easily seen that 

\[\widetilde{\g}(A, I)^{(1)} = \widetilde{\mathfrak{n}}^- \oplus \mathfrak{h} \oplus \widetilde{\mathfrak{n}}^+,\] 
where $\mathfrak{h}$, called the \textit{Cartan subalgebra}, is the subspace of $\widetilde{\mathfrak{h}}$ spanned by $h_1, \dots, h_n$. \footnote{This terminology is not standard. The usual definition of the Cartan subalgebra is the maximal nilpotent subalgebra coinciding with its normalizer. However, it is convenient to use this term to refer to $\mathfrak{h}$.} The maximal graded ideal of $\widetilde{\g}(A, I)^{(1)}$ that trivially intersects $\mathfrak{h}$ is the same as $\mathfrak{r}$, so it is also easily seen that 

\[\widetilde{\g}(A, I)^{(1)}/\mathfrak{r} \cong \g(A, I)^{(1)} = \mathfrak{n}^- \oplus \mathfrak{h} \oplus \mathfrak{n}^+.\] 
The Chevalley generators clearly generate $\g(A, I)^{(1)}$. By an abuse of language, we will also refer to subquotients of $\g(A,I)$ as contragredient Lie superalgebras, as otherwise it can be extremely inconvenient to be explicit. We will also refer to $A$ as the Cartan matrix of these subquotients. For instance, we will call $\g(A, I)^{(1)}$ a contragredient Lie superalgebra, and sometimes it has a center $\mathfrak{c}$; in this case, we will also call $\g(A, I)^{(1)}/\mathfrak{c}$ a contragredient Lie superalgebra.
\begin{rem2}
  The difference between $\mathfrak{g}(A, I)$ and $\mathfrak{g}(A, I)^{(1)}$ is that the former has a maximal torus $\widetilde{\mathfrak{h}}$ which is potentially larger than $\mathfrak{h}$, which happens when $\ov{A}$ doesn't have full rank. We will work with the latter because it is generated by the Chevalley generators. A reason for working with $\mathfrak{g}(A, I)$ is because the maximal torus $\widetilde{\h}$ enables one to do representation theory, in the sense that one can construct a non-degenerate invariant bilinear form, a generalized Casimir operator, and highest weight modules (cf. \cite{Kac1990InfiniteDim}). The situation here is totally analogous to affine Lie algebras, where one extends the central extension $\g[t, t^{-1}] \oplus \KK c$ of a loop algebra by the derivation $d = t\tfrac{d}{dt}$ (cf. \cite{kac1987bombay}).
\end{rem2}
\par
If one takes $I$ to consist of solely zero parities, then the Lie superalgebra is purely even and we recover the definition of a contragredient Lie algebra (cf. \cite{Kac1990InfiniteDim}). We point out that unlike the classical case, ``inequivalent" choices of Cartan matrices $A$ and parity set $I$ can produce isomorphic Lie superalgebras (cf. \cite{bouarroudj2009classification} for a definition of ``inequivalent''). We can also do a similar construction where the entries of $A$ are arbitrary elements of $\KK$ (cf. \cite{cunha2007extended}), but the definitions provided will suffice for our purposes and are more convenient.
\par
From now on, we will assume that any Cartan matrix $A = (a_{ij})$ will satisfy the following properties: \footnote{These properties are a variation of those of what is usually called \textit{a generalized Cartan matrix} (cf. \cite{hoyt2007classification}).}
\begin{enumerate}
  \item $a_{ii} \in \{\ov{0}, 2\}$ if $\mathrm{par}(i) = 0$;
  \item $a_{ii} \in \{0,1\}$ if $\mathrm{par}(i) = 1$;
  \item if $a_{ii} \in \{1,2\}$ then $a_{ij} \in \Z_{\leq 0}$ for all $j$;
  \item $a_{ij} = 0 \Leftrightarrow a_{ji} = 0$ for all $i \neq j$.
\end{enumerate}
We use the notation $a_{ii} = \ov{0}$ to indicate that $a_{ii} = 0$ but $i$ is even. Under these assumptions, we can deduce the parity set $I$ from $A$ (and hence, it may be omitted and we will write $\g(A)$ instead of $\g(A, I)$). We emphasize that we consider a very special class of Cartan matrices. 
\par
A Cartan matrix $A$ is \textit{indecomposable} if there is no suitable permutation of rows and columns of $A$ that can make $A$ block diagonal. Lastly, we will say $A$ is \textit{symmetrizable} if it factors as the product of a diagonal matrix and a symmetric matrix. In general, one can take elements in the ground field (cf. \cite{hoyt2007classification,chapovalov2010classification} for definitions over $\mathbb{C}$ and \cite{bouarroudj2009classification,bouarroudj2020roots} in the modular case).
\par
Recall that a \textit{Kac-Moody Lie algebra} over $\KK$ is a contragredient Lie algebra with symmetrizable Cartan matrix (with respect to the purely even parity set). For Kac-Moody Lie algebras over $\mathbb{C}$ with Cartan matrix $A = (a_{ij})$, which include the simple Lie algebras, the defining relations of the ideal $\mathfrak{r}$ are the well known \textit{Serre relations} (cf. \cite{Kac1990InfiniteDim}):
\begin{align*}
    (\mathrm{ad} \ {e_i})^{1-a_{ij}}(e_j) = (\mathrm{ad} \ {f_i})^{1-a_{ij}}(f_j) = 0
\end{align*}
for all $1 \leq i \neq j \leq n$. Over $\KK$ these relations also hold, but there may be others. In general, the defining relations are not easily determined. For all of the exceptional simple Lie superalgebras we consider, there exists an analog of the Serre relations (cf. Proposition  2.5.1 in \cite{bouarroudj2006cartan}). In fact, a defining set of relations in terms of Chevalley generators has been determined (cf. \cite{bouarroudj2010divided,bouarroudj2008new}).
\par
For more details on the theory of contragredient Lie (super)algebras and double extensions of loop (super)algebras, one can refer to \cite{Kac1990InfiniteDim, serganova2011kac} and the last two sections in \cite{bls2019}.

\section{Constructing Lie Superalgebras from Lie Algebras Using Semisimplification}
In this section, we describe how one can construct a Lie superalgebra from a Lie algebra using \textit{semisimplification}, which will be the procedure used to construct the exceptional simple Lie superalgebras. From now on, we will assume that $\KK$ is an algebraically closed field of characteristic $p > 2$. This section will draw from the theory of \textit{symmetric tensor categories}, which abstract the key properties of the representation category of an affine (super) group scheme. A down-to-earth reference is \cite{etingof2021lectures}; a more thorough treatment is given in \cite{etingof2016tensor}.
\subsection{Operadic Lie Algebras}
Recall that a Lie algebra over $\KK$ is a vector space $\g$ endowed with a $\KK$-bilinear map $\beta: \g \times \g \rightarrow \g$ which is anti-symmetric (assuming $\mathrm{char} \ \KK \neq 2$) and satisfies the Jacobi identity. This can be phrased categorically as follows. The category $\Vecc_\KK$ of vector spaces over $\KK$ is a symmetric tensor category endowed with the usual braiding $c_{X, Y}: X \otimes Y \rightarrow Y \otimes X$ given by interchanging $X$ and $Y$, a natural isomorphism in objects $X$ and $Y$. Then, a Lie algebra (in the category $\Vecc_\KK$) is an object $\g$ equipped with a morphism $\beta: \g\otimes \g \rightarrow \g$ such that the following relations of morphisms hold:

\begin{align*}
    \beta \circ (1_{\g \otimes \g} + c_{\g, \g}) = 0, \\
    \beta \circ (\beta \otimes 1_\g) \circ (1_{\g^{\otimes 3}} + (123)_{\g^{ \otimes 3}} + (132)_{\g^{\otimes 3}}) = 0,
\end{align*}
where the permutation $(123)_{\g^{ \otimes 3}}: \g^{\otimes 3} \rightarrow \g^{\otimes 3}$ is given by \[(123)_{\g^{ \otimes 3}} \coloneqq (1_{\g} \otimes c_{\g \otimes \g}) \circ (c_{\g \otimes \g}\otimes 1_{\g}),\] 
and the permutation $(132)_{\g^{\otimes 3}}: \g^{\otimes 3} \rightarrow \g^{\otimes 3}$ is given by 

\[(132)_{\g^{\otimes 3}}\coloneqq (c_{\g \otimes \g}\otimes 1_{\g}) \circ (1_{\g} \otimes c_{\g \otimes \g}).\] 
Here, we ignore the associativity morphisms. The first relation corresponds to the anti-symmetry condition, and the second is the Jacobi identity. Using these as defining axioms, we can extend the definition to any symmetric tensor category $\mathcal{C}$ with braiding $c$, and call the pair $(\g, \beta)$ an operadic Lie algebra in $\mathcal{C}$. We can also allow $\g$ to be an ind-object in the category. For the category we will consider, ind-objects will be the potentially-infinite direct sums of simple objects in the category.
\par
Recall that the category of super vector spaces $\sVec_\KK$ consists of $\Z/2\Z$-graded vector spaces and morphisms. In particular, we write a super vector space $V$ as  $V = V_{\ov{0}}\oplus V_{\ov{1}}$, and let $\sdim V = (\dim V_{\ov{0}} | \dim V_{\ov{1}})$. Here $\ov{0},\ov{1} \in \Z/2\Z$ and distinguish the even and odd subspaces respectively.
This category has a braiding $c_{X, Y}$ given by the Koszul sign rule:
\begin{equation}\label{svecrule}
  c_{X,Y}(x\otimes y) = (-1)^{|x||y|}(y \otimes x),
\end{equation}
where $x, y$ are homogeneous (i.e. purely even or purely odd). We call a Lie algebra in the category of super vector spaces $\sVec_\KK$ a \textit{Lie superalgebra}. 
\begin{rem}\label{liesupalgrem}
  One should note that in characteristic $3$, the usual definition of a Lie superalgebra and the definition given above do not coincide, as for any odd element $x$, the relation $[x, [x, x]] = 0$ required for a Lie superalgebra does not follow from the Jacobi identity. Without this relation imposed, one has the notion of a \textit{weak Lie superalgebra}, and the ideal generated by this relation is actually just the linear span of elements of the form $[x,[x,x]]$ for odd $x$. However, this will not be a major concern for our considerations.
\end{rem}
In a symmetric tensor category in general we do not have the notion of elements or vectors in an object. However, for the purposes of this paper, we will be working with Lie algebras in the representation category of a certain finite group scheme, so we can refer to vectors in objects of this category by applying the forgetful functor into $\Vecc_\KK$. Furthermore, using vectors will make it easier to talk about and describe the bracket.
\par
For more details on operadic Lie algebras, one can consult \cite{etingof2018koszul}.

\subsection{The Verlinde Category}\label{Verp}
Our goal is to now describe a symmetric tensor category whose semisimplification contains the category of super vector spaces. This way, we can start with a Lie algebra in that symmetric tensor category and semisimplify to get a Lie algebra in the semisimplification; projecting onto $\sVec_\KK$ will give us a Lie superalgebra.
\par
Let $\bal_p$ denote the kernel of the Frobenius endomorphism on the additive group scheme $\mathbb{G}_a$ over an algebraically closed field $\KK$ of characteristic $p > 0$. Its coordinate ring $\KK\bal_p$ is $\KK[t]/(t^p)$, which is a cocommutative Hopf algebra with comultiplication defined by letting $t$ be primitive (this only works in characteristic $p$). The dual space $\KK\bal_p^*$ of $\KK\bal_p$ has basis given by $\{f_0, f_1, \dots, f_{p-1}\}$, where $f_i(t^k) = \delta_{ik} i!$. The comultiplication on $\KK\bal_p$ gives a multiplication on $\KK\bal_p^*$ where $f_0$ is the identity and $f_{i}f_{j} = f_{i+j}$ (let $f_{i} = 0$ for $i \geq p$). Therefore, as algebras, $\KK\bal_p$ and $\KK\bal_p^*$ are isomorphic under the map $t^i \mapsto f_i$. Because modules over the affine group scheme $\bal_p$ are determined by $\KK\bal_p$-comodules, which themselves are $\KK\bal_p^*$-modules, we will describe objects in the representation category $\Rep \bal_p$ of $\bal_p$ as finite-dimensional $\KK[t]/(t^p)$-modules. For the remainder of this text, the symbol $t$ will be used to refer to the corresponding element of $\KK[t]/(t^p)$.
\par
The category $\Rep \bal_p$ is a symmetric tensor category with braiding given by the usual braiding of vector spaces (there is a forgetful functor from $\Rep \bal_p$ to $\Vecc_\KK)$. Hence, an example of a Lie algebra $(\g, \beta)$ in $\Rep \bal_p$ is a Lie algebra in $\Vecc_\KK$ equipped with a nilpotent element $x \in \g$ of order at most $p$; then $\g$ is a $\KK\bal_p$-module by letting $t$ act as $\mathrm{ad} \ x$, and $\beta$ is naturally a morphism in $\Rep \bal_p$ by the Jacobi identity (as a Lie algebra in $\Vecc_\KK$). More generally, we can take $t$ to be any nilpotent derivation of order at most $p$ (not necessarily inner).
\par
The category $\Rep \bal_p$ is not semisimple; indeed, it contains non-simple indecomposable objects. The pairwise non-isomorphic indecomposable objects are given by the modules $J_n = \KK^n$ where $t$ acts as the nilpotent Jordan block of size $n$ ($1 \leq n \leq p$). If $v_1, v_2, \dots, v_n$ is a basis of $J_n$ such that $t\cdot v_i = v_{i+1}$, we will use the notation

\[v_1 \mapsto v_2 \mapsto \cdots \mapsto v_n\]
to refer to that particular object $J_n$.
\par
The semisimplification of this category is by definition the Verlinde category $\Ver_p$. Formally speaking, this is the symmetric tensor category obtained by quotienting out by the tensor ideal of \textit{negligible morphisms}, which are morphisms $f: V \rightarrow W$ such that for all morphisms $g: W \rightarrow V$, the trace of the composition $f \circ g$ is zero. Intuitively, the effect of this is forcing Schur's lemma to hold. In other words, the semisimplification of a symmetric tensor category is the symmetric tensor category obtained by declaring all indecomposable objects to be simple, except those whose categorical dimension is zero, which are sent to zero. We then define the tensor product the same way (for more details on semisimplification, see \cite{etingof2019semisimplification}). The semisimplification is a semisimple symmetric tensor category by construction. There is a semisimplification functor from a symmetric tensor category $\mathcal{C}$ to its semisimplification $\ov{\mathcal{C}}$, and it is symmetric and monoidal. We will denote the images of objects under this functor with an overline over the original object. 
\par
Therefore, the simple objects in $\Ver_p$ are $L_1, \dots, L_{p-1}$, which are the images of $J_1, \dots, J_{p-1}$ under the semisimplification functor, respectively, i.e. $L_i = \ov{J_i}$. If $v_1 \mapsto v_2 \mapsto \cdots \mapsto v_i$ denotes a $J_i$, we will refer to the corresponding copy of $L_i$ by $\ov{v_1\mapsto v_2 \mapsto \cdots \mapsto v_i}$ (for $i < p$). Note that $J_p$ is sent to the zero object as it is $p$-dimensional, so its categorical dimension is $0$. In terms of negligible morphisms, this is because any sequence of morphisms $J_i \rightarrow J_p \rightarrow J_i$ and $J_p \rightarrow J_i \rightarrow J_p$ for any $i$ has trace zero, so in the semisimplification there are no nonzero morphisms in or out of the image of $J_p$, meaning its image is zero. It is well known that the tensor product is given by the truncated Clebsch-Gordan rule (cf. \cite{ostrik2015symmetric}), which is similar to the usual Clebsch-Gordan rule of $\mathfrak{sl}_2(\C)$-modules (the truncation comes from the terms in bold):

\begin{equation}\label{clebschgordan}
  L_m \otimes L_n = \bigoplus_{i=1}^{\min(m,n,\boldsymbol{p-m,p-n})} L_{|m-n| + 2i - 1}.
\end{equation}
In particular, $\mathbbm{1} \coloneqq L_1$ is the unit object with respect to tensor product. More importantly, we have the following proposition:

\begin{prop}\label{svecc}
The category $\sVec_\KK$ is symmetric tensor equivalent to the subcategory generated by the objects $L_1$ and $L_{p-1}$ in $\Ver_p$.
\end{prop}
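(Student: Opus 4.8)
The plan is to produce a symmetric tensor equivalence between $\sVec_\KK$ and the full tensor subcategory $\mathcal S\subseteq\Ver_p$ whose objects are the finite direct sums of copies of $L_1=\mathbbm 1$ and $L_{p-1}$, under which the even line corresponds to $L_1$ and the odd line to $L_{p-1}$. The first step is to note that the truncated Clebsch--Gordan rule gives $L_{p-1}\otimes L_{p-1}=L_1=\mathbbm 1$, since $\min\bigl(p-1,\,p-1,\,p-(p-1),\,p-(p-1)\bigr)=1$. Thus $L_{p-1}$ is invertible and self-dual, of order exactly $2$ (order $2$ and not $1$ because $p>2$, so $L_{p-1}\not\cong\mathbbm 1$), and consequently $\mathcal S$ is closed under tensor products and duals: it is a symmetric fusion subcategory of $\Ver_p$ which is pointed, with group of invertible objects $\Z/2\Z$.

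Next I would invoke the classification of symmetric pointed fusion categories over an algebraically closed field of characteristic $\ne 2$: such a category with group $\Z/2\Z$ has trivializable associativity constraint (equivalently, the category of $\Z/2\Z$-graded vector spaces with the nontrivial associator carries no symmetric braiding), and is therefore symmetric tensor equivalent to exactly one of $\Rep(\Z/2\Z)$ with trivial braiding or $\sVec_\KK$ with the Koszul sign braiding. The two are distinguished by the self-braiding of the non-unit invertible object: $c_{L_{p-1},L_{p-1}}\in\mathrm{Aut}(L_{p-1}\otimes L_{p-1})=\KK^\times$ is a scalar of square $1$, equal to $+1$ in the $\Rep(\Z/2\Z)$ case and to $-1$ in the $\sVec_\KK$ case. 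So it remains to evaluate this scalar.

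To do so I would compute categorical dimensions. The semisimplification functor $\Rep\bal_p\to\Ver_p$ and the forgetful functor $\Rep\bal_p\to\Vecc_\KK$ are symmetric monoidal and hence preserve categorical dimensions, so $\dim L_{p-1}=\dim_{\Rep\bal_p}J_{p-1}=(\dim_\KK J_{p-1})\cdot 1_\KK=(p-1)\cdot 1_\KK=-1_\KK$, using $\mathrm{char}\,\KK=p$. For any invertible object $g$ of order $2$ in a symmetric tensor category one has $c_{g,g}=(\dim g)\cdot\mathrm{id}_{g\otimes g}$: indeed $c_{g,g}=\lambda\cdot\mathrm{id}$ for a scalar $\lambda$ with $\lambda^2=1$, and the standard identity $\mathrm{tr}_{X\otimes X}(c_{X,X})=\dim X$ applied to $X=g$ gives $\lambda=\lambda\cdot\dim(g\otimes g)=\dim g$. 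With $g=L_{p-1}$ this yields $c_{L_{p-1},L_{p-1}}=-\mathrm{id}$, so $\mathcal S\simeq\sVec_\KK$. (Alternatively one can avoid this lemma and compute the braiding by hand, lifting to $\Rep\bal_p$ where it is the ordinary flip: the element $\omega=\sum_{i=1}^{p-1}(-1)^i\,v_i\otimes v_{p-i}\in J_{p-1}\otimes J_{p-1}$ satisfies $t\omega=0$, spans the unique non-negligible indecomposable summand $\mathbbm 1$, and is carried by the flip to $(-1)^p\omega=-\omega$ because $p$ is odd; hence $c_{L_{p-1},L_{p-1}}=-\mathrm{id}$ after semisimplification.)

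The hard part is precisely deciding that we land in the $\sVec_\KK$ case and not the $\Rep(\Z/2\Z)$ case, i.e.\ controlling the self-braiding of $L_{p-1}$. In the argument above this is condensed into the dimension computation, which is clean but depends on the auxiliary fact $c_{g,g}=(\dim g)\,\mathrm{id}$ for order-$2$ invertible objects (standard, but to be proved or cited). If one prefers the explicit flip computation, the only genuinely substantive point --- and the only place where the particular structure of $\Rep\bal_p$ is used --- is verifying that $\omega$ is non-negligible, equivalently that the copy of $\mathbbm 1$ in $L_{p-1}\otimes L_{p-1}$ lies in $\wedge^2 L_{p-1}$ rather than $\mathrm{Sym}^2 L_{p-1}$; this needs a handle on the indecomposable summands of the $\KK\bal_p$-module $J_{p-1}\otimes J_{p-1}$ (e.g.\ that it has a single non-negligible summand, locatable via the grading on $J_{p-1}$ for which $t$ acts in degree $2$, coming from its would-be $\sll$-module structure). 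The remaining point, the trivializability of the associator of $\mathcal S$, is routine and is already subsumed in the classification cited above.
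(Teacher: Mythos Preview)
Your argument is correct and is a genuinely different, more categorical route than the paper's. The paper explicitly chooses to give ``a direct, linear algebraic proof'': it writes down the decomposition $J_{p-1}\otimes J_{p-1}\cong J_1\oplus (p-2)J_p$ in $\Rep\bal_p$, identifies the $J_1$ summand as the span of $\omega=\sum_{i=1}^{p-1}(-1)^iv_i\otimes v_{p-i}$, verifies via a degree argument that this summand is split (hence non-negligible), and then observes that the flip sends $\omega$ to $-\omega$ since $p$ is odd. This is exactly the computation you sketch in your parenthetical alternative, with the splitness of the $J_1$ (the point you flag as the substantive one) handled explicitly.

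Your primary argument via the classification of symmetric pointed fusion categories with group $\Z/2\Z$, together with the dimension identity $c_{g,g}=(\dim g)\,\mathrm{id}$ for order-two invertibles, is slicker and avoids any analysis of $J_{p-1}\otimes J_{p-1}$; it trades explicit linear algebra for standard categorical machinery. One advantage of the paper's explicit approach, however, is that its computation is reused later: Proposition~\ref{strucconsts}(2), which gives the structure constants for the bracket of two odd elements in the semisimplified Lie superalgebra, is proved by citing the proof of this proposition. Your abstract argument would not directly supply those structure constants, so if you adopted it you would still need the explicit identification of the $L_1\subset L_{p-1}\otimes L_{p-1}$ at that point.
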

\begin{proof} This follows from results in \cite{ostrik2015symmetric}, but we offer a direct, linear-algebraic proof. We have $L_{p-1} \otimes L_{p-1} = L_1$ by the truncated Clebsch-Gordan rule \eqref{clebschgordan}. Therefore, one needs to check that the induced braiding on $\Ver_p$ from $\Rep \bal_p$ under the semisimplification functor  restricts to this subcategory appropriately. Let us use $c$ to denote both the braiding and its image.
\par
In $\Rep \bal_p$,  let $J_{p-1}$ be given by the basis $v_1 \mapsto v_2 \mapsto \dots \mapsto v_{p-1}$. It is known that in $\Rep \bal_p$, 

\[J_{p-1} \otimes J_{p-1} = J_1 \oplus (p-2)J_p\]
 (cf. \cite{green1962modular}). One such decomposition is as follows. For each $1 \leq i \leq p-2$, a copy of $J_{p}$ arises from the submodule generated by the vector $v_{i}\otimes v_{1}$. It is clear that $ t^p\cdot(v_{i}\otimes v_1) = 0$ but $t^{p-1}\cdot(v_{i} \otimes v_1)$ is nonzero, so these do indeed give a copy of $J_p$. Since $t^p = 0$, these copies of $J_p$ are direct summands in $J_{p-1} \otimes J_{p-1}$. The copy of $J_1$ arises as the span of the vector $\sum_{i=1}^{p-1} (-1)^{i} v_i \otimes v_{p-i}$. To see that this copy of $J_1$ is the last direct summand, let us define the degree of $v_{i} \otimes v_{j}$ to be $i+j$. This gives a grading on $J_{p-1} \otimes J_{p-1}$, where the action of $t$ increases the grading by $1$, and the copies of $J_p$ and $J_1$ above are graded submodules. Suppose the copy of $J_1$ intersected the span of the copies of $J_p$, meaning it lies in the graded subspace spanned by $t^{p-1}\cdot(v_{i} \otimes v_1)$ for $1 \leq i \leq p-2$. The vector $\sum_{i=1}^{p-1} (-1)^{i} v_i \otimes v_{p-i}$ has degree $p$, but homogeneous vectors in this graded subspace all have degree greater than or equal to $p+1$. From this contradiction, we deduce that $\sum_{i=1}^{p-1} (-1)^{i} v_i \otimes v_{p-i}$ spans a direct summand.
\par
The copies of $J_p$ vanish in the semisimplification, so let's just look at the copy of $J_1$ spanned by the vector $\sum_{i=1}^{p-1} (-1)^{i} v_i \otimes v_{p-i}$. Since $p$ is odd, this sum has an even number of terms, and the braiding map 

\[c_{J_{p-1}, J_{p-1}}: J_{p-1}\otimes J_{p-1} \rightarrow J_{p-1}\otimes J_{p-1}\] 
will multiply this element by $-1$. Therefore, in the semisimplification, we have $L_{p-1} \otimes L_{p-1} = L_1$, and the image of the braiding $c_{J_{p-1}, J_{p-1}}$ under the semisimplification functor is

\[c_{L_{p-1}, L_{p-1}}: L_{p-1}\otimes L_{p-1} \rightarrow L_{p-1}\otimes L_{p-1}\] 
and given by multiplication by $-1$. Then, it is clear that image of $c$ under the semisimplification functor restricted to the subcategory generated by $L_1$ and $L_{p-1}$ behaves as described in equation \eqref{svecrule}.
\end{proof}
From now on we will refer to the full subcategory generated by $L_1$ and $L_{p-1}$ as $\sVec_\KK$.  This means that the sum of the isotypic components of $L_1$ and $L_{p-1}$ of any object $X$ in $\Ver_p$ is a super vector space, and we will refer to this subobject of $X$ as the \textit{projection} of $X$ onto $\sVec_\KK$. Similarly, we can project morphisms onto $\sVec_\KK$. By functoriality, if we are given a Lie algebra $(\mathfrak{g}, \beta)$ in $\Rep \bal_p$, the projection $(\ov\g, \ov{\beta})$ of its semisimplification in $\Ver_p$ onto $\sVec_\KK$ is a Lie algebra in $\sVec_\KK$, which is a Lie superalgebra. Therefore, over fields of characteristic $p$, we can produce Lie superalgebras from Lie algebras by specifying a nilpotent element of order at most $p$.

\begin{rem}
  As noted in Remark \ref{liesupalgrem}, to get a Lie superalgebra in characteristic $3$, one must quotient out by the further relation $[x, [x,x]] = 0$ for odd $x$. For instance, consider the free Lie algebra $\g$ on $x,y$ modulo the elements of degree $4$; this is a Lie algebra with basis $\{x, y, [x, y], [x,[x,y]], [y,[y,x]]\}$. Define the derivation $d$ by $d(x) = y$ and $d(y) = 0$. Then, $\g$ can be realized as an object in $\Rep \bal_3$ with respect to $d$, where we have a copy of $J_1$ for $[x,y]$ and copies of $J_2$ given by $x \mapsto y$ and $[x,[x,y]] \mapsto -[y,[y,x]]$. Semisimplifying gives an operadic Lie superalgebra with odd generator $z$ and basis $\{z, [z,z], [z,[z,z]]\}$. The even part is $1$-dimensional and the odd part is $2$-dimensional.
\end{rem}

\begin{rem}
The setup above explains how we construct Lie superalgebras by semisimplifying a Lie algebra in $\Rep \bal_p$. However, the above discussion excludes characteristic $2$, as $\Ver_2$ is just $\Vecc_\KK$. There is a procedure by which one can take a simple Lie algebra and produce a simple Lie superalgebra, but this is a completely different approach from semisimplification. This procedure is described in \cite{leitesLieSupalgchar2} and reduces the classification simple Lie superalgebras in characteristic $2$ to that of simple Lie algebras.
\end{rem}

\subsection{Semisimplifications of Non-Exceptional Classical Lie Algebras}\label{ssclassical}
In this section, we discuss semisimplifications of non-exceptional  classical (or otherwise known as serial) Lie algebras. These semisimplifications are previously known. For instance, computing them is left as an exercise to the reader in Chapter $9$ of \cite{etingof2016tensor}. 
\par
Now, if $V$ is an object in $\Rep \bal_p$, then $\ov{\gl(V)} = \gl(\ov{V})$. This is because the semisimplification functor is a symmetric monoidal functor that preserves duals, meaning 

\[\ov{\gl(V)} = \ov{V \otimes V^*} = \ov{V}\otimes \ov{V}^* = \gl(\ov{V}).\]
Furthermore, the bracket $\beta: \gl(V) \otimes \gl(V) \rightarrow \gl(V)$ given by

\[\beta = 1_V \otimes ev_{V^*,V} \otimes 1_{V^*} \circ (1_{\gl(V)\otimes \gl(V)} - c_{\gl(V), \gl(V)})\] semisimplifies to the bracket desired:

\[\ov{\beta} = 1_{\ov{V}} \otimes ev_{\ov{V}^*,\ov{V}} \otimes 1_{\ov{V}^*} \circ (1_{\gl(\ov{V})\otimes \gl(\ov{V})} - c_{\gl(V), \gl(V)}),\]
where $ev_{W^*,W}: W^* \otimes W \rightarrow \KK$ denotes the evaluation morphism on an object $W$ by its dual $W^*$. In particular, if we start with $V = mJ_1 \oplus nJ_{p-1} \oplus lJ_p$, then $\gl(V) = \gl_{m + n(p-1) + lp}$, and the semisimplification is $\gl(\ov{V}) = \gl_{m|n}$.
\par
When $p > 2$, a similar statement holds for the symplectic and orthogonal Lie algebras, which is the consequence of a more general setup.  Suppose $V$ is an object in $\Rep \bal_p$ and $\gamma$ is a non-degenerate bilinear form on $V$ in $\Rep \bal_p$. In particular, this means we can view $\gamma$ as an isomorphism $V \rightarrow V^*$ in $\Rep \bal_p$. This gives us a map $\phi: \gl(V) \rightarrow \gl(V)$ given by 

\[\phi = (1_{V}\otimes \gamma^{-1}) \circ (1_{V\otimes V} + c_{V \otimes V})\circ (1_{V}\otimes \gamma).\] 
Since the outside morphisms are isomorphisms and the middle morphism is twice a projector, the kernel of $\phi$ is a direct summand of $\gl(V)$. The kernel of the middle map is $\bigwedge^2(V)$, and we can identify this kernel with the kernel of $\phi$. On the other hand, the kernel of $\phi$ is by definition the Lie algebra that preserves the form, so via the form $\gamma$, we can say $\bigwedge^2(V)$ is the Lie subalgebra of $\gl(V)$ that preserves $\gamma$.
\par
When we semisimplify, $\ov{\gamma}$ is a non-degenerate bilinear form on $\ov{V}$, and $\ov{\bigwedge^2(V)} = \bigwedge^2{(\ov{V})}$ (because it is the degree $2$ piece of the exterior algebra; for degrees greater than or equal to the characteristic, this may not necessarily be true). We deduce that semisimplification of the Lie algebra in $\Rep \bal_p$ preserving a form is the Lie algebra in $\Ver_p$ that preserves the semisimplification of the form.
\par
For an explicit construction of $\mathfrak{osp}_{m|2n}$, we can start with an $m$-dimensional vector space $V_0$ on which $\KK[t]/(t^p)$ acts trivially. Fix a non-degenerate symmetric bilinear form $B_0$ on $V_0$. Because the $t$-action on $V_0$ is trivial, it is immediate that $B_0: V_0 \otimes V_0 \rightarrow \mathbb{K}$ is a morphism in $\Rep \bal_p$. Then, let $V_1$ be a $2n(p-1)$-dimensional vector space with ordered basis 

\[\{w_1^1, w_2^1, \dots, w_{p-1}^1\} \cup \cdots \cup \{w_{1}^{2n}, w_{2}^{2n}, \dots, w_{p-1}^{2n}\}.\] 
We realize $V_1 = 2nJ_{p-1}$ as an object in $\Rep \bal_p$ by $t\cdot w_i^j = w_{i+1}^j$ and $t\cdot w_{p-1}^j = 0$  for $1 \leq i \leq p-2$ and  $1 \leq j \leq 2n$. Define a non-degenerate symmetric bilinear form $B_1: V_1 \otimes V_1 \rightarrow \KK$ with respect to the given basis by the $n \times n$ block-diagonal matrix

\[
 B_1 = 
 \begin{bmatrix}
  R & & \\
  & R & & \\
  & & \ddots & \\
  & & & R
 \end{bmatrix},
\]
where $R$ is the following block matrix:

\[ 
R = 
\begin{bmatrix}
  0 & S \\
  -S & 0
\end{bmatrix}.
\]
Here, $S$ is a $(p-1) \times (p-1)$ anti-diagonal matrix with alternating entries $1, -1, 1, \dots, -1$:

\[
S = 
\begin{bmatrix}
  & & & & -1 \\
  & & & 1 & \\
  & & \iddots & & \\
  & -1 & & & \\
  1 & & & &
\end{bmatrix}.
\]
Notice that $p-1$ is necessarily even and so $S^T = -S$.  It follows that $B_1$ is a non-degenerate symmetric matrix, and it is defined this way so that $B_1: V_1 \otimes V_1 \rightarrow \KK$ is a morphism in $\Rep \bal_p$. Then, the Lie algebra in $\Rep \bal_p$ preserving the non-degenerate symmetric bilinear form $B = B_0 \oplus B_1$ on $V = V_0 \oplus V_1$ is $\mathfrak{o}_{m + 2n(p-1)}$. Its semisimplification in $\Ver_p$ is $\mathfrak{osp}_{m|2n}$.
\par
Similarly, we can get $\mathfrak{osp}_{n|2m}$ by semisimplifying $\mathfrak{sp}_{2m + n(p-1)}$. We have a similar setup except we make the following modifications. Now take $V_0$ to be $2m$-dimensional (still with the trivial action of $t$) with non-degenerate alternating form $B_0$. Furthermore, $V_1 = nJ_{p-1}$ is $n(p-1)$-dimensional with ordered basis as before, except we only take the first $n(p-1)$ basis vectors. With respect to this basis we can define a non-degenerate alternating form $B_1: V_1 \otimes V_1 \rightarrow \KK$ given by $B_1 = diag(S, S, \dots, S)$ (there are $n$ blocks on the diagonal). Then, the Lie algebra in $\Rep \bal_p$ preserving the non-degenerate alternating bilinear form $B = B_0 \oplus B_1$ on $V = V_0 \oplus V_1$ is $\mathfrak{sp}_{2m + n(p-1)}$. Its semisimplification in $\Ver_p$ is $\mathfrak{osp}_{n|2m}$.
\par
In the preceding discussion about orthosymplectic Lie superalgebras, $B_1$ could have been any symmetric (in the first scenario) or alternating (in the second scenario) non-degenerate bilinear form on $V_1$ in $\Rep \bal_p$, but the point is to show that such a form exists.

\subsection{Explicit Description of Semisimplification in Characteristic $3$}\label{explicit}
The language of symmetric tensor categories is naturally suited for talking about semisimplification. However, since it is relatively new, we offer an explicit description of what happens to the Lie bracket under semisimplification using linear algebra. We will use this language in our proofs below.
\par
Let $\g$ be a Lie algebra in $\Rep \bal_3$ with respect to some derivation $d$ such that $d^3 = 0$. We can pick a non-canonical decomposition $\g = n_1J_1 \oplus n_2J_2 \oplus n_3J_3$. For each copy of $J_1$, we pick a basis $x_i$, where $1 \leq i \leq n_1$. For each copy of $J_2$, we pick a basis $x_i \mapsto x_i'$, where $1 + n_1 \leq i \leq n_1 + n_2$. Finally, for each copy of $J_3$, we pick a basis $x_i \mapsto x_i' \mapsto x_i''$ for $1 + n_1 + n_2 \leq i \leq n_1 + n_2 + n_3$. The collection $\{x_i,x_j,x_j',x_k,x_k', x_k''\}$ for $1 \leq i \leq n_1$, $1 +n_1 \leq j \leq n_1 + n_2$ , $1+n_2+n_3 \leq k \leq n_1+n_2+n_3$  is a basis of $\g$.
\par
After semisimplification, the $J_3$ terms vanish, and the $J_1$ terms and $J_2$ terms give $L_1$ terms and $L_2$ terms, respectively, which collectively give rise to a basis of the Lie superalgebra $\ov{\g}$. In particular, for $1 \leq i \leq n_1$, the copy of $J_1$ corresponding to $x_i$ gives an even basis vector $y_i$ in $\ov{\g}$ which spans the subspace $\ov{x_i}$. Here, by abuse of notation, $x_i$ refers to the basis vector we picked above and the $J_1$ it spans, so that the notation $\ov{x_1}$ makes sense. For $n_1 + 1 \leq i \leq n_1 + n_2$, the copy of $J_2$ corresponding to $x_i \mapsto x_i'$ gives an odd basis vector $y_i$ in $\ov{\g}$ which spans the subspace $\ov{x_i\mapsto x_i'}$. Later in the text, we will simply write $\ov{x_i}$ or $\ov{x_i \mapsto x_i'}$ in place of $y_i$, even though these are subspaces of $\ov{\g}$ (this is another abuse of notation). For instance, if $n_1 = 2$, then the bracket $[y_1, y_2]$ will be written as $[\ov{x_1}, \ov{x_2}]$ and the bracket $[y_2, y_3]$ will be written as $[\ov{x_2}, \ov{x_3 \mapsto x_3'}]$. Lastly, if $v = \sum_{i=1}^{n_1} a_ix_i$ for suitable $a_i$, then the notation $\ov{v}$ is defined to mean $\sum_{i=1}^{n_1} a_i \ov{x_i}$.
\par
Now, we will describe the structure constants of this basis of $\ov{\g}$.
\begin{prop}\label{strucconsts}
  Let $C_{ij}^k$ denote the structure constants of the basis $\{y_i\}$ of $\ov{\g}$ above, i.e. $[y_i, y_j] = \sum_k C_{ij}^ky_k$. Then,

  \begin{enumerate}
    \item if $i,j,k \leq n_1$, or $1 \leq i \leq n_1$ and $j,k > n_1$, or $j < n_1$ and $i,k > n_1$, then $C_{ij}^k$ is the coefficient of $x_k$ in $[x_i, x_j]$;
    \item if $i,j > n_1$ and $k \leq n_1$, then $C_{ij}^k$ is the coefficient of $x_k$ in $-[x_i, x_j'] + [x_i', x_j]$;
    \item in all other cases, $C_{ij}^k = 0$.
  \end{enumerate}
\end{prop}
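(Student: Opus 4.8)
The plan is to compute $\ov B = S(B)$ directly, where $S\colon \Rep\bal_3 \to \Ver_3$ is the semisimplification functor, by breaking $B$ into blocks relative to the fixed decomposition $\g = n_1 J_1 \oplus n_2 J_2 \oplus n_3 J_3$. Concretely, for an index $i$ as in the set-up write $J_{a(i)}$ for the summand containing $x_i$ (so $a(i)\in\{1,2,3\}$), and for a triple $i,j,k$ let $\beta_{ijk}\colon J_{a(i)}\otimes J_{a(j)}\to J_{a(k)}$ be the morphism that first includes $J_{a(i)}\otimes J_{a(j)}$ as the $(i,j)$-th summand of $\g\otimes\g$, then applies $B$, then projects onto the $k$-th summand of $\g$. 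Since $S$ is $\KK$-linear, additive and symmetric monoidal, $\ov B$ is obtained by applying $S$ blockwise (the blocks involving a $J_3$ being killed because $S(J_3)=0$), so $C(i,j,k)$ is precisely the scalar by which $S(\beta_{ijk})\colon L_{a(i)}\otimes L_{a(j)}\to L_{a(k)}$ acts, using $S(J_m)=L_m$ and $\End(L_1)=\End(L_2)=\KK$.

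The first thing I would record is which blocks vanish. By the truncated Clebsch--Gordan rule, $L_1\otimes L_1=L_1$, $L_1\otimes L_2=L_2\otimes L_1=L_2$, and $L_2\otimes L_2=L_1$; since $L_1,L_2$ are non-isomorphic simple objects, $\Hom_{\Ver_3}(L_{a(i)}\otimes L_{a(j)},L_{a(k)})$ is one-dimensional when $L_{a(i)}\otimes L_{a(j)}\cong L_{a(k)}$ and is zero otherwise. Hence $S(\beta_{ijk})=0$, so $C(i,j,k)=0$, whenever $a(k)=3$ (zero target) or $(a(i),a(j),a(k))\in\{(1,1,2),(1,2,1),(2,1,1),(2,2,2)\}$ — exactly the configurations left over in part~(3).

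Next I would pin down the surviving blocks one family at a time. When $a(i)=a(j)=a(k)=1$, $\beta_{ijk}\colon J_1\otimes J_1=J_1\to J_1$ is multiplication by the coefficient of $x_k$ in $[x_i,x_j]$ and $S$ is an isomorphism on $\Hom(J_1,J_1)$; this is the first case of part~(1). When exactly one of $a(i),a(j)$ is $1$ and $a(k)=2$, then $J_{a(i)}\otimes J_{a(j)}\cong J_2$ with obvious generator $x_i\otimes x_j$; a direct check (using $[tx,y]=t[x,y]-[x,ty]$ and that $t$ kills the $J_1$-factor) shows $\beta_{ijk}$ sends that generator to the $J_{a(k)}$-component of $[x_i,x_j]$, say $\alpha x_k+\beta x_k'$ with $\alpha,\beta$ the coefficients of $x_k,x_k'$ in $[x_i,x_j]$, and as a map $J_2\to J_2$ this is $\alpha\,\mathrm{id}+\beta\,t$; since $t^2=0$ on $J_2$, the element $S(t)\in\End(L_2)=\KK$ is nilpotent, hence $0$, so $S(\beta_{ijk})=\alpha\,\mathrm{id}_{L_2}$ and we get the remaining cases of part~(1). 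When $a(i)=a(j)=2$ and $a(k)=1$, I would use the explicit decomposition from the proof of \cref{svecc}: in $J_2\otimes J_2=J_1\oplus J_3$ the $J_1$-summand is spanned by $x_i'\otimes x_j-x_i\otimes x_j'$ (the $p=3$ instance of $\sum_{\ell}(-1)^{\ell}v_{\ell}\otimes v_{p-\ell}$ there), while the $J_3$-summand is annihilated by $S$; so $S(\beta_{ijk})$ is multiplication by the coefficient of $x_k$ in $[x_i',x_j]-[x_i,x_j']=-[x_i,x_j']+[x_i',x_j]$, which is part~(2).

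The part needing genuine care — and the only real obstacle — is the sign bookkeeping: the normalization of the basis vector $y_k$ of $L_{a(k)}=\ov{J_{a(k)}}$ and, relatedly, of the monoidal isomorphism $L_2\otimes L_2\cong L_1$. One has to fix the equivalence $\sVec_\KK\simeq\langle L_1,L_2\rangle\subset\Ver_3$ of \cref{svecc} so that this isomorphism is the one carried by the summand $x_i'\otimes x_j-x_i\otimes x_j'$ and not its negative; that choice is what forces the minus sign in part~(2), and the (sign-free) analogues of the same diagram chase settle part~(1). Once the convention is fixed, each block computation is short, and assembling the three families yields the stated structure constants.
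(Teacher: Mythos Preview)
Your proposal is correct and follows essentially the same route as the paper: the paper's proof is extremely terse, saying only that parts~(1) and~(3) are ``easy'' and that part~(2) ``is a consequence of the proof of Proposition~\ref{svecc}'', and you have written out precisely the blockwise computation and the identification of the $J_1$-summand of $J_2\otimes J_2$ (as $x_i'\otimes x_j - x_i\otimes x_j'$, the $p=3$ case of $\sum_\ell(-1)^\ell v_\ell\otimes v_{p-\ell}$) that the paper leaves implicit. Your extra care with the normalization of $L_2\otimes L_2\cong L_1$ and the observation that $S(t)=0$ on $\End(L_2)$ for the mixed case of part~(1) are details the paper does not spell out but which underlie its ``easy''.
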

\begin{proof}
  Proof of $1)$ and $3)$ follow easily from the definition of $\{y_i\}$. The proof of $2)$ is a consequence of the proof of Proposition \ref{svecc}.
\end{proof}
Let's do this calculation explicitly for $\g = \mathfrak{gl}_3$. Let $e_{ij}$ refer to the elementary matrix with a $1$ in the $(i,j)$ entry and zero elsewhere. Then, with respect to the adjoint action of $e_{23}$, $\g$ is an object in $\Rep \bal_3$. We choose the following decomposition: $e_{11}$, $e_{11} + e_{22} + e_{33}$ as copies of $J_1$; $e_{12} \mapsto -e_{13}$ and $e_{31} \mapsto e_{21}$ as copies of $J_2$; and $e_{32} \mapsto e_{22} - e_{33} \mapsto e_{23}$ as a copy of $J_3$. Then, the basis vectors of $\ov{\g}$ are 

\begin{align*}
  &y_1 = \ov{e_{11}} &y_2 = \ov{e_{11} + e_{22} + e_{33}} \\
  &y_3 = \ov{e_{12} \mapsto -e_{13}} &y_4 = \ov{e_{31} \mapsto e_{21}}.
\end{align*}
\par
Applying the formulas in Proposition \ref{strucconsts}, we have: $[y_1, y_2] = 0$, $[y_1, y_3] = y_3$, and $[y_1, y_4] = -y_4$. We can also compute $[y_3, y_4]$. Both of these vectors are odd, so we look at

\[-[e_{12}, e_{21}] + [-e_{13}, e_{31}] = (e_{22} - e_{11}) - (e_{11} - e_{33}) = e_{11} + e_{22} + e_{33},\]
so $[y_3, y_4] = y_2$. Therefore, the resulting Lie superalgebra is $\gl_{1|1}$. This confirms what we expect from \S\ref{ssclassical}.

\subsection{An Example}
Let us consider a more complicated example. Consider $\g_2$, the $14$-dimensional exceptional simple Lie algebra of rank $2$.  It has Cartan matrix $A = \begin{pmatrix} 2 & -3 \\ -1 & 2\end{pmatrix}$ and Dynkin diagram:

 \[\dynkin[labels={\beta,\alpha},scale=4, arrow shape/.style={-{jibladze[length=7pt]}}, label distance = 5pt]  G{oo}\]
where $\beta$ corresponds to the index $1$ and $\alpha$ to the index $2$. Its root system can be visualized graphically as the following:

\begin{center}
  \begin{tikzpicture}
    \foreach\ang in {60,120,...,360}{
     \draw[->,black!80!black,thick] (0,0) -- (\ang:2cm);
    }
    \foreach\ang in {30,90,...,330}{
     \draw[->,black!80!black,thick] (0,0) -- (\ang:3.46cm);
    }
    \node[anchor=south west,scale=0.6] at (2,0) {$\alpha$};
    \node[anchor=south east,scale=0.6] at (-2.5,1.7) {$\beta$};
    \node[anchor=south west,scale=0.6] at (-1.5,1.7) {$\beta + \alpha$};
    \node[anchor=south east,scale=0.6] at (1.5,1.7) {$\beta + 2\alpha$};
    \node[anchor=south west,scale=0.6] at (2.5,1.7) {$\beta + 3\alpha$};
    \node[anchor=south west,scale=0.6] at (0.15,2.85) {$2\beta + 3\alpha$};
  \end{tikzpicture}
\end{center}
However, in characteristic $3$, the root spaces that involve $\pm 3\alpha$ form an ideal that trivially intersects the Cartan subalgebra. Therefore, the construction of a Lie algebra $\g(A)$ in terms of its Cartan matrix $A$ tells us that if $A$ is reduced modulo 3, then $\g(A)$ is a $10$-dimensional Lie algebra with basis
 
\[e_1, e_2, [e_1, e_2], [e_2, [e_1, e_2]]\] 
for the upper triangular subalgebra,  

\[f_1, f_2, [f_1, f_2], [f_2, [f_1, f_2]]\] 
for the lower triangular subalgebra, and $h_1, h_2$ for the Cartan subalgebra.
\par
Because $e_3$ is nilpotent of degree $3$, we can realize $\g_2$ as an object in $\Rep \bal_3$ with respect to $e_3$. Let's see what happens when we semisimplify. We have the following decomposition into indecomposables (which is not canonical). The copies of $J_1$ are given by $[e_2, [e_1, e_2]]$, $[f_2, [f_1, f_2]]$, and $h_2$. The copies of $J_2$ are given by $e_2 \mapsto [e_1, e_2]$ and $[f_1,f_2] \mapsto f_2$. A copy of $J_3$ arises from $f_1 \mapsto h_1 \mapsto e_1$. Therefore, the semisimplification of $\g_2$ with respect to $e_1$ is a Lie superalgebra of superdimension $(3|2)$.
\par
Now, let's compute the bracket on this Lie superalgebra using Proposition \ref{strucconsts}. The bracket of $[f_2, [f_1, f_2]]$ and $[e_2, [e_1, e_2]]$ is $h_2$; the adjoint action of $h_2$ on $[f_2, [f_1, f_2]]$ is $2[f_2, [f_1, f_2]]$ and on $[e_2, [e_1, e_2]]$ it is $-2[e_2, [e_1, e_2]]$. Because these correspond to copies $J_1$, their semisimplification and therefore the even part of $\ov{\g_2}$ is $\sll$. To compute the action of this even part on the odd part, we note that $[h_2, [f_1, f_2]] = [f_1, f_2]$ and $[h_2, f_2] = -2f_2 = f_2$. Similarly, $[h_2, e_2] = 2e_2 = -e_2$ and $[h_2, [e_1,e_2]] = -[e_1, e_2]$. This tells us that the weights of the odd part as an $\sll$-module are $\pm 1$. So the odd part is the two-dimensional tautological $\sll$-module. Finally, one can consider the bracket on the odd part. We have 

\[ \left[\ov{[f_1,f_2] \rightarrow f_2}, \ov{[f_1,f_2] \rightarrow f_2} \right] = -\ov{[f_2,[f_1,f_2]]};\]
\[ \left[\ov{[f_1,f_2] \rightarrow f_2},  \ov{e_2 \rightarrow [e_1, e_2]}\right] = \ov{-[[f_1,f_2], [e_1,e_2]] + [f_2,e_2]} = \ov{h_2}.\]
\[ \left[\ov{e_2 \rightarrow [e_1, e_2]}, \ov{e_2 \rightarrow [e_1, e_2]} \right] = \ov{[e_2,[e_1,e_2]]}.\]
Putting this all together, we deduce that the semisimplification $\ov{\g_2}$ of $\g_2$ is the Lie superalgebra $\mathfrak{osp}_{1|2}$.

\subsection{Main Theorem}\label{maintsec}
We are now ready to state the main theorem of this paper. Let us first introduce some notation. Let $\g(A)$ be a finite-dimensional contragredient Lie algebra with Cartan matrix $A = (a_{ij})$ of size $n$. Let $\mathcal{I} = \{i_1, i_2, \dots, i_l\}$ be a subset of boundary nodes of the Dynkin diagram of $\g(A)$ such that each chosen boundary node has one single edge coming out of it, such that the chosen boundary nodes are pairwise non-adjacent, and such that no two chosen boundary nodes share an adjacent node. Let $\mathcal{J} = \{j_1,\dots,j_l\}$ be the indices such that $j_r$ is the node connected to $i_r$ for each $r$ between $1$ and $l$, inclusive. Finally, we require that $\mathcal{I}$ be chosen so that $a_{ii} = 2$ for all $i \in \mathcal{I} \cup \mathcal{J}$. The following picture of the Dynkin diagram of $\ese$ illustrates an example:
\\
\begin{center}\label{boundarynodes}
  \begin{dynkinDiagram}[mark=o,labels={1,2,3,4,5,6,7},scale=4, label distance = 5pt]E7
  \fill[black, draw=black] (root 2) circle (0.05cm);
  \fill[black, draw=black] (root 7) circle (0.05cm);
  \fill[gray, draw=black] (root 4) circle (0.05cm);
  \fill[gray, draw=black] (root 6) circle (0.05cm);
  \end{dynkinDiagram}
\end{center}
The boundary nodes are $1, 2$ and $7$. The nodes $2$ and $7$, colored in black, are the chosen subset of boundary nodes, and the nodes $4$ and $6$, colored in gray, are the nodes attached to the boundary nodes $2$ and $7$, respectively. Hence, $l = 2$ and $\{i_1, i_2\} = \{2,7\}$ and $\{j_1,j_2\} = \{4,6\}$.
\par
Let $\widetilde{A} = (\widetilde{a}_{ij})$ be the $(n-l) \times (n-l)$ matrix obtained from $A$ by setting $a_{j_r,j_r} = 0$ for all $r$ and deleting the row and column attached to $i_r$ for all $1 \leq r \leq l$. Note that the $e_{i_r}$'s pairwise commute, as do the $f_{i_r}$'s.
\par
Before proceeding, it is useful to review the constructions in \S\ref{char3constructions} and look closely at the Cartan matrices of the exceptional Lie algebra and the exceptional Lie superalgebra it semisimplifies to. The key idea is that we have a copy of $J_2$ given by $e_{j_r} \mapsto [e_{i_r}, e_{j_r}]$ for each $1 \leq r \leq l$, and in the semisimplification these merge to form an odd Chevalley generator. Now, let's state some supporting lemmas.

\begin{lem}\label{obincat}
 The element $e = \sum_{r=1}^{l} e_{i_r}$ is nilpotent of degree $3$, and $\g(A)^{(1)}$ can be realized as a Lie algebra in $\Rep \bal_3$ w.r.t. $\ad e$.
\end{lem}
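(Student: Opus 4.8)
The plan is to show that $e = \sum_{r=1}^{l} e_{i_r}$ satisfies $(\ad e)^3 = 0$, so that letting $t$ act as $\ad e$ makes $\g(A)$ into a $\KK\bal_3 = \KK[t]/(t^3)$-module, and then to observe that the bracket on $\g(A)$ is automatically a morphism in $\Rep\bal_3$ by the Jacobi identity (exactly as in the discussion of $\Rep\bal_p$ in \S\ref{Verp}). Only the nilpotency claim requires work.

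First I would reduce the nilpotency of $\ad e$ to a statement about each summand. Since the $i_r$ are boundary nodes and the nodes $i_1,\dots,i_l$ are pairwise non-adjacent in the Dynkin diagram (a boundary node is connected only to its unique neighbor $j_r$, and $j_r \neq i_{r'}$ since a boundary node has a neighbor which is not itself a boundary node), the root vectors $e_{i_1},\dots,e_{i_l}$ pairwise commute — this is the remark just before the lemma, and follows because $\al_{i_r}+\al_{i_{r'}}$ is not a root when $a_{i_r i_{r'}}=0$. Hence $\ad e = \sum_r \ad e_{i_r}$ is a sum of pairwise commuting operators, and it suffices to bound the nilpotency of each $\ad e_{i_r}$ together with the interaction of their images. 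More precisely, I would argue that $e$ is a root vector for $\g(A)$ restricted to the (sum of the) $\sll$-subalgebras attached to the simple roots $\al_{i_1},\dots,\al_{i_l}$, which commute with each other, so $\g(A)$ decomposes as a module over $\prod_r \sll^{(i_r)}$, and on each such summand $\ad e$ acts as a sum of raising operators on tensor products of $\sll$-modules.

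The key computational input is that in an exceptional Lie algebra $\ff,\esi,\ese,\ee$, a simple root vector $e_i$ acts on $\g(A)$ with all Jordan blocks of size at most $3$: equivalently, in the $\al_i$-string through any root $\gamma$, the string has length at most $3$, i.e. $\gamma - \al_i, \gamma, \gamma+\al_i$ exhausts it, which holds because for simply-laced (and more generally for the exceptional) root systems no root $\gamma$ has $\gamma + 2\al_i$ or $\gamma - \al_i$ and $\gamma+2\al_i$ both roots — concretely the maximal $\al_i$-string length in $E_8$ (hence in $E_7,E_6,F_4$) is $3$, attained on the adjoint representation. Since for each $r$ the node $i_r$ is a boundary node, deleting it and working inside the remaining diagram shows the relevant strings do not lengthen when we add different $e_{i_{r'}}$. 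Thus $(\ad e_{i_r})^3 = 0$ on all of $\g(A)$, and because the $\ad e_{i_r}$ commute, $(\ad e)^3 = \sum (\text{multinomial terms})$ where each term contains some $(\ad e_{i_r})^{k_r}$ with $\sum k_r = 3$; I must check that whenever all $k_r \le 2$ the term still vanishes. This is where the boundary-node hypothesis is essential: the images of $\ad e_{i_r}$ and $\ad e_{i_{r'}}$ overlap only through the node $j_r$ chain, and a short root-combinatorial check (the only roots moved by two different $e_{i_r}$'s simultaneously lie in a small rank-$\le 4$ subsystem) shows $(\ad e_{i_r})^2(\ad e_{i_{r'}}) = 0$ and $(\ad e_{i_r})(\ad e_{i_{r'}})(\ad e_{i_{r''}}) = 0$ on $\g(A)$.

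The main obstacle is precisely this last point — controlling the mixed terms $(\ad e_{i_r})(\ad e_{i_{r'}})(\ad e_{i_{r''}})$ and $(\ad e_{i_r})^2(\ad e_{i_{r'}})$ rather than just the pure powers. I expect to handle it the same way Lemma \ref{dettildeA} is handled: there are only finitely many exceptional Lie algebras and finitely many admissible choices of boundary-node subsets, so one can verify $(\ad e)^3 = 0$ case by case (indeed by a short computation in \texttt{SuperLie}, or by the root-string bookkeeping above, noting that in every admissible configuration at most two of the chosen boundary nodes can have overlapping root-string supports). Once $(\ad e)^3 = 0$ is established, the realization of $\g(A)$ as a Lie algebra in $\Rep\bal_3$ with $t \mapsto \ad e$ is immediate from the construction in \S\ref{Verp}, completing the proof.
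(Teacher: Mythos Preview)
Your proposal contains a genuine gap. The claim that $(\ad e_{i_r})(\ad e_{i_{r'}})(\ad e_{i_{r''}})=0$ on $\g(A)$ is simply false, so no root-combinatorial or case-by-case check of the kind you describe can succeed. For a concrete counterexample take $\esi$ with the three boundary nodes $1,2,6$ and the root $\gamma=\alpha_3+\alpha_4+\alpha_5$: then $\gamma+\alpha_1$, $\gamma+\alpha_1+\alpha_2$ and $\gamma+\alpha_1+\alpha_2+\alpha_6$ are all roots (in a simply-laced system a $\{0,1\}$-combination of simple roots is a root whenever its support is connected), so $(\ad e_6)(\ad e_2)(\ad e_1)e_\gamma\neq 0$. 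Indeed, under $\ad(e_1+e_2+e_6)$ the paper records $13$ copies of $J_3$ in $\esi$, only $3$ of which come from the $\sll$-triples; the remaining $J_3$'s are produced precisely by such nonzero mixed products.

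What you are missing is the role of the characteristic, which is the entire point of the paper's phrase ``in characteristic~$3$''. Once the $\ad e_{i_r}$ commute, the Freshman's dream gives
\[
(\ad e)^3=\Bigl(\sum_r \ad e_{i_r}\Bigr)^3=\sum_r(\ad e_{i_r})^3,
\]
since every cross multinomial coefficient $\binom{3}{2,1}=3$ and $\binom{3}{1,1,1}=6$ vanishes modulo~$3$; the operator products themselves need not vanish. It then only remains to see $(\ad e_{i_r})^3=0$, and this is where the Serre relations enter: because $i_r$ is a boundary node with $a_{i_r j}\in\{0,-1\}$, one has $(\ad e_{i_r})^2 e_j=0$ for every $j$, and $(\ad e_{i_r})^3$ visibly kills each $f_j$ and $h_j$. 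In characteristic~$3$ the cube of a derivation is again a derivation, so $(\ad e_{i_r})^3$ vanishes on all of $\g(A)$. Equivalently, one may argue directly that $(\ad e)^3$ is a derivation annihilating every Chevalley generator and is therefore zero. Either way, the argument is immediate once the characteristic is used, and does not require your step~4 at all.
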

\begin{proof}
  Without loss of generality, by suitably reordering the indices, we may assume that $i_1 = 1, i_2 = 2, \dots, i_l = l$ and $j_1 = 1 + l, j_2 = 2 + l, \dots, j_l = 2l$. Then, by the Serre relations, we have $[e_i, e_j] = 0$ for all  $1 \leq i \leq l$ and $1 \leq j \neq i + l \leq n$ and $[e_i, e_{i+l}] \neq 0$ but $[e_i, [e_i, e_{i+l}]] = 0$. So each $e_i$ is nilpotent of degree $3$ and $\ad e_i$ pairwise commute for $1 \leq i \leq l$. It follows by the binomial theorem that $\ad e = \ad e_1 + \cdots + \ad e_l$ cubes to zero in characteristic $3$. This shows that $\g(A)$ can be realized as an object in $\Rep \bal_3$.
\end{proof}
For the remainder of this section, we will assume that the indices are reordered as in the proof of Lemma \ref{obincat}. Recall the $Q$-grading on $\g(A)$ in \eqref{qgrading}.

\begin{lem}\label{bas}
  There exists a basis $B$ of $\g(A)^{(1)}$ in which $\ad e$ acts by the direct sum of Jordan blocks, and the generators $e_i, f_i$ ($1 \leq i \leq n$) of $\g(A)^{(1)}$ together with a suitable basis of the Cartan subalgebra $\h$ of $\g(A)^{(1)}$ collectively form a subset of $B$. 
\end{lem}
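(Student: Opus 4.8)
The plan is to build the basis $B$ by combining the root-space decomposition of $\g(A)$ with the explicit Jordan-block description of the $\ad e$-action on each weight string. First I would recall that $\g(A)$ has a root-space decomposition $\g(A) = \h(A) \oplus \bigoplus_{\gamma} \g_\gamma$, each $\g_\gamma$ one-dimensional, and that $\ad e = \ad e_1 + \cdots + \ad e_l$ commutes with the grading by the sublattice $Q' = \bigoplus_{i > l}\Z\alpha_i$ obtained by ``forgetting'' the $\alpha_1,\dots,\alpha_l$ components of roots (since each $\ad e_i$ shifts a root by $\alpha_i$, $i \le l$). Thus $\g(A)$ decomposes as a $\Rep\bal_3$-module into the $Q'$-homogeneous pieces, and it suffices to exhibit, inside each such piece, a basis on which $\ad e$ is in Jordan form and which contains the relevant generators. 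Because the $e_i$ ($i \le l$) pairwise commute and each satisfies $[e_i,[e_i,-]] $-nilpotency on the relevant root vectors (from the Serre relations, as in Lemma \ref{obincat}), on any fixed $Q'$-homogeneous piece only finitely many of the operators $\ad e_i$ act nontrivially and each does so with square zero; one then checks that the piece breaks into strings of length $\le 3$.

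The key step is the local analysis near the chosen boundary nodes. For each $r$, the Serre relations give $[e_{i_r}, e_{j_r}] \ne 0$, $(\ad e_{i_r})^2(e_{j_r}) = 0$, and $[e_{i_r}, e_k] = 0$ for $k \ne j_r$, $k \le n$; dually for the $f$'s one has $[e_{i_r}, f_{j_r}] = 0$ and $\ad e_{i_r}$ sends $f_{i_r} \mapsto h_{i_r} \mapsto -2e_{i_r}$, a copy of $J_3$. So I would isolate, for each $r$, the $J_3$-string $f_{i_r} \mapsto h_{i_r} \mapsto -2e_{i_r}$ and the $J_2$-strings $e_{j_r} \mapsto [e_{i_r},e_{j_r}]$ and $[f_{i_r},f_{j_r}] \mapsto f_{j_r}$ (up to scalar), check these are linearly independent and span complementary subspaces (they live in distinct root spaces: $\alpha_{i_r}$, $-\alpha_{i_r}$, $0$, $\alpha_{j_r}$, $\alpha_{i_r}+\alpha_{j_r}$, etc.), and then choose $h_{i_r}$ together with a basis of $\ker(\ad e)\cap\h(A)$ lying in $\h(A)$ — this is where one picks the ``suitable basis of the Cartan subalgebra'': take the $h_{i_r}$ for the strings above, and extend to a basis of $\h(A)$ by elements of $\ker(\ad e|_{\h})$, which has codimension $l$ since $\ad e$ maps $\h(A)$ onto $\mathrm{span}(e_1,\dots,e_l)$, an $l$-dimensional space by the independence of the $e_i$. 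The remaining root spaces not yet accounted for are then grouped into $\ad e$-strings by a finite, case-free argument: each root $\gamma$ lies in a unique maximal $\{\alpha_1,\dots,\alpha_l\}$-string of roots, these strings have length $\le 3$ (again Serre, plus finiteness of the root system), and picking a lowest-weight vector in each string and applying powers of the relevant single $\ad e_i$ produces a Jordan basis; one must verify the chosen generators $e_i, f_i$ for $i > l$ do occur as such lowest-weight vectors (they do: $e_i$ with $i > l$, $i \ne j_r$, spans a $J_1$ since $[e_{i'},e_i]=0$ for all $i' \le l$, while $e_{j_r}$ starts the $J_2$ above, and similarly for the $f$'s).

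The main obstacle I anticipate is bookkeeping rather than conceptual: one has to be sure that the strings constructed for different roots and for the distinguished generators together exhaust $\g(A)$ without overlap, i.e. that every root space is hit exactly once and that $\h(A)$ is handled consistently with the $J_3$-strings $f_{i_r}\mapsto h_{i_r}\mapsto -2e_{i_r}$. This is where I would lean on the explicit $\g_2$-style computations of \S\ref{explicit} as a template and, if needed, invoke a finite check over the (few) exceptional algebras and boundary-node choices, exactly as in Lemma \ref{dettildeA}. A secondary point to be careful about is characteristic $3$: the scalar $-2 = 1$ and the relation $[e_{i_r},[e_{i_r},e_{j_r}]]=0$ must genuinely use $p=3$ (the binomial coefficient $\binom{2}{1}=2$ would obstruct it in characteristic $0$), so the claim that $\ad e$ has order exactly $3$ and that the strings close up at length $3$ is special to this setting and should be stated as such.
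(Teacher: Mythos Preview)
Your $Q'$-grading idea---quotienting the root lattice by $\langle\alpha_1,\dots,\alpha_l\rangle$ so that $\ad e$ preserves each homogeneous piece---is correct and is in fact a cleaner organizing device than what the paper does. The paper instead collects the explicit strings containing the Chevalley generators into a subspace $W$, lets $W'$ be the span of all remaining root spaces, and then spends the bulk of the argument verifying by a height-$(-2)$ and height-$(-3)$ case analysis (using the Serre relations repeatedly) that $W'$ is $\ad e$-invariant; only afterwards does it take an arbitrary Jordan basis of $W'$. Your grading makes that invariance automatic: the $Q'$-homogeneous pieces containing $e_k,f_k$ ($k>l$) and $\h(A)$ are exactly the small subspaces you list, and everything else is already a sum of $\ad e$-stable $Q'$-pieces.

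Two points to fix. First, your description of the remaining root spaces as lying in a ``unique maximal $\{\alpha_1,\dots,\alpha_l\}$-string'' on which one applies ``powers of the relevant single $\ad e_i$'' breaks down when $l>1$. For instance, in $\esi$ with $\{i_1,i_2\}=\{1,2\}$ (so $\{j_1,j_2\}=\{3,4\}$), the $Q'$-piece through $e_{\alpha_3+\alpha_4}$ is four-dimensional (roots $\alpha_3+\alpha_4$, $\alpha_1+\alpha_3+\alpha_4$, $\alpha_2+\alpha_3+\alpha_4$, $\alpha_1+\alpha_2+\alpha_3+\alpha_4$), both $\ad e_1$ and $\ad e_2$ act, and the piece decomposes as $J_3\oplus J_1$ rather than a single chain. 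This is harmless for the lemma---none of the generators live in such pieces, so you may simply take \emph{any} Jordan basis there, exactly as the paper does on $W'$---but the explicit single-string picture should be dropped. Second, the Serre relation $(\ad e_{i_r})^2(e_{j_r})=0$ holds in every characteristic (it is the relation attached to $a_{i_r j_r}=-1$); what genuinely needs $p=3$ is the vanishing of $(\ad e)^3=\bigl(\sum_r\ad e_{i_r}\bigr)^3$ via the multinomial coefficients, and that was already recorded in Lemma~\ref{obincat}.
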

\begin{proof}
  We will construct such a basis. It is useful to extend the action of $e$ to the action of the $\sll$-triple $\{e, f, h\}$, where $f = f_1 + f_2 + \cdots + f_l$ and $h = [e, f] = h_1 + h_2 + \cdots + h_l$. We have the following blocks of type $J_1$ in $\g(A)^{(1)}$:

  \[e_{1+2l}, \dots, e_{n} \ \mathrm{and} \ f_{1+2l}, \dots, f_{n}.\]
  There are blocks of type $J_2$ formed by:
  \[e_{1+l} \mapsto [e, e_{1+l}], \dots, e_{2l} \mapsto [e, e_{2l}].\]
  Since $a_{i_r,j_r} = -1$ for all $r$, we also have other blocks of type $J_2$:

  \[[f,f_{1+l}] \mapsto f_{1+l}, \dots,[f, f_{2l}] \mapsto f_{2l}.\]
  We then have the following $\sll$-triples giving $J_3$'s:

  \[f_1 \mapsto h_1 \mapsto e_1, \dots, f_l \mapsto h_l \mapsto e_l.\]
  Finally, we can consider the remaining part of the Cartan subalgebra. We have additionally the following blocks of type $J_1$:

  \[h_{1+2l}, \dots, h_{n}, h_{1+l} - h_{1}, \dots, h_{2l} - h_{l}.\]
  We claim that the sum $W$ of all of these copies of $J_i$ is a direct sum. First, note that each $J_i$ is $Q$-graded even though $e$ is not $Q$-homogeneous because $[e, e_{i+l}] = [e_i, e_{i+l}]$, $[f, f_{i+l}] = [f_i, f_{i+l}]$, and $[e_i, e_j] = [f_i, f_j] = 0$ for $1 \leq i,j \leq l$ by the Serre relations. Then, because each root space appearing here is $1$-dimensional and because 
  
  \[\{h_1, \dots, h_l, h_{1+l} - h_1, \dots, h_{l+l} - h_l, h_{2l+1}, \dots, h_n\}\] 
  is a basis of the Cartan subalgebra, their sum is direct.
  \par
  Now, we argue that $W$ is a direct summand of $\g(A)^{(1)}$ in $\Rep \bal_3$. First, note that $W$ is $Q$-graded because each $J_i$ is. This means that the height grading on $\g(A)^{(1)}$ also restricts to $W$. Let $\g_i$ denote the subspace of height $i$ in $\g(A)^{(1)}$. Then, $W$ contains $\g_{-1}, \g_0, \g_1$ and has trivial intersection with $\g_i$ for $|i| > 2$. Let $W'$ be the span of the root spaces that do not intersect $W$. The subspace $W'$ is also $Q$-graded and graded by height, has trivial intersection with each of $\g_{-1}, \g_0, \g_1$, and contains $\g_i$ for $|i| > 2$. Clearly, $\g(A)^{(1)} = W \oplus W'$ as vector spaces; we need to show that $W'$ is $\ad e$-invariant.
  \par
  To do so, consider any $x \in W'$, and let $x = \sum_{|i| \geq 2} x_i$ be a height decomposition with $x_i \in \g_i$. Now, note that $e$ raises height by $1$ on homogeneous vectors, so for $i \leq -4$ and $i \geq 2$, we have $[x, x_i] \in W'$. Therefore, it suffices to assume that $x$ is of the form $x = x_{-3} + x_{-2}$.
  \par
  The terms $[e, x_{-2}]$ and $[e, x_{-3}]$ have different heights, so we can look at them individually. Let's start with $[e, x_{-2}]$. This has height $-1$, so it lies in $W$. Hence, we can write $[e, x_{-2}] = \sum_{i=1}^n c_i f_i$ for suitable $c_i$. Suppose $[e, x_{-2}] \neq 0$. By the $Q$-grading and because $e = \sum_{k=1}^l e_k$, this means we can write 
  
  \[x_{-2} = \sum_{k=1}^l\sum_{i=1}^n c_{ki}[f_k, f_i].\] 
  However, by the Serre relations, $[f_k, f_i] = 0$ for all $1 \leq k \leq l$ and $1 \leq  i \neq k + l \leq n$. Hence, $x_{-2} = \sum_{k=1}^l c_{k,k+l} [f_k, f_{k+l}] $ which lies in $W$, a contradiction if $x_{-2} \neq 0$. Therefore, $[e,x_{-2}] = 0$.
  \par
  So we can actually assume that $x$ is homogeneous of height $-3$. Then, we can write $[e,x] = w + w'$, where $w \in W, w' \in W'$ both have height $-2$. In particular, $w$ is of the form $w = \sum_{i=1}^l d_i[f_i, f_{i+l}]$ for suitable $d_{i}$.  On the other hand, the root decomposition of $w'$ can only involve the root spaces of height $-2$ with root not equal to $-\alpha_i - \alpha_{i+l}$ for any $i$ such that $1 \leq i \leq l$. By the Serre relations, it follows that $w' = \sum_{i,j=l+1}^{n} y_{ij}$ where $y_{ij}$ lies in the root space with root $-(\alpha_i + \alpha_j)$. Now note that if $-(\alpha_k + \alpha_i + \alpha_{i+l})$ is a root for $1 \leq i, k \leq l$, then the associated root space is one-dimensional. This root space is spanned by $[f_k, [f_i, f_{i+l}]]$ because $[f_i, f_{i+l}]$ is nonzero and $f_i$ and $f_k$ commute by the Serre relations.  Hence, appealing to the $Q$-grading again, we deduce that $x$ is of the form:

  \begin{align*}
    x = \sum_{k=1}^l\sum_{i,j=l+1}^{n} y_{kij} +  \sum_{k=1}^l\sum_{i=1}^l d_{ki}[f_k,[f_i, f_{i+l}]].
  \end{align*}
  where $y_{kij}$ lies is in the root space associated to the root $-(\alpha_k + \alpha_i + \alpha_j)$ and $d_{ki}$ are suitable constants. The bracket of the first sum with $e$ is $w'$ and the bracket of the second sum with $e$ is $w$. However,  $[f_k,[f_i,f_{i+l}]] = 0$ for $1 \leq k,i \leq l$ by the Serre relations, so the second sum is zero, which means that $w = 0$. And therefore $[e, x] = w' \in W'$. This shows the claim. Now, the basis $B$ is the basis of $W$ prescribed above together with any Jordan basis of $W'$.
\end{proof}
\begin{rem}
We want to emphasize that at no point in the proof of Lemma \ref{bas} did we appeal to the simplicity of $\g(A)$ (in fact $\g(A)$ is not simple when $A$ is the Cartan matrix of $\esi$). This will be important for Conjecture \ref{conj}.
\end{rem}
Let's work out an explicit example. Consider the Dynkin diagram of $\ese$, labeled as follows:

\[\dynkin[labels={1,2,3,4,5,6,7},scale=4, label distance = 5pt]  E{ooooooo}\]
Suppose we semisimplify $\ese$ with respect to $e_1 + e_2$. Then, $l = 2$, $\{i_1, i_2\} = \{1, 2\}$, and $\{j_1, j_2\} = {3, 4}$. As described in the lemma, we have the following indecomposables as direct summands, whose basis vectors commute with $e_1$ and $e_2$, and therefore $e = e_1 + e_2$:

\[\text{blocks of type} \ J_1: \ \ e_5, e_6, e_7, f_5, f_6, f_7, h_5, h_6, h_7\]
We also have these indecomposables, as their basis vectors commute with $e = e_1 + e_2$:

\[\text{other blocks of type} \ J_1: \ \ h_{3} - h_{1}, h_{4} - h_{2}.\]
These indecomposables arise because vertex 3 is connected to vertex 1 by a single arrow and vertex 4 is connected to vertex 2 by a single arrow in the Dynkin diagram:

\[\text{blocks of type} \ J_2:  \ \  e_3 \mapsto [e_1, e_3], e_4\mapsto [e_2, e_4], [f_1, f_3] \mapsto f_3, [f_2, f_4] \mapsto f_4.  \]
Finally, we have two $\sll$-triples associated to $e_1$ and $e_2$.

\[\text{blocks of type} \ J_3: \ \ \ \ f_1 \mapsto h_1 \mapsto e_1, f_2 \mapsto h_2 \mapsto e_2. \]
Notice how all the Chevalley generators are present in this decomposition and that each subspace is $Q$-graded. This concludes the example.
\par
Now, we will fix such a decomposition of $\g(A)^{(1)}$ into indecomposables as in Lemma \ref{bas}. Each Chevalley generator $e_i, f_i$ for $1 \leq i \leq n$ lies in a unique copy of $J_1$, $J_2$, or $J_3$; in the semisimplification, the image of each of these indecomposables will be an $L_1$ or $L_2$ or $0$. Therefore, we will refer to the basis vector associated to each of these copies of $L_1$ and $L_2$ as the image of the corresponding generator (in the language of \S\ref{explicit} and Proposition \ref{strucconsts}).
\par
Let $\ov{\g(A)^{(1)}}^{gen}$ denote the subquotient of $\ov{\g(A)^{(1)}}$ generated by the images of the generators, modulo the additional relation that $[x, [x,x]] = 0$ for all odd $x$ (recall that this is not automatic in characteristic $3$). We note that when $[x, [x,x]]$ is nonzero, it is purely odd. This will be important later when considering Cartan subalgebras.
\par
Recall the matrix $\widetilde{A} = (\widetilde{a}_{ij})$, which is defined to be the $(n-l) \times (n-l)$ matrix obtained from $A$ by setting $a_{j_r,j_r} = 0$ for all $r$ and deleting the row and column attached to $i_r$ for all $1 \leq r \leq l$. Now, recall the Lie algebra $\widetilde{\g}(\widetilde{A})$ defined in \S \ref{contraglie} and its generators and relations in \eqref{rels}. In particular, its upper triangular and lower triangular subalgebras are freely generated, and it is graded, see \eqref{qgrading}. We will use the letter $\widetilde{Q}$ to denote its grading and to distinguish it from the $Q$-grading on $\widetilde{\g}(A)$ and its subquotients, which incluldes $\g(A)^{(1)}$. We claim the following:

\begin{lem}\label{surj}
  There exists a surjective homomorphism from $\widetilde{\g}(\widetilde{A})^{(1)}$ to $\ov{\g(A)^{(1)}}^{gen}$.
\end{lem}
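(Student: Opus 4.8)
The plan is to write down an explicit homomorphism $\phi\colon\widetilde{\g}(\widetilde{A})\to\ov{\g(A)}$ whose image is the subalgebra generated by the images of the generators of $\g(A)$, and then postcompose with the projection onto $\ov{\g(A)}^{gen}$; surjectivity will then be immediate. With the indices reordered as in Lemma \ref{obincat}, the rows and columns of $\widetilde{A}$ are indexed by $l+1,\dots,n$, with $l+1,\dots,2l$ (the $j_r$'s, for which $\tilde a_{j_r,j_r}=0$) the odd nodes and $2l+1,\dots,n$ the even ones; by Lemma \ref{dettildeA} the generators $\tilde h_{l+1},\dots,\tilde h_n$ form a basis of the Cartan subalgebra of $\widetilde{\g}(\widetilde{A})$, and $\widetilde{\n}^{+}(\widetilde{A}),\widetilde{\n}^{-}(\widetilde{A})$ are free. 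Fixing the decomposition of $\g(A)$ supplied by Lemma \ref{bas}, I would set
\[\phi(\tilde e_{l+r})=\ov{e_{l+r}\mapsto[e_r,e_{l+r}]},\qquad \phi(\tilde f_{l+r})=\ov{[f_r,f_{l+r}]\mapsto f_{l+r}}\qquad(1\le r\le l),\]
\[\phi(\tilde e_k)=\ov{e_k},\qquad \phi(\tilde f_k)=\ov{f_k}\qquad(2l+1\le k\le n),\]
together with $\phi(\tilde h_{l+r})=\ov{h_{l+r}-h_r}$ and $\phi(\tilde h_k)=\ov{h_k}$. These elements carry the right parities, since in characteristic $3$ the $J_2$'s semisimplify to odd objects and the $J_1$'s to even ones. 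Because the triangular parts of $\widetilde{\g}(\widetilde{A})$ are free and its Cartan is spanned by the $\tilde h_i$, a homomorphism out of $\widetilde{\g}(\widetilde{A})$ amounts to exactly such an assignment of generators subject only to the relations \eqref{rels}, so it suffices to check those relations for the images above.

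Each such check reduces, via Proposition \ref{strucconsts} and the fact that the summands in Lemma \ref{bas} are $Q$-graded, to a computation of a bracket or an $\ad$-eigenvalue inside $\g(A)$, read modulo $3$. The relation $[\h,\h]=0$ of \eqref{rels} holds because the $\phi(\tilde h_i)$ are images of mutually commuting elements of $\h(A)$. For $[h,\tilde e_j]=\tilde\alpha_j(h)\tilde e_j$ and its $\tilde f$-counterpart, one computes the scalar by which the relevant Cartan element of $\g(A)$ acts on the bottom vector of the indecomposable containing the given generator: for instance $\ad(h_{l+s}-h_s)$ acts on $e_{l+r}$ by $a_{l+s,l+r}-a_{s,l+r}$, where the spurious term $a_{s,l+r}$ (for $r\ne s$), and the terms $a_{l+s,r},a_{s,r}$ coming from the other vector of the $J_2$, vanish because $i_s$ is a leaf adjacent only to $j_s=l+s$. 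The one case that looks delicate is $r=s$, where this scalar is $a_{l+r,l+r}-a_{r,l+r}=2-(-1)=3\equiv 0\pmod 3$; this matches $\tilde a_{j_r,j_r}=0$ and is precisely why $\widetilde{A}$ is obtained by zeroing those diagonal entries. For $[\tilde e_i,\tilde f_j]=\delta_{ij}\tilde h_i$, the off-diagonal vanishing holds because the pertinent weights---$\alpha_r+\alpha_{l+r}-\alpha_s-\alpha_{l+s}$ in the odd-odd case, $\alpha_k-\alpha_r-\alpha_{l+r}$ in the mixed cases---have coefficients of mixed sign and hence are not roots; the odd diagonal case $i=j=l+r$ uses part (2) of Proposition \ref{strucconsts}, so that $[\phi(\tilde e_{l+r}),\phi(\tilde f_{l+r})]$ is the image of $-[e_{l+r},f_{l+r}]+[[e_r,e_{l+r}],[f_r,f_{l+r}]]$, and an $A_2$-computation in the subalgebra generated by $e_{i_r},f_{i_r},e_{j_r},f_{j_r}$ (legitimate since $a_{i_r,j_r}=a_{j_r,i_r}=-1$) evaluates this to $-h_{l+r}-(h_r+h_{l+r})=-2h_{l+r}-h_r=-2(h_{l+r}-h_r)-3h_r$, i.e.\ to $\ov{h_{l+r}-h_r}=\phi(\tilde h_{l+r})$ after reducing mod $3$; the even diagonal case $i=j=k\ge 2l+1$ is immediate from part (1).

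Granting \eqref{rels}, $\phi$ is a homomorphism of operadic Lie superalgebras; the further relation $[x,[x,x]]=0$ imposed in $\ov{\g(A)}^{gen}$ is no obstruction here, since we are mapping into rather than out of a quotient. For surjectivity, observe that the image in $\ov{\g(A)}$ of each generator $e_i$ (resp.\ $f_i$) of $\g(A)$ is $0$ when $i\le l$ (those generators lie in $J_3$'s) and equals $\phi(\tilde e_i)$ (resp.\ $\phi(\tilde f_i)$) when $l+1\le i\le n$, while $\phi(\tilde h_{l+r})=[\phi(\tilde e_{l+r}),\phi(\tilde f_{l+r})]$ and $\phi(\tilde h_k)=[\phi(\tilde e_k),\phi(\tilde f_k)]$ are brackets of these; thus $\im\phi$ is precisely the subalgebra of $\ov{\g(A)}$ generated by the images of the generators, and composing with the projection onto $\ov{\g(A)}^{gen}$ gives the asserted surjection. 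I expect the genuine work to lie in the middle paragraph---in particular, keeping straight which vector of each indecomposable plays the role of $x_i$ versus $x_i'$ in Proposition \ref{strucconsts}, and re-expanding Cartan elements such as $-2h_{l+r}-h_r$ in the basis $B$, in which $h_{l+r}$ itself is not a basis vector (only $h_{l+r}-h_r$ and $h_r$ are), before reducing mod $3$.
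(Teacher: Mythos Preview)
Your proposal is correct and follows essentially the same approach as the paper's proof: define the map on Chevalley generators (your $\phi(\tilde e_i),\phi(\tilde f_i),\phi(\tilde h_i)$ agree, up to the harmless index shift $i\leftrightarrow i+l$, with the paper's $\widetilde e_i,\widetilde f_i,\widetilde h_i$) and verify the relations \eqref{rels} case-by-case via Proposition~\ref{strucconsts}, with the key odd--odd diagonal computation $-[e_{l+r},f_{l+r}]+[[e_r,e_{l+r}],[f_r,f_{l+r}]]=-2h_{l+r}-h_r\equiv h_{l+r}-h_r\pmod 3$ matching the paper exactly. Your treatment of surjectivity (first landing in $\ov{\g(A)}$ and then projecting) is equivalent to the paper's direct formulation.
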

\begin{proof}
   Let us label the images of generators in the semisimplification. For $1 + l \leq i \leq n - l$, let $\widetilde{e}_{i}$ denote the basis vector of $\ov{\g(A)^{(1)}}^{gen}$ associated to the copy of $L_1$ for $\ov{e_{i+l}}$ (resp. for the $f$'s and $h$'s); for $1 \leq i \leq l$, let $\widetilde{e}_{i}$ denote the basis vector of $\ov{\g(A)^{(1)}}^{gen}$ associated to the copy of $L_2$ for $\ov{e_{i+l} \mapsto [e, e_{i+l}]}$ (resp. for the $f$'s), and let $\widetilde{h}_i$ denote the basis vector of $\ov{\g(A)^{(1)}}^{gen}$ associated to the copy of $L_1$ for $\ov{h_{i+l} - h_{i}}$.  Then, we have generators $\{\widetilde{e}_i, \widetilde{f}_i, \widetilde{h}_i\}_{1 \leq i \leq n-l}$ in $\ov{\g(A)^{(1)}}^{gen}$. The first $l$ indices are odd, and the last $n-2l$ are even.
  \par
  For $\widetilde{\g}(\widetilde{A})^{(1)}$, let us use the capital letters $E,F,H$ instead of $e,f,h$ to avoid conflict of notation with the generators of $\g(A)$. Recall that $\{E_i, F_i, H_i\}$ generate $\widetilde{\g}(\widetilde{A})^{(1)}$. Again, by definition of $\widetilde{A}$, it is an $(n-l) \times (n-l)$ matrix, where the first $l$ indices are odd and the last $n-2l$ are even. We claim that the surjection is given by the map $E_i \mapsto \widetilde{e}_i$, $F_i \mapsto \widetilde{f}_i$ and $H_i \mapsto \widetilde{h}_i$.
  \par
  To prove this, we need to check the relations in $\eqref{rels}$. We will check these using the language of \S\ref{explicit} and Proposition \ref{strucconsts}. Since these involve the bracket of two generators, let us split this into four cases based on the parity of each generator:

  \begin{enumerate}
    \item Let $1 \leq i, j \leq l$. The indices $i,j$ are both odd. Then, the bracket $[\widetilde{e}_i, \widetilde{f}_j]$ is given by 

    \[\ov{-[e_{i+l}, f_{j+l}] + [[e,e_{i+l}], [f,f_{j+l}]]} = \ov{-\delta_{ij}h_{i+l} + [[e_i, e_{i+l}], [f_j, f_{j+l}]]}.\]
    To compute the bracket in the second term of the RHS, we have by repeated applications of the Jacobi identity and the relations:

    \begin{align*}
      [[e_i, e_{i+l}], [f_j, f_{j+l}]] &= [[e_i, [f_j, f_{j+l}]], e_{i+l}] + [e_i, [e_{i+l}, [f_j, f_{j+l}]]] \\
      &= [[[e_i, f_j], f_{j+l}] + [f_j, [e_i, f_{j+l}]], e_{i+l}] \\ &\hphantom{=} + [e_i, [[e_{i+l}, f_{j}], f_{j+l}] + [f_j, [e_{i+l},f_{j+l}]]] \\
      &= [[\delta_{ij}h_i, f_{j+l}], e_{i+l}] + [e_i, [f_j, \delta_{ij}h_{i+l}]] \\ &= \delta_{ij}a_{i,i+l}h_{i+l} + \delta_{ij}a_{i+l,i}h_i \\
      &= -\delta_{ij}(h_{i+l} + h_i),
    \end{align*}
    since $a_{i+l,i} = a_{i,i+l} = -1$. Therefore, 
    
    \[-\delta_{ij}h_{i+l} + (-\delta_{ij}(h_{i+l} + h_i)) = \delta_{ij}(h_{i+l} - h_i)\] 
    in characteristic $3$, and we deduce 
    
    \[[\widetilde{e}_i, \widetilde{f}_j] = \ov{\delta_{ij}(h_{i+l} - h_i)} = \delta_{ij}\widetilde{h}_i.\]
    Now, let's check that $[\widetilde{h}_i, \widetilde{e}_j] = \widetilde{a}_{ij}\widetilde{e}_j$. Since $\widetilde{h}_i = \ov{h_{i+l} - h_i}$ and $\widetilde{e}_j = \ov{e_{j+l} \mapsto [e,e_{j+l}]}$, we need to compute the action of $h_{i+l}-h_i$ on both $e_{j+l}$ and $[e,e_{j+l}]$:

    \begin{align*}
      [h_{i+l} - h_i, e_{j+l}] &= (a_{i+l,j+l} - a_{i,j+l})e_{j+l}; \\
      [h_{i+l} - h_i, [e,e_{j+l}]] &= [h_{i+l} - h_i, [e_j,e_{j+l}]] \\
      &= [[h_{i+l} - h_i, e_j], e_{j+l}] + [e_j, [h_{i+l} - h_i, e_{j+l}]] \\ &= (a_{i+l,j} - a_{ij} + a_{i+l,j+l} - a_{i,j+l})[e_j, e_{j+l}]\\ &= (a_{i+l,j} - a_{ij} + a_{i+l,j+l} - a_{i,j+l})[e, e_{j+l}].
    \end{align*}
    If $i \neq j$, $a_{i+l,j+l} = \widetilde{a}_{ij}$ and $a_{ij} = a_{i+l,j} = a_{i,j+l} = 0$, and the coefficient of the RHS simplifies to $\widetilde{a}_{ij}$ in both equations. If $i = j$, then $a_{i+l,i+l} = a_{ii} = 2$ and $a_{i+l,i} = a_{i,i+l} = -1$, and again the coefficient of the RHS simplifies to $0 = \widetilde{a}_{ii}$ in both equations (remember we are in characteristic $3$). This shows that $[\widetilde{h}_i, \widetilde{e}_j] = \widetilde{a}_{ij}\widetilde{e}_{j}$. A similar argument goes through for the other relation $[\widetilde{h}_i, \widetilde{f}_j] = -\widetilde{a}_{ij}\widetilde{f}_j$.
    \par
    Finally, the Cartan subalgebra will continue to be commutative, so the last relation holds as well. Therefore, we deduce that the relations in \eqref{rels} hold between generators that correspond to odd indices.

    \item Let $1 \leq i \leq l$ and $1+l \leq j \leq n-l$. The index $i$ is odd and the index $j$ is even. We proceed similarly to the first case. First, we notice that $[\widetilde{e}_i, \widetilde{f}_j] = 0$ as this must lie in the Cartan subalgebra, which is purely even, but the first vector is odd and the second vector is even. And indeed, the condition on $i$ and $j$ ensures $i\neq j$, so we do have $[\widetilde{e}_i, \widetilde{f}_j] = \delta_{ij}\widetilde{h}_i$ in this case.
    \par
    Next, let's check $[\widetilde{h}_i, \widetilde{e}_j] = \widetilde{a}_{ij}\widetilde{e}_j$. Since $\widetilde{h}_i = \ov{h_{i+l} - h_i}$ and $\widetilde{e}_j = \ov{e_{j+l}}$, we compute 
    
    \[[h_{i+l} - h_i, e_{j+l}] = (a_{i+l,j+l} - a_{i,j+l})e_{j+l} = a_{i+l,j+l}e_{j+l} = \widetilde{a}_{ij}e_{j+l},\] 
    from which the desired result follows. A similar argument shows that $[\widetilde{h}_i, \widetilde{f}_j] = -\widetilde{a}_{ij}{f}_j$. Finally, the last relation holds again as the commutativity of the Cartan subalgebra is preserved. This shows relations \eqref{rels} in this case.

    \item Let $1+l \leq i \leq n-l$ and $1 \leq j \leq l$. The index $i$ is even and the index $j$ is odd. A similar argument to the previous case shows $[\widetilde{e}_i, \widetilde{f}_j] = \delta_{ij}\widetilde{h}_i = 0$. Now, let's compute $[\widetilde{h}_i, \widetilde{e}_j]$. Since $\widetilde{h}_i = \ov{h_{i+l}}$ and $\widetilde{e}_j = \ov{e_{j+l} \mapsto [e, e_{j+l}]}$, we check the action of $h_{i+l}$. Therefore, we have

    \begin{align*}
      [h_{i+l}, e_{j+l}] &= a_{i+l,j+l}e_{j+l} = \widetilde{a}_{ij}e_{j+1}; \\
      [h_{i+l}, [e,e_{j+l}]] &= [h_{i+l}, [e_j,e_{j+l}]] \\
      &=  [h_{i+l}, e_j], e_{j+l}] + [e_j, [h_{i+l}, e_{j+l}]]\\
      &= (a_{i+l,j} + a_{i+l,j+l})[e_j,e_{j+l}] = a_{i+l,j+l}[e, e_{j+l}] \\
      &= \widetilde{a}_{ij}[e,e_{j+l}],
    \end{align*}
    from which we deduce $[\widetilde{h}_i, \widetilde{e}_j] = \widetilde{a}_{ij}\widetilde{e}_j$. A similar argument shows that $[\widetilde{h}_i, \widetilde{f}_j] = -\widetilde{a}_{ij}\widetilde{f}_j$.  Finally, the last relation holds again as the commutativity of the Cartan subalgebra is preserved. This shows relations \eqref{rels} in this case.
    \item $1+l \leq i,j \leq n-l$. The indices $i,j$ are both even. This is the easiest case, and the relations in \eqref{rels} follow immediately.
  \end{enumerate}
  We deduce that the generators $\{\widetilde{e}_i, \widetilde{f}_i, \widetilde{h}_i\}$ satisfy the same relations (and actually, they satisfy more relations) as the generators $\{E_i, F_i, H_i\}$ of $\widetilde{\g}(\widetilde{A})^{(1)}$. This gives the desired surjection.
\end{proof}

We can now state the main theorem. 
\begin{thm}\label{maint}
If $\sdim \ov{\g(A)^{(1)}} = \sdim \g(\widetilde{A})^{(1)}$, then $\ov{\g(A)^{(1)}}^{gen} = \ov{\g(A)^{(1)}}$ and $\ov{\g(A)^{(1)}}$ is isomorphic to $\g(\widetilde{A})^{(1)}$.
\end{thm}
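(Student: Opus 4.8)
The plan is to lift the surjection of Lemma~\ref{surj} to a surjection out of $\widetilde{\g}(\widetilde A)$ \emph{before} the characteristic-$3$ relation $[x,[x,x]]=0$ is imposed, to check that this lift is injective on the Cartan subalgebra, and then to force everything to collapse with a dimension count. First I would observe that every bracket computed in the proof of Lemma~\ref{surj} takes place inside $\ov{\g(A)}$ itself, via Proposition~\ref{strucconsts}, and never uses the relation $[x,[x,x]]=0$; hence the elements $\widetilde e_i,\widetilde f_i,\widetilde h_i$ already satisfy \eqref{rels} inside $\ov{\g(A)}$. Let $S\subseteq\ov{\g(A)}$ be the subalgebra they generate — equivalently, the subalgebra generated by the images of the Chevalley generators of $\g(A)$ — so that $\ov{\g(A)}^{gen}=S/\mathfrak{i}$, where $\mathfrak{i}$ is the ideal of $S$ generated by the elements $[x,[x,x]]$ with $x$ odd. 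By Lemma~\ref{dettildeA} the $\{E_i,F_i,H_i\}$ generate $\widetilde{\g}(\widetilde A)$, so there is a surjective homomorphism $\Phi\colon\widetilde{\g}(\widetilde A)\twoheadrightarrow S$ sending $E_i\mapsto\widetilde e_i$, $F_i\mapsto\widetilde f_i$, $H_i\mapsto\widetilde h_i$; this map is implicit in the proof of Lemma~\ref{surj}. Moreover $\ov{\g(A)}$ carries the grading by $Q(\widetilde A)\cong Q(A)/\langle\alpha_{i_1},\dots,\alpha_{i_l}\rangle$ induced from the root grading of $\g(A)$ — this is precisely the sense in which the indecomposable summands in Lemma~\ref{bas} are homogeneous — and $\Phi$ is a morphism of $Q(\widetilde A)$-graded Lie superalgebras, so $\ker\Phi$ is a graded ideal.

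The crucial point is that $\ker\Phi\cap\h(\widetilde A)=0$. Indeed $\Phi$ carries the basis $\{H_i\}$ of $\h(\widetilde A)$ to $\{\widetilde h_i\}$, and by Lemma~\ref{bas} these are the distinguished basis vectors of pairwise distinct copies of $L_1$ occurring in $\ov{\g(A)}$, hence linearly independent. Since $\mathfrak{r}$ is by definition the maximal graded ideal of $\widetilde{\g}(\widetilde A)$ intersecting $\h(\widetilde A)$ trivially, this forces $\ker\Phi\subseteq\mathfrak{r}$, and therefore $S=\widetilde{\g}(\widetilde A)/\ker\Phi$ surjects onto $\widetilde{\g}(\widetilde A)/\mathfrak{r}=\g(\widetilde A)$. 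In particular $\dim\g(\widetilde A)\leq\dim S$.

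Now I close the argument with a dimension count. We have $S\subseteq\ov{\g(A)}$, so $\dim S\leq\dim\ov{\g(A)}$, while the hypothesis $\sdim\ov{\g(A)}=\sdim\g(\widetilde A)$ gives $\dim\ov{\g(A)}=\dim\g(\widetilde A)$; hence
\[
\dim\g(\widetilde A)\ \leq\ \dim S\ \leq\ \dim\ov{\g(A)}\ =\ \dim\g(\widetilde A),
\]
so equality holds throughout: $S=\ov{\g(A)}$, and the surjection $S\twoheadrightarrow\g(\widetilde A)$ is an isomorphism, i.e. $\ov{\g(A)}\cong\g(\widetilde A)$. Finally, since $\g(\widetilde A)$ is a genuine Lie superalgebra — its odd simple roots $\widetilde\alpha_r$ are isotropic, so one may invoke the analogs of the Serre relations of \cite{bouarroudj2006cartan}; in the cases of interest it is one of the Elduque–Cunha superalgebras — the relation $[x,[x,x]]=0$ holds identically in $\ov{\g(A)}\cong\g(\widetilde A)$, whence $\mathfrak{i}=0$ and $\ov{\g(A)}^{gen}=S/\mathfrak{i}=S=\ov{\g(A)}$.

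I expect the real obstacle to be the characteristic-$3$ bookkeeping around the non-automatic relation $[x,[x,x]]=0$: one must extract $\Phi$ and verify $\ker\Phi\cap\h(\widetilde A)=0$ \emph{before} passing to the quotient by $\mathfrak{i}$, because a priori $\mathfrak{i}$ could contain elements of $Q(\widetilde A)$-degree $0$ and thereby spoil the linear independence of the $\widetilde h_i$ in $\ov{\g(A)}^{gen}$; working with $S$ rather than $\ov{\g(A)}^{gen}$ until the isomorphism $\ov{\g(A)}\cong\g(\widetilde A)$ is in hand circumvents this. A secondary technical point needing care is the verification that $\ov{\g(A)}$ is $Q(\widetilde A)$-graded and that $\Phi$ respects this grading, which is what licenses the appeal to the maximality of $\mathfrak{r}$.
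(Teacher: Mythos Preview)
Your argument is correct and follows essentially the same strategy as the paper's: use the surjection of Lemma~\ref{surj}, observe that its kernel is $Q(\widetilde A)$-graded and misses the Cartan, invoke the maximality of $\mathfrak r$ to factor through $\g(\widetilde A)$, and finish with the dimension count forced by the hypothesis $\sdim\ov{\g(A)}=\sdim\g(\widetilde A)$.

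The one organizational difference is where the characteristic-$3$ relation $[x,[x,x]]=0$ is handled. The paper works directly with the subquotient $\ov{\g(A)}^{gen}$ and asserts that its Cartan has the same rank as $\h(\widetilde A)$; this is justified only by the remark that $[x,[x,x]]$ itself is odd, which does not obviously preclude the ideal it generates from meeting degree~$0$. You instead run the whole argument with the subalgebra $S\subseteq\ov{\g(A)}$ (where the linear independence of the $\widetilde h_i$ is immediate from Lemma~\ref{bas}), obtain $S=\ov{\g(A)}\cong\g(\widetilde A)$, and only then conclude $\mathfrak i=0$ a posteriori. This is a cleaner way to dispose of the issue you correctly flag. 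The trade-off is that your final step---that $[x,[x,x]]=0$ holds in $\g(\widetilde A)$---is left as an appeal to the fact that the relevant $\g(\widetilde A)$ are the known Elduque--Cunha superalgebras, which is fine in the context of how the theorem is actually applied in \S\ref{char3constructions}, but is not a self-contained argument for general $\widetilde A$.
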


\begin{proof}
  By Lemma \ref{surj}, we know there is a surjective, $\widetilde{Q}$-grading-preserving homomorphism from $\widetilde{\g}(\widetilde{A})^{(1)}$ to $\ov{\g(A)^{(1)}}^{gen}$, and both have a Cartan subalgebra of the same dimension. Therefore, the kernel of this homomorphism is graded and trivially intersects the Cartan subalgebra, and so this gives a well-defined surjection from $\ov{\g(A)^{(1)}}^{gen}$ to $\g(\widetilde{A})^{(1)}$. On the other hand, by the dimension hypothesis, if the dimension of $\ov{\g(A)^{(1)}}^{gen}$ is less than the dimension of $\ov{\g(A)^{(1)}}$, the surjection cannot exist. Therefore, we deduce $\ov{\g(A)^{(1)}}^{gen} = \ov{\g(A)^{(1)}}$ and that this surjection must be an isomorphism.
\end{proof}

\begin{rem}
It would be interesting to see how this theorem can be generalized (see Conjecture \ref{conj}), both for higher characteristic and more general Cartan matrices.
\end{rem}

\subsection{Other Examples of Semisimplification }
In this section, we offer some other examples of semisimplification, both for completeness and to highlight some potential pitfalls. For instance, in the proof of Theorem \ref{maint}, we carefully showed that certain relations were satisfied, although at first glance these were the ``obvious'' relations to be satisfied; in general, this cannot be expected.

\subsubsection{Example 1.} Consider the additive group scheme $\mathbb{G}_a$ over $\mathbb{C}$. The representation category $\Rep \mathbb{G}_a$ is a symmetric tensor category, and its objects are finite-dimensional nilpotent $\mathbb{C}[t]$-modules. Any such module is the direct sum of Jordan blocks, and the tensor product of Jordan blocks is described by the usual Clebsch-Gordan rule. It follows that the semisimplification of this category is $\Rep SL_2(\mathbb{C})$, where the Jordan block of dimension $n$ semisimplifies to the $n$-dimensional $SL_2(\mathbb{C})$-module (we are effectively deleting the maps between Jordan blocks of different sizes and nilpotent endomorphisms, cf. \cite{etingof2019semisimplification}).
\par
Any Lie algebra $\mathfrak{g}$ in $\Rep \mathbb{G}_a$ is therefore a Lie algebra equipped with a nilpotent derivation $d$, and semisimplifying gives a Lie algebra $\ov{\g}$ with an action of $\sll(\C)$. If $\g$ is semisimple, then $d$ is inner and we can write $d(x) = [e,x]$ for some $e \in \g$. By the Jacobson-Morozov lemma, $e$ can be included in an $\sll$-triple $\{e,f,h\}$, and the semisimplification of $\g$ is isomorphic to $\g$ with the action of $\sll$ prescribed by this triple.
\par
On the other hand, if $\g$ is not semisimple, the action of $d$ may not extend to an action of $\sll$ by derivations (even if $d$ is inner). This can result in $\ov{\g}$ not being isomorphic to $\g$ unlike the previous case, and in fact, $\ov{\g}$ may even be abelian. In general, $\ov{\g}$ is the associated graded algebra of $\g$ under the Deligne filtration by $d$, which on a vector space $V$ is defined by 

\[F_k V= \bigoplus_{j-i=k} \ker d^i\cap \im d^j.\] 
If $\ov{\g}$ is semisimple, this filtration extends to a grading by eigenvalues of $h$, so $\mathrm{gr}(\g) = \g$. But when $\ov{\g}$ is not semisimple, it may not extend to such a grading, and hence we may not have such an isomorphism.
\par
Here is an explicit example. Consider the three-dimensional Heisenberg Lie algebra spanned by $x,y,z$ with $z = [x,y]$ the central element. Let $d$ be the derivation given by $\ad x$. This does not extend to an $\sll$-action, and the semisimplification is abelian, which we'll explicitly check: using the notation of Jordan blocks from \S\ref{Verp} in the obvious way for characteristic $0$, we have a copy of $J_1$ spanned by $x$ and a copy of $J_2$ given by $y \mapsto z$. The bracket $[x,y] = z$ and the bracket $[x,z] = 0$ are encapsulated by a morphism $J_1 \otimes J_2 \rightarrow J_2$ which is negligible, so it becomes zero in the semisimplification.

\subsubsection{Example 2 (Duflo-Serganova, cf. \cite{Duflo2008ONAV}).} Consider the representation category $\mathcal{C} = \Rep \mathbb{G}_a^{0|1}$. A Lie algebra in $\mathcal{C}$ is a Lie superalgebra with an odd derivation $d$ such that $d^2 = 0$. The semisimplification of $\mathcal{C}$ is $\sVec_\C$ and the semisimplification of $\g$ is the cohomology of $d$, which is a new Lie superalgebra.
\subsubsection{Example 3 (Entova-Aizenbud and Serganova, cf. \cite{entovaaizenbud2020jacobsonmorozov}).} Consider the representation category $\mathcal{C}$ of the affine supergroup scheme $\mathbb{G}_a^{1|1}$, whose coordinate ring is the symmetric algebra $S((\mathbb{C}^{1|1})^*)$. A Lie algebra $\g$ in $\mathcal{C}$ is a Lie superalgebra with an odd, nilpotent derivation $d$. The semisimplification of $\mathcal{C}$ is $\Rep \mathfrak{osp}_{1|2}$, and $\ov{\g}$ is a Lie superalgebra with an action of $\mathfrak{osp}_{1|2}$. If $\g$ is \textit{quasireductive} (i.e. the even part is reductive and acts semisimply on the odd part) and $d = \ad e$ is inner, where $e$ is a \textit{neat element} (i.e. $[e,e]$ acts as the sum of odd-dimensional Jordan blocks on every finite-dimensional $\g$-module), then it is shown that $\ov{\g} = \g$ and $e$ extends to an action of $\mathfrak{osp}_{1|2}$. However, this is nontrivial and only true under these conditions. This is in a sense a super analog of Example 1 above.

\section{Exceptional Simple Lie Superalgebras in Characteristic $3$}\label{char3constructions}
In this section, we use Theorem \ref{maint} above to construct exceptional simple Lie superalgebras via semisimplification, except in two cases. We are able to construct all ten simple Elduque and Cunha Lie superalgebras, the Brown Lie superalgebra $\mathfrak{brj}(2; 3)$ and the Elduque Lie superalgebra $\mathfrak{el}(5; 3)$. We will use the notation $\g(a, b)$ to denote  the Lie superalgebra occupying the $(a, b)$-th slot in the Elduque Supermagic Square (cf. \cite{bouarroudj2009classification, cunha2007extended, elduque2006new, cunha2006extended}).
\par
For completeness, we include a description of the even part and odd part of each Lie superalgebra. The bracket makes the odd part a module over the even part. Each even part is either a contragredient Lie algebra $\g(C)$ with Cartan matrix $C$ or it is the quotient of $\g(C)^{(1)}$ by its center. If the Cartan matrix $C$ of the even part is invertible, we will use the notation $L(\omega_i)$ to denote the Weyl module whose highest weight is a fundamental weight $\omega_i$, whose labels will follow that of Bourbaki (cf. \cite{bourbaki2008lie} and \cite{jantzen2003representations} for a definition of a Weyl module). The exception to this will be the tautological module $\KK^n = L(\omega_1)$ over $\mathfrak{sl}_n$. If $C$ is not invertible, then we will explicitly describe the module. 

\subsection{Connection to Prior Constructions}
Before we show how to construct the Lie superalgebras above, we describe a setup already known in the literature that is closely connected to semisimplification. This serves as motivation as to why semisimplification might produce many of the exceptional Lie superalgebras. In \cite{elduque2006new} and \cite{elduque2009models}, a procedure is described by which one can start with a so called \textit{symplectic triple system} $T$ over a field $\KK$ to produce a Lie algebra $\g$ containing a subalgebra $\mathfrak{s}$ isomorphic to $\sll$, such that $\g = D \oplus \KK^2 \otimes T \oplus \mathfrak{s}$. Here $D$ is the centralizer of $\mathfrak{s}$ which acts on $T$, and $T$ is the multipicity space of the tautological module $\KK^2$ over $\sll$. The triple product in $T$ induces a $D$-invariant symmetric bilinear map $T \times T \rightarrow D$ giving the bracket in $\g$. Moreover, $\g$ is a Lie algebra in $\Rep_\KK \sll$. Suppose now that the characteristic of $\KK$ is $p = 3$. Assembling this data together gives a Lie superalgebra $\widetilde{\g} = D \oplus T$. 
\par
On the other hand, if one forgets the action of $f,h$ after fixing an isomorphism $\sll \rightarrow \mathfrak{s}$, we can realize $\g$ as an object in $\Rep \bal_3$ with respect to the action of $e$ (here $\{e,f,h\}$ is the usual basis of $\sll$). Then, if we semisimplify, we get precisely the Lie superalgebra described above (essentially by definition, as relations in $\Ver_p$ boil down to linear algebra), up to a natural isomorphism.
\par
This method is how some of the Elduque and Cunha Lie superalgebras were first constructed, namely $\g(1, 6), \g(2, 6), \g(4, 6)$, and $\g(8, 6)$. Also, $\mathfrak{brj}_{2;3}$ can be constructed this way. For these Lie superalgebras, this reflects what is going on behind the scenes when phrased using the language of symmetric tensor categories.
\par
All of the Elduque and Cunha Lie superalgebras can be constructed using the Elduque Supermagic square. The authors of \cite{elduque2022} give conceptual reasoning as to why this method and semisimplification are related . In particular, Lie superalgebras in the Elduque Supermagic Square can be obtained by semisimplifying exceptional Lie algebras (realized as Lie algebras in $\Rep \Z/3Z$) in the fourth row of Freudenthal's Magic Square (c.f. section $4$ in \cite{elduque2022}).

\subsection{Constructing $\mathfrak{brj}_{2;3}$ from $\mathfrak{br}_3$}
In this section, we will construct the Brown Lie superalgebra $\mathfrak{brj}_{2; 3}$. The ``$3$" in the index of $\mathfrak{brj}_{2;3}$ is used to distinguish it from its characteristic $5$ analog, which we do not discuss in this paper. This is a simple contragredient Lie superalgebra of superdimension $(10|8)$ with a Cartan matrix of full rank and parity set:

\begin{align*}
  \widetilde{A} = \begin{pmatrix}
    0 & -1 \\
    -1 & \ov{0}
  \end{pmatrix}; \ \ \ I = \{1,0\}.
\end{align*}
The even part of $\mathfrak{brj}_{2;3}$ is the $10$-dimensional rank $2$ Brown Lie algebra $\mathfrak{br}_2$, which is simple and unique to characteristic $3$. It has Cartan matrix

\begin{align*}
  \begin{pmatrix}
     2 & -1 \\
     -1 & \ov{0}
  \end{pmatrix}; \ \ \ I = \{0,0\}.
\end{align*}
The odd part is the $8$-dimensional simple module $L(2\omega_1)$ over $\mathfrak{br}_2$. To apply our main theorem, we consider the Lie algebra with Cartan matrix

\begin{align*}
  A = \begin{pmatrix}
    2 & -1 & 0 \\
    -1 & 2 & -1 \\
    0 & -1 & \ov{0}
  \end{pmatrix}; \ \ \ I = \{0,0,0\}.
\end{align*}
This is the full-rank Cartan matrix of the rank $3$ Brown Lie algebra $\mathfrak{br}_3$, which is $29$-dimensional, simple, and unique to characteristic $3$. The Lie algebra $\mathfrak{br}_3$ has the following Dynkin diagram, labeled in accordance with the Cartan matrix:

\[    \begin{dynkinDiagram}[labels={1,2,3},scale=4, mark=o, label distance = 5pt]A3  
  \fill[black] (root 3) circle (0.01cm);
  \end{dynkinDiagram}\]
Here, we use a special node to indicate the node corresponding to the last index, as $b_{33} = 0$ but the index $3$ is even. The Lie algebra $\mathfrak{br}_3$ can be realized as an object in $\Rep \bal_3$ with respect to the adjoint action of $e_1$ and decomposes as $10J_1 \oplus 8 J_2 \oplus J_3$, which can be checked using the software $SuperLie$. Therefore, comparing dimensions, by Theorem \ref{maint}, we have:

\begin{cor}
  The semisimplification of $\mathfrak{br}_3$ as an object in $\Rep \bal_3$ under the adjoint action of $e_1$ is $\mathfrak{brj}_{2;3}$.
\end{cor}

\subsection{Constructing $\g(1, 6)$ from $\ff$}
In this section, we will construct the Elduque and Cunha Lie superalgebra $\g(1, 6)$. This is a simple contragredient Lie superalgebra of superdimension $(21|14)$ with a Cartan matrix of full rank and parity set:

\begin{align*}
  \widetilde{A} = \begin{pmatrix}
    2 & -1 & 0 \\
    -1 & 2 & -2 \\
    0 & -1 & 0
  \end{pmatrix}; \ \ \ I = \{0,0,1\}.
\end{align*}
The even part of $\g(1, 6)$ is $\mathfrak{sp}_6$, and the odd part is its $14$-dimensional simple module $L(\omega_3)$. To apply our main theorem, we consider the Lie algebra with Cartan matrix

\begin{align*}
  A = \begin{pmatrix}
    2 & -1 & 0 & 0 \\
    -1 & 2 & -2 & 0 \\
    0 & -1 & 2 & -1 \\
    0 & 0 & -1 & 2
  \end{pmatrix}.
\end{align*}
This is the full-rank Cartan matrix of the $52$-dimensional simple Lie algebra $\ff$. The Lie algebra $\ff$ has the following Dynkin diagram, labeled in accordance with the Cartan matrix:

\[\dynkin[labels={1,2,3,4},scale=4, arrow shape/.style={-{jibladze[length=7pt]}}, label distance = 5pt]  F{oooo}\]
The Lie algebra $\ff$ can be realized as an object in $\Rep \bal_3$ with respect to the adjoint action of $e_4$ and decomposes as $21J_1 \oplus 14 J_2 \oplus J_3$, which can be checked using the software $SuperLie$. Therefore, comparing dimensions, by Theorem \ref{maint}, we have:

\begin{cor}
  The semisimplification of $\ff$ as an object in $\Rep \bal_3$ under the adjoint action of $e_4$ is $\g(1, 6)$.
\end{cor}

\subsection{Lie Superalgebras Arising from $\esi$}
In characteristic $3$, the Lie algebra $\esi = \g(A)$ is $79$-dimensional and has Cartan matrix:

\begin{equation}\label{esicart}
  A = \begin{pmatrix}
    2 & 0 & -1 & 0 & 0 & 0 \\
    0 & 2 & 0 & -1 & 0 & 0\\
    -1 & 0 & 2 & -1 & 0 & 0\\
    0 & -1 & -1 & 2 & -1 & 0 \\
    0 & 0 & 0 & -1 & 2 & -1 \\
    0 & 0 & 0 & 0 & -1 & 2
  \end{pmatrix}.
\end{equation}
In particular, in characteristic $3$, $A$ has rank $5$. Therefore, $\g(A) \neq \g(A)^{(1)}$ as discussed in \S\ref{contraglie}. The Lie subalgebra $\g(A)^{(1)}$ is a $78$-dimensional Lie algebra with one-dimensional center $\mathfrak{z}$ in the Cartan subalgebra; quotienting out by the center gives a $77$-dimensional simple Lie algebra.
\par
The Lie algebra $\esi$ has the following Dynkin diagram, labeled in accordance with the Cartan matrix:
\[\dynkin[labels={1,2,3,4,5,6},scale=4, label distance = 5pt]  E{oooooo}\]

\subsubsection{Constructing $\g(2, 3)$}
In this section, we will construct the derived algebra of the Elduque and Cunha Lie superalgebra $\g(2, 3) = \g(\widetilde{A})$. The Lie superalgebra $\g(2, 3)$ has Cartan matrix and parity set:

\begin{align*}
  \widetilde{A} = \begin{pmatrix}
    0 & -1 & 0 \\
    -1 & 0 & -1 \\
    0 & -1 & 0
  \end{pmatrix}; \ \ \ I = \{1,1,1\}.
\end{align*}
Like $A$ above in \eqref{esicart}, $\widetilde{A}$ does not have full rank. Using SuperLie, we can check that $\g(\widetilde{A})^{(1)}$ has superdimension $(11|14)$ and a one-dimensional center $\mathfrak{c}$ lying in the Cartan subalgebra. Quotienting out by this center gives a simple Lie superalgebra. The even part of $\g(2, 3)^{(1)}/\mathfrak{c}$ is $\mathfrak{psl}_3\oplus \sll$, and its odd part is the $\mathfrak{psl}_3\otimes \KK^2$; here $\mathfrak{psl}_3$ acts on $\mathfrak{psl}_3$ by the adjoint representation and $\KK^2$ is the tautological $\sll$-module.
\par
By comparing Cartan matrices, we realize $\g(A)^{(1)}$ as an object in $\Rep \bal_3$ with respect to $e_1 + e_2 + e_6$, where it decomposes as $\g(A)^{(1)} = 11J_1 \oplus 14J_2 \oplus 13J_3 $. Then, the hypothesis of Theorem \ref{maint} is satisfied, and we deduce that the semisimplification of $\g(A)^{(1)}$ with respect to $e_1 + e_2 + e_6$ is $\g(\widetilde{A})^{(1)}$. We can then mod out by the centers to deduce the following:

\begin{cor}
  The semisimplification of the simple Lie algebra $\esi^{(1)}/\mathfrak{z}$ as an object in $\Rep \bal_3$ under the adjoint action of $e_1 + e_2 + e_6$ is $\g(2, 3)^{(1)}/\mathfrak{c}$.
\end{cor}
\subsubsection{Constructing $\g(3, 3)$}
In this section, we will construct the derived algebra of the Elduque and Cunha Lie superalgebra $\g(3, 3)$.  The Lie superalgebra $\g(3, 3)$ has Cartan matrix and parity set:

\begin{align*}
  \widetilde{A} = \begin{pmatrix}
    0 & -1 & 0  & 0\\
    -1 & 0 & -1 & 0 \\
    0 & -1 & 2 & -1 \\
    0 & 0 & -1 & 2
  \end{pmatrix}; \ \ \ I = \{0,0,1,1\}.
\end{align*}
Like $A$ above in \eqref{esicart}, $\widetilde{A}$ does not have full rank. Using SuperLie, we can check that $\g(\widetilde{A})^{(1)}$ has superdimension $(22|16)$ and a one-dimensional center $\mathfrak{c}$ lying in the Cartan subalgebra. Quotienting out by this center gives a simple Lie superalgebra. The even subalgebra of $\g(3, 3)^{(1)}/\mathfrak{c}$ is $\mathfrak{o}_7$, and the odd part, as a module over $\mathfrak{o}_7$, is $L(\omega_3) \oplus L(\omega_3)$, where in particular $L(\omega_3)$ is the $8$-dimensional spinor module over $\mathfrak{o}_7$.
\par
By comparing Cartan matrices, we realize $\g(A)^{(1)}$ as an object in $\Rep \bal_3$ with respect to $e_1 + e_2$, where it decomposes as $\g(A)^{(1)} = 22J_1 \oplus 16J_2 \oplus 8J_3$. Then, the hypothesis of Theorem \ref{maint} is satisfied, and we deduce that the semisimplification of $\g(A)^{(1)}$ with respect to $e_1 + e_2$ is $\g(\widetilde{A})^{(1)}$. We can then mod out by the centers to deduce the following:

\begin{cor}
  The semisimplification of $\esi^{(1)}/\mathfrak{z}$ as an object in $\Rep \bal_3$ under the adjoint action of $e_1 + e_2$ is $\g(3, 3)^{(1)}/\mathfrak{c}$.
\end{cor}

\subsubsection{Constructing $\g(2, 6)$}
In this section, we will construct the derived algebra of the Elduque and Cunha Lie superalgebra $\g(2, 6)$. The Lie superalgebra $\g(2, 6)$ has Cartan matrix and parity set:

\begin{align*}
  \widetilde{A} = \begin{pmatrix}
    2 & -1 & 0 & 0 & 0\\
    -1 & 2 & -1 & 0 & 0 \\
    0 & -1 & 0 & -1 & 0 \\
    0 & 0 & -1 & 2 & -1 \\
    0 & 0 & 0 & -1 & 2
  \end{pmatrix}; \ \ \ I = \{0,0,1,0,0\}.
\end{align*}
Like $A$ above in \eqref{esicart}, $\widetilde{A}$ does not have full rank. Using SuperLie, we can check that $\g(\widetilde{A})^{(1)}$ has superdimension $(35|20)$ and a one-dimensional center $\mathfrak{c}$ lying in the Cartan subalgebra. Quotienting out by this center gives a simple Lie superalgebra. The even subalgebra of $\g(2, 6)^{(1)}/\mathfrak{c}$ is $\mathfrak{psl}_6$ and the odd part is the $20$-dimensional simple module $\bigwedge^3(\KK^6)$, which is the third exterior power of the tautological module $\KK^6$ over $\mathfrak{sl}_6$. The action of $\mathfrak{psl}_6$ on $\bigwedge^3(\KK^6)$ is given as follows. If $Z$ is the one-dimensional center of $\mathfrak{sl}_6$ in characteristic $3$, then for each $x + Z \in \mathfrak{psl}_6$, choose a suitable lift $\widehat{x}$ in $\mathfrak{sl}_6$, and define $(x+Z)\cdot v = \widehat{x}v$ for all $v \in \KK^6$, where the RHS is usual matrix multiplication. Then, on $\bigwedge^3(\KK^6)$, we have 

\[(x+Z)\cdot(v_1 \wedge v_2 \wedge v_3) \coloneqq \widehat{x}v_1 \wedge v_2 \wedge v_3 + v_1 \wedge \widehat{x}v_2 \wedge v_3 +  v_1 \wedge v_2 \wedge \widehat{x}v_3\]
for any $v_1,v_2,v_3 \in \KK^6$. This defines a well-defined Lie algebra action of $\mathfrak{psl}_6$ on $\bigwedge^3(\KK^6)$, because if $\widehat{y}$ is another lift of $x+Z$, then $\widehat{x} - \widehat{y}$ is central, so it acts as a scalar on $\KK^6$. It follows that the action on $\bigwedge^3(\mathbb{K}^6)$ will then differ by three times this scalar, which in characteristic $3$ is zero.
\par
By comparing Cartan matrices, we realize $\g(A)^{(1)}$ as an object in $\Rep \bal_3$ with respect to $e_2$, where it decomposes as $\g(A)^{(1)} = 35J_1 \oplus 20J_2 \oplus J_3$. Then, the hypothesis of Theorem \ref{maint} is satisfied, and we deduce that the semisimplification of $\g(A)^{(1)}$ with respect to $e_2$ is $\g(\widetilde{A})^{(1)}$. We can then mod out by the centers to deduce the following:

\begin{cor}
  The semisimplification of $\esi^{(1)}/\mathfrak{z}$ as an object in $\Rep \bal_3$ under the adjoint action of $e_2$ is $\g(2, 6)^{(1)}/\mathfrak{c}$.
\end{cor}

\subsection{Lie Superalgebras Arising from $\ese$}
Recall that $\ese$ is the $133$-dimensional simple Lie algebra with Cartan matrix:

\begin{align*}
  A = \begin{pmatrix}
    2 & 0 & -1 & 0 & 0 & 0 & 0\\
    0 & 2 & 0 & -1 & 0 & 0 & 0\\
    -1 & 0 & 2 & -1 & 0 & 0 & 0\\
    0 & -1 & -1 & 2 & -1 & 0 & 0 \\
    0 & 0 & 0 & -1 & 2 & -1 & 0\\
    0 & 0 & 0 & 0 & -1 & 2 & -1 \\
    0 & 0 & 0 & 0 & 0 & -1 & 2
  \end{pmatrix}.
\end{align*}
It has the following Dynkin diagram, labeled in accordance with the Cartan matrix:
\[\dynkin[labels={1,2,3,4,5,6,7},scale=4, label distance = 5pt]  E{ooooooo}\]

\subsubsection{Constructing $\g(4, 3)$}
In this section, we will construct the Elduque and Cunha Lie superalgebra $\g(4, 3)$. This is a simple contragredient Lie superalgebra with a Cartan matrix and parity set:

\begin{align*}
  \widetilde{A} = \begin{pmatrix}
    0 & -1 & 0 & 0 \\
    -1 & 0 & -1 & 0  \\
    0 & -1 & 2 & -1 \\
    0 & 0 & -1 & 0
  \end{pmatrix}; \ \ \ I = \{1,1,0,1\}.
\end{align*}
This is a Lie superalgebra of superdimension $(24|26)$. It has an even subalgebra $\mathfrak{sp}_6 \oplus \sll$ and the module $L(\omega_2)'\otimes \KK^2$ over the even subalgebra is its odd part. Here, $L(\omega_2)$ is the $14$-dimensional Weyl module over $\mathfrak{sp}_6$ of highest weight $\omega_2$, which contains a copy of the trivial module as a submodule. We let $L(\omega_2)'$ denote the $13$-dimensional simple module which is the quotient of $L(\omega_2)$ by this submodule, and $\KK^2$ is the tautological $\sll$-module. 
\par
By comparing Cartan matrices, we realize $\ese$ as an object in $\Rep \bal_3$ with respect to $e_1 + e_2 + e_7$, where it decomposes as $\ese = 24J_1 \oplus 26J_2 \oplus 19J_3 $. Then, the hypothesis of Theorem \ref{maint} is satisfied, and we have:

\begin{cor}
  The semisimplification of $\ese$ as an object in $\Rep \bal_3$ under the adjoint action of $e_1  + e_2 + e_7$ is $\g(4, 3)$.
\end{cor}

\subsubsection{Constructing $\mathfrak{el}(5; 3)$}
In this section, we will construct the Elduque Lie superalgebra $\mathfrak{el}(5; 3)$. This is a simple contragredient Lie superalgebra with a Cartan matrix and parity set:

\begin{align*}
  \widetilde{A} = \begin{pmatrix}
  2 & 0 & -1 & 0 & 0 \\
  0 & 0 & -1 & 0 & 0  \\
  -1 & -1 & 2 & -1 & 0 \\
  0 & 0 & -1 & 2 & -1 \\
  0 & 0 & 0 & -1 & 0 \\
  \end{pmatrix}; \ \ \ I = \{0,1,0,0,1\}.
\end{align*}
This is a Lie superalgebra of superdimension $(39|32)$. Its even subalgebra is $\mathfrak{o}_{9} \oplus \sll$, and its odd part is $L(\omega_4) \otimes \KK^2$, where $L(\omega_4)$ is the $16$-dimensional simple Weyl module over $\mathfrak{o}_9$ of highest weight $\omega_4$ and $\KK^2$ is the tautological $\sll$-module. By comparing Cartan matrices, we realize $\ese$ as an object in $\Rep \bal_3$ with respect to $e_1 + e_7$, where it decomposes as $\ese = 39J_1 \oplus 32J_2 \oplus 10J_3 $. Then, the hypothesis of Theorem \ref{maint} is satisfied, and we have:

\begin{cor}
  The semisimplification of $\ese$ as an object in $\Rep \bal_3$ under the adjoint action of $e_1 +e_7$ is $\mathfrak{el}(5; 3)$.
\end{cor}

\subsubsection{Constructing $\g(4, 6)$}
In this section, we will construct the Elduque and Cunha Lie superalgebra $\g(4, 6)$. This is a simple contragredient Lie superalgebra with a Cartan matrix and parity set:

\begin{align*}
  \widetilde{A} = \begin{pmatrix}
  2 & 0 & -1 & 0 & 0 & 0 \\
  0 & 0 & -1 & 0 & 0 & 0 \\
  -1 & -1 & 2 & -1 & 0 & 0 \\
  0 & 0 & -1 & 2 & -1 & 0 \\
  0 & 0 & 0 & -1 & 2 & -1 \\
  0 & 0 & 0 & 0 & -1 & 2
  \end{pmatrix}; \ \ \ I = \{0,1,0,0,0,0\}.
\end{align*}
This is a Lie superalgebra of superdimension $(66|32)$. Its even subalgebra is $\mathfrak{o}_{12}$, and its odd part is the $32$-dimensional simple Weyl module $L(\omega_5)$. By comparing Cartan matrices, we realize $\ese$ as an object in $\Rep \bal_3$ with respect to $e_1$, where it decomposes as $\ese = 66J_1 \oplus 32J_2 \oplus J_3 $. Then, the hypothesis of Theorem \ref{maint} is satisfied, and we have:

\begin{cor}
  The semisimplification of $\ese$ as an object in $\Rep \bal_3$ under the adjoint action of $e_1$ is $\g(4, 6)$.
\end{cor}

\subsection{Lie Superalgebras Arising From $\ee$}
Recall that $\ee$ is the $248$-dimensional simple Lie algebra with Cartan matrix:

\begin{align*}
  A = \begin{pmatrix}
    2 & 0 & -1 & 0 & 0 & 0 & 0 & 0\\
    0 & 2 & 0 & -1 & 0 & 0 & 0 & 0\\
    -1 & 0 & 2 & -1 & 0 & 0 & 0 & 0\\
    0 & -1 & -1 & 2 & -1 & 0 & 0 & 0\\
    0 & 0 & 0 & -1 & 2 & -1 & 0 & 0\\
    0 & 0 & 0 & 0 & -1 & 2 & -1 & 0 \\
    0 & 0 & 0 & 0 & 0 & -1 & 2 & -1 \\
    0 & 0 & 0 & 0 & 0 & 0 & -1 & 2
  \end{pmatrix}.
\end{align*}
It has the following Dynkin diagram, labeled in accordance with the Cartan matrix:
\[\dynkin[labels={1,2,3,4,5,6,7,8},scale=4, label distance = 5pt]  E{oooooooo}\]
\subsubsection{Constructing $\g(8, 3)$}
In this section, we will construct the Elduque and Cunha Lie superalgebra $\g(8, 3)$. This is a simple contragredient Lie superalgebra with a Cartan matrix and parity set:

\begin{align*}
  \widetilde{A} = \begin{pmatrix}
  0 & -1 & 0 & 0 & 0  \\
  -1 & 0 & -1 & 0 & 0 \\
  0 & -1 & 2 & -1 & 0 \\
  0 & 0 & -1 & 2 & -1 \\
  0 & 0 & 0 & -1 & 0 \\
  \end{pmatrix}; \ \ \ I = \{1,1,0,0,1\}.
\end{align*}
This is a Lie superalgebra of superdimension $(55|50)$. It has an even subalgebra $\mathfrak{f}_4 \oplus \sll$ and the module $L(\omega_4)'\otimes \KK^2$ over the even subalgebra is its odd part. Here, $L(\omega_4)$ is the $26$-dimensional Weyl module over $\ff$ of highest weight $\omega_4$, which contains a copy of the trivial module as a submodule. We let $L(\omega_4)'$ denote the $25$-dimensional simple module which is the quotient of $L(\omega_4)$ by this submodule, and $\KK^2$ is the tautological $\sll$-module.
\par
By comparing Cartan matrices, we realize $\ee$ as an object in $\Rep \bal_3$ with respect to $e_1 + e_2 + e_8$, where it decomposes as $\ee = 55J_1 \oplus 50J_2 \oplus 31J_3$. Then, the hypothesis of Theorem \ref{maint} is satisfied, and we have:

\begin{cor}
  The semisimplification of $\ee$ as an object in $\Rep \bal_3$ under the adjoint action of $e_1 + e_2 + e_8$ is $\g(8, 3)$.
\end{cor}

\subsubsection{Constructing $\g(6, 6)$}
In this section, we will construct the Elduque and Cunha Lie superalgebra $\g(6, 6)$. This is a simple contragredient Lie superalgebra with a Cartan matrix and parity set:

\begin{align*}
  \widetilde{A} = \begin{pmatrix}
  0 & -1 & 0 & 0 & 0 & 0\\
  -1 & 0 & -1 & 0 & 0 & 0\\
  0 & -1 & 2 & -1 & 0 & 0\\
  0 & 0 & -1 & 2 & -1 & 0 \\
  0 & 0 & 0 & -1 & 2 & -1 \\
  0 & 0 & 0 & 0 & -1 & 2
  \end{pmatrix}; \ \ \ I = \{1,1,0,0,0,0\}.
\end{align*}
This is a Lie superalgebra of superdimension $(78|64)$. Its even subalgebra is $\mathfrak{o}_{13}$, and its odd part is the $64$-dimensional simple spinor module $L(\omega_6)$. By comparing Cartan matrices, we realize $\ee$ as an object in $\Rep \bal_3$ with respect to $e_1 + e_2$, where it decomposes as $\ee = 78J_1 \oplus 64J_2 \oplus 14J_3$. Then, the hypothesis of Theorem \ref{maint} is satisfied, and we have:

\begin{cor}
  The semisimplification of $\ee$ as an object in $\Rep \bal_3$ under the adjoint action of $e_1 + e_2$ is $\g(6, 6)$.
\end{cor}

\subsubsection{Constructing $\g(8, 6)$}
In this section, we will construct the Elduque and Cunha Lie superalgebra $\g(8, 6)$. This is a simple contragredient Lie superalgebra with a Cartan matrix and parity set:
\begin{align*}
  \widetilde{A} = \begin{pmatrix}
  2 & 0 & -1 & 0 & 0 & 0 & 0\\
  0 & 0 & -1 & 0 & 0 & 0 & 0\\
  -1 & -1 & 2 & -1 & 0 & 0 & 0\\
  0 & 0 & -1 & 2 & -1 & 0 & 0\\
  0 & 0 & 0 & -1 & 2 & -1 & 0 \\
  0 & 0 & 0 & 0 & -1 & 2 & -1 \\
  0 & 0 & 0 & 0 & 0 & -1 & 2
  \end{pmatrix}; \ \ \ I = \{0,1,0,0,0,0,0\}.
\end{align*}
This is a Lie superalgebra of superdimension $(133|56)$. Its even subalgebra is $\ese$, and its odd part is the $56$-dimensional simple Weyl module $L(\omega_1)$. By comparing Cartan matrices, we realize $\ee$ as an object in $\Rep \bal_3$ with respect to $e_1$, where it decomposes as $\ee = 133J_1 \oplus 56J_2 \oplus J_3$. Then, the hypothesis of Theorem \ref{maint} is satisfied, and we have:

\begin{cor}
  The semisimplification of $\ee$ as an object in $\Rep \bal_3$ under the adjoint action of $e_1$ is $\g(8, 6)$.
\end{cor}

\subsubsection{Constructing $\g(3, 6)$}\label{altconst}
In this section, we will construct the Elduque and Cunha Lie superalgebra $\g(3, 6)$. This is a simple contragredient Lie superalgebra with a Cartan matrix and parity set:

\begin{align*}
  \widetilde{A} = \begin{pmatrix}
  0 & -1 & 0 & 0 \\
  -1 & 0 & -1 & 0 \\
  0 & -1 & 0 & -2 \\
  0 & 0 & - 1 & 2
  \end{pmatrix}; \ \ \ I = \{1,1,1,0\}.
\end{align*}
This is a Lie superalgebra of superdimension $(36|40)$. Its even subalgebra is $\mathfrak{sp}_8$, and its odd part is the $40$-dimensional simple module $L(\omega_3)'$. This module is the quotient of the $48$-dimensional Weyl module $L(\omega_3)$ by the $8$-dimensional tautological module over $\mathfrak{sp}_8$. We can construct this Lie superalgebra from $\ee$, but this will slightly differ from the main approach above. Let $x = e_1 + e_2 + e_6 + e_8$. Then, it is easily checked that with respect to the adjoint action of $x$, $\ee$ is an object in $\Rep \bal_3$, where it decomposes as $\ee = 36J_1 + 40J_2 + 44J_3$. We will show that:

\begin{thm}
  The semisimplification of $\ee$ as an object in $\Rep \bal_3$ under the adjoint action of $e_1 + e_2 + e_6 + e_8$ is $\g(3, 6)$.
\end{thm}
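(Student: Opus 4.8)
The plan is to adapt the proof of Theorem \ref{maint}: although this case falls outside its literal hypotheses — the summand $e_6$ of $x=e_1+e_2+e_6+e_8$ is an interior node, and nodes $6$ and $8$ share the neighbour $7$ — the same three-step strategy (realize $\ee$ in $\Rep\bal_3$, build an adapted Jordan basis, check the contragredient relations) will work. First I would check $(\ad x)^3=0$: since $\{1,2,6,8\}$ are pairwise non-adjacent in the $E_8$ diagram, the Serre relations give $[e_i,e_j]=0$ for distinct $i,j$ in this set, so $\ad e_1,\ad e_2,\ad e_6,\ad e_8$ commute; each is nilpotent of degree $3$ (the maximal $\ad h_i$-eigenvalue on $\ee$ being $2$); and $(\ad e_i)^2(\ad e_j)=0$ because $\alpha_i+\alpha_j$ is never a root. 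Expanding $(\ad x)^3$ multinomially, every monomial with a repeated factor vanishes by these facts, while each monomial $(\ad e_i)(\ad e_j)(\ad e_k)$ with $i,j,k$ distinct carries the coefficient $\binom{3}{1,1,1}=6\equiv 0\pmod 3$; hence $(\ad x)^3=0$. One then verifies (with SuperLie, or via the block analysis below) that $\ee=36J_1\oplus 40J_2\oplus 44J_3$, so $\sdim\ov{\ee}=(36\mid 40)=\sdim\g(3,6)$; as in Theorem \ref{maint} it therefore suffices to produce generators of $\ov{\ee}$ satisfying the relations \eqref{rels} for the Cartan matrix and parity set $(A,I)$ of $\g(3,6)$, together with a Cartan subalgebra of the same rank.

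\textbf{An adapted Jordan basis (analogue of Lemma \ref{bas}).} Extend $x$ to an $\mathfrak{sl}_2$-triple with $f=f_1+f_2+f_6+f_8$, $h=h_1+h_2+h_6+h_8$, and isolate the following split summands: four $J_3$'s $f_r\mapsto h_r\mapsto -2e_r$ for $r\in\{1,2,6,8\}$; three ``unramified'' $J_2$'s $e_3\mapsto[e_1,e_3]$, $e_4\mapsto[e_2,e_4]$, $e_5\mapsto[e_6,e_5]$ and their $f$-mirrors $[f_1,f_3]\mapsto f_3$, $[f_2,f_4]\mapsto f_4$, $[f_6,f_5]\mapsto f_5$; four Cartan $J_1$'s spanned by $h_3-h_1$, $h_4-h_2$, $h_5-h_6$, $h_7-h_6-h_8$ (the last three annihilated by $\ad x$ because $3\equiv0$); and the new ingredient, the four-dimensional ``fork'' at node $7$, namely $\langle e_7,\,[e_6,e_7],\,[e_8,e_7],\,[e_6,[e_8,e_7]]\rangle$, on which $\ad x$ has rank $2$ with nonzero square and therefore splits as $J_3\oplus J_1$ with $J_3=\bigl(e_7\mapsto [e_6,e_7]+[e_8,e_7]\mapsto 2[e_6,[e_8,e_7]]\bigr)$ and $J_1$ spanned by $[e_6,e_7]-[e_8,e_7]$ (and symmetrically on the $f$-side). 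The remaining root spaces are grouped into Jordan blocks by the height/$\mathfrak{sl}_2$ argument of Lemma \ref{bas} applied to the complement — that proof never used simplicity of $\g(A)$ — the only change being to loosen ``$|i|>2$'' to ``$|i|>3$'' to accommodate the height-$3$ top of the fork. In $\ov{\ee}$ the classes of $e_r,f_r,h_r$ $(r\in\{1,2,6,8\})$ and of $e_7,f_7$ lie inside vanishing $J_3$'s, the Cartan of $\ov{\ee}$ is $\bigl\langle\ov{h_3-h_1},\ov{h_4-h_2},\ov{h_5-h_6},\ov{h_7-h_6-h_8}\bigr\rangle$, and the distinguished generators are $\widetilde e_1=\ov{e_3\mapsto[e_1,e_3]}$, $\widetilde e_2=\ov{e_4\mapsto[e_2,e_4]}$, $\widetilde e_3=\ov{e_5\mapsto[e_6,e_5]}$ (odd) and $\widetilde e_4=\ov{[e_6,e_7]-[e_8,e_7]}$ (even), with $\widetilde f_i$ the mirrors and $\widetilde h_i=[\widetilde e_i,\widetilde f_i]$. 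Although $\widetilde e_4$ is not homogeneous for the $E_8$ root lattice, it is homogeneous for the quotient $Q(E_8)/\langle\alpha_1,\alpha_2,\alpha_6,\alpha_8\rangle$, which is free of rank $4$ and makes all $\widetilde e_i,\widetilde f_i,\widetilde h_i$ homogeneous; this quotient grading plays the role of the root lattice of $\g(3,6)$.

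\textbf{Relations and conclusion.} Using Proposition \ref{strucconsts} — ordinary brackets between $J_1$-classes, the $J_1$-against-$J_2$ rule, and the case-(2) formula $\ov{-[x_i,x_j']+[x_i',x_j]}$ for two $J_2$-classes — I would check that $\{\widetilde e_i,\widetilde f_i,\widetilde h_i\}_{i=1}^4$ satisfy \eqref{rels} for $(A,I)$: the entries $\widetilde a_{12}=\widetilde a_{23}=-1$ and the diagonal zeros $\widetilde a_{11}=\widetilde a_{22}=\widetilde a_{33}=0$ emerge exactly as in case (1) of Lemma \ref{surj} (the characteristic-$3$ cancellation producing an $\ov{h_j-h_i}$-type Cartan class), while $\widetilde a_{44}=2$ is immediate since $\widetilde e_4$ is a $J_1$-class; the double bond $\widetilde a_{34}=-2,\ \widetilde a_{43}=-1$ is where the fork pays off, since $h_5-h_6$ acts on $[e_6,e_7]$ and $[e_8,e_7]$ with eigenvalues $-2$ and $1$ (equal mod $3$), giving $[\widetilde h_3,\widetilde e_4]=-2\widetilde e_4$, whereas $\widetilde h_4=[\widetilde e_4,\widetilde f_4]=\ov{h_6+h_8+2h_7}=2\,\ov{h_7-h_6-h_8}$ carries an extra factor $2$, giving $[\widetilde h_4,\widetilde e_3]=-\widetilde e_3$. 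This yields a surjection from $\widetilde{\g}(A)$ onto the subalgebra of $\ov{\ee}$ generated by the $\widetilde e_i,\widetilde f_i,\widetilde h_i$; its kernel is graded for the rank-$4$ quotient grading and meets the Cartan trivially, so after also imposing $[z,[z,z]]=0$ for odd $z$ (as in Theorem \ref{maint}) the subalgebra is a homomorphic image of $\g(A)=\g(3,6)$, and comparing with $\sdim\ov{\ee}=\sdim\g(3,6)$ forces it to be all of $\ov{\ee}$ and the map to be an isomorphism.

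\textbf{Main obstacle.} The crux is the node-$7$ fork: one must confirm $[e_6,[e_8,e_7]]\neq0$ so that the four-dimensional fork space splits as $J_3\oplus J_1$ rather than $2J_2$, match the surviving $J_1$ with the even Chevalley generator of $\g(3,6)$, and push the handful of explicit structure-constant computations in $\ee$ near the roots $\alpha_5,\alpha_6,\alpha_7,\alpha_8$ through the characteristic-$3$ arithmetic so that the double-bond relations come out correctly. Everything else is a faithful transcription of the proofs of Lemmas \ref{bas} and \ref{surj} and of Theorem \ref{maint}; SuperLie is the natural tool for the full block decomposition and for these structure-constant checks.
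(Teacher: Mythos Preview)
Your approach is essentially identical to the paper's: both isolate the ``fork'' at node $7$ as the one feature outside the hypotheses of Theorem~\ref{maint}, split $\langle e_7,[e_6,e_7],[e_8,e_7],[e_6,[e_8,e_7]]\rangle$ as $J_3\oplus J_1$, take the surviving $J_1$-class (your $\ov{[e_6,e_7]-[e_8,e_7]}$ is exactly the paper's $e_{14}+e_{15}$) as the even fourth Chevalley generator alongside the three odd $J_2$-classes, and conclude by a dimension count after checking the relations \eqref{rels} --- the paper via SuperLie, you largely by hand using Proposition~\ref{strucconsts}. One slip in your final paragraph: the surjection runs from the generated subalgebra $S$ \emph{onto} $\g(3,6)$ (because the kernel of $\widetilde{\g}(A)\twoheadrightarrow S$ is a graded ideal missing $\h$ and hence lies inside $\mathfrak r$), not the other way around; this is precisely the direction that gives $\dim S\ge\dim\g(3,6)=\dim\ov{\ee}$ and forces $S=\ov{\ee}$, and the paper also notes that simplicity of $\g(3,6)$ makes a separate generation check unnecessary.
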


\begin{proof}
  The main theorem does not go through because the node corresponding to $e_6$ on the Dynkin diagram is not a boundary node, so we will proceed manually using the language of \S\ref{explicit} and Proposition \ref{strucconsts}. However, the key point that the generators $e_1, e_2, e_6, e_8$ pairwise commute still holds. Let us first consider what happens to the positive generators. Because of the new situation, we will also need to consider root vectors that are not attached to simple roots. Let:

  \begin{enumerate}
    \item $e_9 \coloneqq [x, e_3] = [e_1,e_3]$,
    \item $e_{10} \coloneqq [x, e_4] = [e_2, e_4]$,
    \item $e_{13} \coloneqq -[x, e_5] = [e_5, e_6]$,
    \item $e_{14} \coloneqq [e_6, e_7]$,
    \item $e_{15} \coloneqq [-e_8, e_7]$,
    \item $e_{22} \coloneqq [e_8, [e_6, e_7]]$, which also happens to equal $-[x, e_{14} - e_{15}]$.
  \end{enumerate}
   and similarly for $f$; these new indices are chosen in accordance with labeling in the SuperLie software and do not have any other meaning. In particular, note that $e_{14} - e_{15} = [x, e_7]$.
   \par
   Let's consider the upper triangular subalgebra, completed to a direct summand in $\Rep \bal_3$ (by symmetry, this will tell us what happens to the lower triangular subalgebra). We have the following copies of $J_3$: 
   
   \begin{enumerate}
     \item $f_1 \mapsto h_1 \mapsto e_1$;
     \item $f_2 \mapsto h_2 \mapsto e_2$;
     \item $f_6 \mapsto h_6 \mapsto e_6$;
     \item $f_8 \mapsto h_8 \mapsto e_8$;
     \item $e_7 \mapsto e_{14} - e_{15} \mapsto -e_{22}$.
   \end{enumerate}
  These will vanish in the semisimplification, and in particular the information attached to the generator $e_7$ is annihilated. The remaining generators yield odd generators, as we have the following copies of $J_2$: $e_3 \mapsto e_9$, $e_4 \mapsto e_{10}$, and $e_5 \mapsto -e_{13}$.
   \par
   In the semisimplification, the images of these odd generators do not generate the image (in fact, it is easily seen that along with their $f$ counterparts they generate the derived algebra of the rank $3$ Lie superalgebra $\g(2, 3)$ above). However, $\g(3, 6)$ is of rank $4$ and contains $\g(2, 3)^{(1)}$ as a subalgebra, which we know by just looking at their Cartan matrices, so we should be able to find another indecomposable whose image will serve as our final generator (it should span a copy of $J_1$ which will be a direct summand). In fact, the vector that will serve as our generator is $e_{14} + e_{15}$. The vectors  $e_{14} + e_{15}$, $-f_{14} - f_{15}$, and $h_6 - h_7 + h_8$ form an $\sll$-triple (in characteristic $3$). The element $e_{14} + e_{15}$ should be treated as a positive generator. Hence, in the semisimplification, we define the following vectors, where the LHS is the vector and the RHS is the subspace it spans:

   \begin{align*}
     \widetilde{e}_1 \coloneqq \ov{e_3 \mapsto e_9}; \ \ \ \widetilde{e}_2 \coloneqq \ov{e_4 \mapsto e_{10}}; \ \ \ \widetilde{e}_3 \coloneqq \ov{e_5 \mapsto -e_{13}}; \ \ \ \widetilde{e}_4 \coloneqq \ov{e_{14} + e_{15}} \\
     \widetilde{f}_1 \coloneqq \ov{f_9 \mapsto f_3}; \ \ \ \widetilde{f}_2 \coloneqq \ov{f_{10} \mapsto f_{4}}; \ \ \ \widetilde{f}_3 \coloneqq \ov{-f_{13} \mapsto f_{5}}; \ \ \ \widetilde{f}_4 \coloneqq \ov{-f_{14} - e_{15}} \\
     \widetilde{h}_1 \coloneqq \ov{h_3}; \ \ \ \widetilde{h}_2 \coloneqq \ov{h_4}; \ \ \ \widetilde{h}_3 \coloneqq \ov{h_5}; \ \ \ \widetilde{h}_4 \coloneqq \ov{h_6 - h_7 + h_8}.
   \end{align*}
   Now, using SuperLie, we can check that the collection of vectors $\{\widetilde{e}_i, \widetilde{f}_i, \widetilde{h}_i\}$ do indeed generate the semisimplification. Furthermore, direct computations show that they satisfy the same relations as their counterparts in $\g(3, 6)$, so this means we have a morphism from $\g(3, 6)$ to the semisimplification. Then, by comparing dimensions, we deduce that they are isomorphic. As a remark, we do not need to even know that the generators generate the image, as the Lie superalgebra  $\g(3, 6)$ is simple.
\end{proof}

For completeness, we briefly describe another construction of $\g(3, 6)$. Recall that as a pair (Lie algebra, module), $\ee$ splits as the $120$-dimensional simple Lie algebra $\mathfrak{o}_{16}$ and its $128$-dimensional spinor module $L(\omega_8)$. In our notation, the subalgebra $\mathfrak{o}_{16}$ is generated by $e_2, e_3, e_4, e_5$, $e_6, e_7, e_8$ and

\[e_{100} \coloneqq [[[[e_1,e_3], [e_4,e_5]], [[e_2,e_4],[e_5,e_6]]], [[[e_1,e_3],[e_2,e_4]], [[e_6,e_7], [e_5,[e_3,e_4]]]]],\]
and their $f$ counterparts (again, the choice of index here is just based on the program SuperLie and otherwise has no meaning).
\par
Let $e_{27} \coloneqq [[e_6,e_5],[e_4,e_3]]$ and similarly for $f_{27}$. Then, $\mathfrak{sp}_8$ is a subalgebra of $\mathfrak{o}_{16}$ generated by $e_{100} - f_{5}$, $e_{8} - f_{4}$, $e_7 - f_3$, $e_{27}$, and  $f_{100} - e_{5}$, $f_{8} - e_{4}$, $f_7 - e_3$, $f_{27}$, where the first four elements correspond to the simple roots in the usual order and the last four elements correspond to the negatives of the simple roots. Then, one can take an $\sll$-triple in $\mathfrak{o}_{16} \subseteq \ee$ that centralizes $\mathfrak{sp}_8$; semisimplifying with respect to, say, the positive root vector in this triple gives the desired $\g(3, 6)$.

\subsubsection{Summary}
In this section, we summarize and extend the results above. Here we have the appropriate Dynkin diagrams for easy reference, where the diagrams for $\esi$ and $\ese$ are subdiagrams of that of $\ee$ in the obvious way:

\begin{center}
  \begin{align*}
    \mathfrak{br}_3: \hspace{5 mm} & 
    \begin{dynkinDiagram}[labels={1,2,3},scale=4, mark=o, label distance = 5pt]A3  
    \fill[black] (root 3) circle (0.01cm);
    \end{dynkinDiagram}\\    
    \ff: \hspace{5 mm} &  \dynkin[labels={1,2,3,4},scale=4, arrow shape/.style={-{jibladze[length=7pt]}}, label distance = 5pt]  F{oooo} \\
    \ee: \hspace{5 mm}&  \dynkin[labels={1,2,3,4,5,6,7,8},scale=4, label distance = 5pt]  E{oooooooo}
  \end{align*}
\end{center}
In the table below, we state results by specifying the starting Lie algebra, the nilpotent element used to semisimplify, and the resulting Lie superalgebra. We include all possible combinations of boundary nodes and some examples that do not follow from the main theorem as well. Other nilpotent elements (but not all of them) that give the same semisimplification are listed in the same row. For the constructions involving $\mathfrak{e}_6^{(1)}$, one can further quotient out by the centers to get simple Lie (super)algebras.

\begin{longtable}[c]{@{}lll@{}}
\toprule
 Lie algebra & Nilpotent element  & Lie superalgebra \\* \midrule
\endfirsthead
\multicolumn{3}{c}%
{{\bfseries Table continued from previous page}} \\
\toprule
 Lie algebra & Nilpotent element(s)  & Lie superalgebra  \\* \midrule
\endhead
\bottomrule
\endfoot
\endlastfoot
$\mathfrak{br}_3$ & $e_1, e_2$ & $\mathfrak{brj}_{2;3}$  \\ \midrule
 $\ff$ & $e_1$ & see ($\star$) below  \\
 &  $e_4$ & $\g(1,6)$  \\
 & $e_1 + e_4$ & see ($\star$) below  \\ \midrule
 $\esi^{(1)}$ & $e_1, e_2, e_6$ &  $\g(2, 6)^{(1)}$ \\
 & $e_1 + e_2$, $e_2 + e_6$, $e_1 + e_6$  & $\g(3,3)^{(1)}$  \\
 & $e_1 + e_2 + e_6$  & $\g(2,3)^{(1)}$ \\ \midrule
 $\ese$ & $e_1, e_2, e_7$ & $\g(4,6)$  \\
 & $e_1 + e_2$, $e_2 + e_7$, $e_1 + e_7$ &  $\mathfrak{el}(5; 3)$ \\
 & $e_1 + e_2 + e_7$  & $\g(4,3)$ \\
 & $e_2 + e_5 + e_7$ & $\ff$;  see  $(\star\star)$  below \\
 & $e_1 + e_2 + e_5 + e_7$  & $\g(1, 6)$ \\ \midrule
 $\ee$ & $e_1, e_2, e_8$ &  $\g(8, 6)$  \\
 & $e_1 + e_2$, $e_2 + e_8$, $e_1 + e_8$  & $\g(6, 6)$ \\
 & $e_1 + e_2 + e_8$  &  $\g(8, 3)$  \\
 & $e_1 + e_2 + e_6 + e_8$ & $\g(3, 6)$  \\* \bottomrule
\end{longtable}
\begin{enumerate}
  \item[($\star$)] Semisimplifying $\ff$ with Cartan matrix $A$ with respect to $e_1$ gives a Lie superalgebra $\ov{\ff}$ of superdimension of $(15|8)$, whereas the Lie superalgebra $\g(\widetilde{A}) = \mathfrak{sl}_{3|1}$ as described in the setup for Theorem \ref{maint} is of superdimension $(9|6)$. Hence, Theorem \ref{maint} cannot be applied. However, it can be computed by hand that $\ov{\ff}^{gen}$ is of superdimension $(9|6)$ and is isomorphic to $\mathfrak{sl}_{3|1}$. A similar statement applies when we semisimplify $\ff$ with $e_1 + e_4$. Although these may seem like edge cases, there is a more general conjecture which captures these cases and the main theorem simultaneously, which we discuss in \S\ref{remarks} in Conjecture \ref{conj}.
  \item[($\star\star$)] The semsimplification of $\ese$ giving $\ff$ is related to the Kantor-Koecher-Tits construction of a Lie algebra given a Jordan algebra. In particular, $\ese = \sll \otimes \mathbb{A} \oplus \ff$ as vector spaces, where $\mathbb{A}$ is the Albert algebra, which is the exceptional simple Jordan algebra. Furthermore, there appears to be some notion of ``iterating" semisimplifications, as we can semisimplify $\ese$ to get $\ff$ and then semisimplify again to get $\g(1,6)$, or we can semisimplify $\ese$ to directly get $\g(1,6)$.
\end{enumerate}
\par
One may notice that for $\esi, \ese, \ee$, when we picked fewer than three generators to sum, it did not matter which generators we picked to semisimplify, but rather only how many we picked. These are actually instances of a more general phenomenon. Let $\g = \g(A)$ be as in the hypothesis of Theorem \ref{maint}. which is found at the start of \S\ref{maintsec}. Let $\mathcal{I}$ be any subset of the nodes of the Dynkin diagram $\g$ such that no two nodes in $\mathcal{I}$ are adjacent. Color the nodes in $\mathcal{I}$ black, and all other nodes white. Finally, let $e_\mathcal{I} \coloneqq \sum_{i \in \mathcal{I}} e_i$.
\par
A \textit{legal swap} of the colored Dynkin diagram is defined as any recoloring of the diagram of the following form: if $i$ and $j$ are two adjacent nodes connected by a single edge, with $i$ colored black, and $j$ colored white, and no other node adjacent to $j$ is black, swap the colors of $i$ and $j$. It is clear that the legal swaps generate a groupoid under composition; we will call its elements \textit{legal recolorings}. Let $\sigma(\mathcal{I})$ denote the set of nodes colored black after applying the legal recoloring $\sigma$.
\par
Here are some examples of legal swaps:

\begin{center}
  \vspace{0.5 cm}
  \begin{dynkinDiagram}[mark=o,labels={1,2,3,4,5,6,7},scale=2, label distance = 5pt]E7
  \fill[black, draw=black] (root 2) circle (0.05cm);
  \fill[black, draw=black] (root 7) circle (0.05cm);
  \end{dynkinDiagram} $\longrightarrow$
  \begin{dynkinDiagram}[mark=o,labels={1,2,3,4,5,6,7},scale=2, label distance = 5pt]E7
  \fill[black, draw=black] (root 2) circle (0.05cm);
  \fill[black, draw=black] (root 6) circle (0.05cm);
  \end{dynkinDiagram} $\longrightarrow$
  \begin{dynkinDiagram}[mark=o,labels={1,2,3,4,5,6,7},scale=2, label distance = 5pt]E7
  \fill[black, draw=black] (root 4) circle (0.05cm);
  \fill[black, draw=black] (root 6) circle (0.05cm);
  \end{dynkinDiagram}
\end{center}

\begin{center}
  \vspace{0.5 cm}
  \begin{dynkinDiagram}[mark=o,labels={1,2,3,4},scale=2, arrow shape/.style={-{jibladze[scale=0.58, length=7pt]}}, label distance = 5pt]F4
  \fill[black, draw=black] (root 4) circle (0.05cm);
  \end{dynkinDiagram} $\longrightarrow$
  \begin{dynkinDiagram}[mark=o,labels={1,2,3,4},scale=2, arrow shape/.style={-{jibladze[scale=0.58, length=7pt]}}, label distance = 5pt]F4
  \fill[black, draw=black] (root 3) circle (0.05cm);
  \end{dynkinDiagram}
\end{center}
Here are some examples of illegal swaps:
\begin{center}
  \vspace{0.5 cm}
  \begin{dynkinDiagram}[mark=o,labels={1,2,3,4,5,6,7},scale=2, label distance = 5pt]E7
  \fill[black, draw=black] (root 4) circle (0.05cm);
  \fill[black, draw=black] (root 6) circle (0.05cm);
  \end{dynkinDiagram} $\longrightarrow$
  \begin{dynkinDiagram}[mark=o,labels={1,2,3,4,5,6,7},scale=2, label distance = 5pt]E7
  \fill[black, draw=black] (root 4) circle (0.05cm);
  \fill[black, draw=black] (root 5) circle (0.05cm);
  \end{dynkinDiagram}
\end{center}

\begin{center}
  \vspace{0.5 cm}
  \begin{dynkinDiagram}[mark=o,labels={1,2,3,4},scale=2, arrow shape/.style={-{jibladze[scale=0.58, length=7pt]}}, label distance = 5pt]F4
  \fill[black, draw=black] (root 3) circle (0.05cm);F
  \end{dynkinDiagram} $\longrightarrow$
  \begin{dynkinDiagram}[mark=o,labels={1,2,3,4},scale=2, arrow shape/.style={-{jibladze[scale=0.58, length=7pt]}}, label distance = 5pt]F4
  \fill[black, draw=black] (root 2) circle (0.05cm);
  \end{dynkinDiagram}
\end{center}
\begin{thm}\label{swaps}
  Let $\mathcal{I}$ be a configuration of black nodes as above, and let $\sigma$ be a legal recoloring. Then, $\g$ can be realized as an object in $\Rep \bal_3$ in two ways: with respect to $e_\mathcal{I}$, denoted $\g_\mathcal{I}$, or with respect to $e_{\sigma(\mathcal{I})}$, denoted $\g_{\sigma(\mathcal{I})}$. Furthermore, the semisimplifications $\ov{\g_\mathcal{I}}$  and $\ov{\g_{\sigma(\mathcal{I})}}$ are isomorphic as Lie superalgebras.
\end{thm}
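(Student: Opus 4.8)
The plan is to reduce the claim to a single legal swap $\sigma$ exchanging adjacent nodes $i$ (black) and $j$ (white), and then to construct an explicit isomorphism by tracking what happens to the relevant $\mathfrak{sl}_2$-triples and indecomposable summands. First, since legal swaps generate the groupoid of legal recolorings, it suffices to treat the case where $\sigma(I) = (I \setminus \{i\}) \cup \{j\}$ for a single legal swap, and then compose. Fix such a swap. The key algebraic observation is that $e_I$ and $e_{\sigma(I)}$ differ only in the ``$i$th slot'': write $e_I = e_i + e'$ and $e_{\sigma(I)} = e_j + e'$, where $e' = \sum_{k \in I \setminus \{i\}} e_k$ commutes with both $e_i$ and $e_j$ (by the Serre relations, using that $i,j$ are non-adjacent to every other node of $I$, which is exactly what ``no other node adjacent to $j$ is black'' guarantees, together with $I$ being an independent set). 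So the question is local to the rank-$2$ subsystem spanned by $\alpha_i, \alpha_j$, sitting inside $\g$ with a single edge between $i$ and $j$, i.e.\ an $\mathfrak{sl}_3$-subalgebra.

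Next I would set up the decompositions of $\g$ into $\KK\bal_3$-indecomposables with respect to $\ad e_I$ and $\ad e_{\sigma(I)}$ in parallel, following the method of Lemma \ref{bas}. The point is that outside the $\mathfrak{sl}_3$-block generated by $e_i, f_i, e_j, f_j$, the two derivations $\ad e_I$ and $\ad e_{\sigma(I)}$ act identically, so the summands there are literally the same; inside the block, one gets the familiar decomposition of $\mathfrak{sl}_3$ under $\ad e_i$ (one $J_3$, a $J_2$ from $e_j \mapsto [e_i,e_j]$, a $J_2$ from the $f$-side, and one $J_1$ in the Cartan) versus under $\ad e_j$ (the analogous decomposition with roles of $i$ and $j$ reversed). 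Crucially, the extension data — how the remaining summands of $\g$ ``hang off'' this block via the bracket — is governed by the same root-space combinatorics in both cases, because the relevant root spaces are one-dimensional and the only structure constants that matter are the Cartan-integer pairings, which are symmetric under $i \leftrightarrow j$ within this sub-block.

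Then I would construct the isomorphism $\ov{\g_I} \to \ov{\g_{\sigma(I)}}$ on the chosen bases: on the ``common'' summands it is the identity; on the image of the $\mathfrak{sl}_3$-block it sends $\ov{e_j \mapsto [e_i,e_j]}$ to $\ov{e_i \mapsto [e_j, e_i]}$, swaps the two relevant Chevalley generators, and sends $\ov{h_j}$ (or the appropriate Cartan combination) to $\ov{h_i}$, with sign adjustments dictated by $a_{ij} = a_{ji} = -1$ exactly as in the computations in cases (1)--(3) of the proof of Lemma \ref{surj}. One verifies it is a Lie superalgebra homomorphism by checking brackets using Proposition \ref{strucconsts}: brackets among common generators are unchanged; brackets of a common generator with the block image reduce to Cartan pairings which are matched by the construction; and the internal brackets of the block image were already identified via the $\mathfrak{sl}_3$ computation. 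Being a bijection on bases, it is an isomorphism.

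The main obstacle I anticipate is the bookkeeping in the ``crossed'' brackets — those between the image of the $J_2$'s in the $\mathfrak{sl}_3$-block and the images of the $J_2$'s or $J_1$'s arising from neighbors of $j$ (resp.\ $i$) in the Dynkin diagram. Here one genuinely uses the hypothesis that no other node adjacent to $j$ is black, so that the height/$Q$-grading argument of Lemma \ref{bas} still applies and no unexpected summands appear; making sure the extension classes genuinely match (and not just the dimensions) is the delicate point, and is where one would either invoke simplicity of $\g$ to pin down the one-dimensional root spaces, or run the explicit Jacobi-identity computation as in Lemma \ref{surj}. One should also remark that, since each $\g$ here is simple, it would in fact suffice to produce a nonzero homomorphism in one direction and compare $\sdim$, which simplifies the verification considerably.
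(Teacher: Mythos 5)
Your reduction to a single legal swap is fine, but the central claim that carries the rest of the construction --- that outside the $\mathfrak{sl}_3$-block generated by $e_i,f_i,e_j,f_j$ the derivations $\ad e_I$ and $\ad e_{\sigma(I)}$ ``act identically,'' so the indecomposable summands there are literally the same --- is false. The two derivations differ by $\ad e_i-\ad e_j$, and $\ad e_i$ acts nontrivially on every root space $\g_\gamma$ with $\gamma+\alpha_i$ a root; such $\gamma$ occur throughout $\g$, not only in the rank-two subsystem spanned by $\alpha_i,\alpha_j$. Concretely, in $\ese$ with $I=\{2\}$ and the legal swap to $\{4\}$, the vector $e_5$ is annihilated by $\ad e_2$ but not by $\ad e_4$, so it lies in indecomposables of different sizes for the two $\KK\bal_3$-module structures. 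Consequently there is no collection of ``common summands'' on which your map can be the identity, and a map defined to be the identity off the block does not intertwine the two module structures, so it does not descend to the semisimplifications in the way your appeal to Proposition \ref{strucconsts} would require; the delicate ``crossed brackets'' you flag are not a bookkeeping issue but the place where the strategy breaks. The closing suggestion of producing one nonzero homomorphism and comparing $\sdim$ is also not enough as stated, since the semisimplified superalgebras need not be simple.

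What is needed instead is a single global automorphism of $\g$ carrying $e_I$ to $e_{\sigma(I)}$: any such automorphism is an isomorphism between the two objects of $\Rep\bal_3$ equipped with their brackets, hence induces an isomorphism $\ov{\g_I}\cong\ov{\g_{\sigma(I)}}$ by functoriality. This is exactly how the paper argues: $e_I$ and $e_{\sigma(I)}$ lie in the same nilpotent orbit of the adjoint group $G$, because the nodes $i,j$ span a subgroup of type $A_2$ in which a representative of the Weyl reflection conjugates $e_i$ to $e_j$, while the hypothesis that no other black node is adjacent to $i$ or $j$ guarantees that this representative fixes the remaining $e_k$ for $k\in I\setminus\{i\}$. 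Your identification of where the legality hypothesis enters is correct, and your reduction to one swap is the same first step as the paper's, but the summand-by-summand matching cannot substitute for this conjugation argument.
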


\begin{proof}
  It suffices to assume that $\sigma$ is a legal swap that swaps the colors of the adjacent nodes $i$ and $j$, where $i$ is black and $j$ is white. By the initial configuration $\mathcal{I}$ and the definition of a legal swap, the proof that $e_I$ is nilpotent of degree at most three goes through just like the proof of Lemma \ref{obincat}.
  \par
  Now, let $G$ be a split simple linear algebraic group with Lie algebra $\g$. Clearly, the adjoint representation of $G$ acts on $\g$ by Lie algebra automorphisms, and nilpotent elements are partitioned into orbits. Hence, it suffices to show that $e_\mathcal{I}$ and $e_{\sigma(\mathcal{I})}$ lie in the same nilpotent orbit.
  \par
  The Dynkin subdiagram formed by nodes $i$ and $j$ corresponds to a subgroup $H$ of type $A_2$ (i.e. $SL_3$ or $PGL_3$) in $G$, and there is a Weyl group element $w \in N(T)/T$, where $T$ is a maximal torus in $H$, that permutes them. We can find a coset representative $g \in H$ that corresponds to $w$ such that its conjugation action on $\mathfrak{g}$ will send $e_i$ to $e_j$. The element $g$ then lifts to an element $\widetilde{g}$ in $G$, and because no other black nodes are connected to $i$ or $j$, the conjugation action of $\widetilde{g}$ will not change any other $e_k$ for $k \neq i \in \mathcal{I}$. This shows the claim.
\end{proof}
Combining this theorem with Theorem \ref{maint} gives us the semisimplification with respect to a large class of elements. For instance, we deduce that semisimplifying $\ee$ with respect to any $e_i$, regardless of whether $i$ corresponds to a boundary node or not, gives $\g(8, 6)$.

\subsection{Some Remarks} \label{remarks}
\subsubsection{Infinite-Dimensional Lie Algebras}
Although we worked with finite-dimensional Lie algebras, it is natural to see what Lie superalgebras we get when we allow infinite-dimensional Lie algebras.
\par
Let $A$ be a purely even, symmetrizable, indecomposable Cartan matrix of size $n$ whose off-diagonal entries can take values in $\{-2, -1, 0\}$. Let $D$ be the corresponding Dynkin diagram, and call a node in the diagram a \textit{boundary node} if it is connected to exactly one other node in the diagram via a single edge. Let $\mathcal{I} = \{i_1, \dots, i_l\}$ be a subset of the boundary nodes of $D$ such that no two nodes in $\mathcal{I}$ share an adjacent node, and let $\mathcal{J} = \{j_1, \dots, j_l\}$ be the corresponding adjacent nodes, respectively (the elements of $\mathcal{J}$ are necessarily distinct). Furthermore, we require that $\mathcal{I}$ be chosen so that $a_{ii} = 2$ for all $i \in \mathcal{I} \cup \mathcal{J}$. Let $\widetilde{A}$ be the $(n-l) \times (n-l)$ matrix obtained from $A$ by setting $a_{jj} = 0$ for $j \in \mathcal{J}$ and deleting the $i$-th row and column from $A$ for $i \in \mathcal{I}$. Finally, let $e_\mathcal{I} = \sum_{i\in \mathcal{I}} e_i$. Then, we have the following conjecture for characteristic $3$:

\begin{conj}\label{conj}
  The Lie algebra $\g(A)^{(1)}$ can be realized as an object in $\Rep \bal_3$ with respect to $e_\mathcal{I}$, and $\ov{\g(A)^{(1)}}^{gen}$ is isomorphic to $\g(\widetilde{A})^{(1)}$. Furthermore, if $\sigma$ is any legal recoloring of $\mathcal{I}$, then the previous statement also holds for $e_{\sigma(\mathcal{I})}$ in place of $e_\mathcal{I}$.
\end{conj}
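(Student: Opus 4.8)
The plan is to re-run the machinery that proved Theorem \ref{maint}, observing that finiteness and simplicity of $\g(A)$ entered only through Lemma \ref{dettildeA}, Lemma \ref{obincat}, and the superdimension count: the first is now an explicit hypothesis ($\det\widetilde A\ne 0$), the superdimension count will be replaced by a mutual-surjection argument valid even when $\g(\widetilde A)$ is infinite-dimensional, and Lemma \ref{obincat} is the genuine obstacle. For the first claim I would first reduce $(\ad e_I)^3=0$ to $(\ad e_i)^3=0$ for each single $i\in I$: since the nodes of $I$ are pairwise non-adjacent the Serre relations give $[e_i,e_{i'}]=0$, so the $\ad e_i$ commute, and in characteristic $3$ the coefficients $\binom{3}{2}$ and $\binom{3}{1,1,1}$ vanish, whence $(\ad e_I)^3=\sum_{i\in I}(\ad e_i)^3$. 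On Chevalley generators $(\ad e_i)^3=0$ follows from the Serre relations and $a_{i,j_r}=a_{j_r,i}=-1$ exactly as in Lemma \ref{obincat}; the real content is to propagate this to all of $\g(A)$, i.e. to bound the $\alpha_i$-root strings. Here $\beta(h_i)=2n_i(\beta)-n_j(\beta)$ for the unique neighbour $j$ of the boundary node $i$, and in finite type — in particular for all classical Cartan matrices, which is what the remark before the conjecture is aimed at — the usual bounds on $\beta(h_i)$ and on root-string lengths force $(\ad e_i)^3=0$. For a general symmetrizable $A$ this can fail (e.g. in hyperbolic types, large imaginary-root weight spaces generate long $\mathfrak{sl}_2$-strings under $\ad e_i$), so I expect this step either needs an extra hypothesis or is precisely the point where the conjecture must be sharpened; granting it, $\g(A)$ (a priori an ind-object) is an object of $\Rep\bal_3$ with respect to $\ad e_I$ and its semisimplification $\ov{\g(A)}$ in $\Ver_3$, projected into $\sVec_\KK$, is defined.

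Next I would generalize Lemmas \ref{bas} and \ref{surj}. Lemma \ref{bas} is purely combinatorial in the root datum and the Serre relations — the paper already records that its proof never invokes simplicity — so it carries over verbatim: one gets an $\ad e_I$-stable decomposition of $\g(A)$ into indecomposables containing all Chevalley generators and a basis of $\h(A)$, with $J_2$'s $e_{j_r}\mapsto[e_{i_r},e_{j_r}]$ and their $f$-counterparts, $J_1$'s on $h_{j+l}-h_j$ and on the remaining $h_k$, the $J_3$-triples on $\{f_{i_r},h_{i_r},e_{i_r}\}$, and a complement $W'$; the one-dimensionality of the root spaces $\g_{\alpha_i}$, $\g_{\alpha_i+\alpha_j}$, $\g_{\alpha_k+\alpha_i+\alpha_j}$ used there holds because these are real roots of rank-$\le 3$ subsystems of type $A$. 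Likewise the bracket computations of Lemma \ref{surj} are local near the generators and use only Serre relations, the Jacobi identity, Proposition \ref{strucconsts}, and $a_{i,j_r}=a_{j_r,i}=-1$; nothing there is finite-dimensional. So one obtains generators $\{\widetilde e_i,\widetilde f_i,\widetilde h_i\}_{1\le i\le n-l}$ of $\ov{\g(A)}^{gen}$ satisfying the relations \eqref{rels}, and, since $\det\widetilde A\ne 0$ makes $\{E_i,F_i,H_i\}$ generate $\widetilde\g(\widetilde A)$, a $Q$-graded surjection $\pi\colon\widetilde\g(\widetilde A)\twoheadrightarrow\ov{\g(A)}^{gen}$ sending generators to generators.

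Then I would identify the two algebras. The images $\widetilde h_i$ are $n-l$ vectors and remain linearly independent in $\ov{\g(A)}^{gen}$ (they are images of a direct summand of pairwise independent $J_1$'s, and the extra relation $[x,[x,x]]=0$ only involves odd, hence degree-$\ne 0$, elements), while $\det\widetilde A\ne 0$ gives $\dim\h(\widetilde A)=n-l$; hence $\h(\widetilde A)\hookrightarrow\ov{\g(A)}^{gen}$, so $\ker\pi$ is a $Q$-graded ideal meeting $\h(\widetilde A)$ trivially and therefore contained in the maximal such ideal $\mathfrak r$. Thus $\pi$ factors through a surjection $q_1\colon\g(\widetilde A)=\widetilde\g(\widetilde A)/\mathfrak r\twoheadrightarrow\ov{\g(A)}^{gen}$, and $\ov{\g(A)}^{gen}=\widetilde\g(\widetilde A)/\ker\pi$ in turn surjects onto $\g(\widetilde A)$ via some $q_2$. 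The composite $q_2q_1$ fixes every $E_i,F_i,H_i$, which generate $\g(\widetilde A)$ (again by $\det\widetilde A\ne 0$), so $q_2q_1=\mathrm{id}$; hence $q_1$ is injective as well as surjective, giving $\g(\widetilde A)\cong\ov{\g(A)}^{gen}$. Note that at no point did I compare dimensions, which is essential since $\g(\widetilde A)$ need not be finite-dimensional.

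Finally, for the legal-recoloring statement I would replace the algebraic-group argument of Theorem \ref{swaps}, which no longer applies, by Kac–Moody group elements. It suffices to treat one legal swap exchanging $i\in I$ (black) with its unique neighbour $j$ (white, with no other black neighbour); as above $e_{\sigma(I)}$ is again nilpotent of degree $\le 3$. Since $\ad e_i,\ad f_i,\ad e_j,\ad f_j$ are locally nilpotent on $\g(A)$, the Tits lifts $\widetilde s_i,\widetilde s_j$ of the simple reflections are automorphisms of $\g(A)$, and a product realizing the element of the $A_2$-Weyl group that sends $\alpha_i\mapsto\alpha_j$ carries $e_i$ to $\pm e_j$ and — because no node of $I\setminus\{i\}$ is adjacent to $i$ or to $j$ — fixes each $e_k$ ($k\in I$, $k\ne i$) up to sign; composing with a torus element (available since $\det A\ne 0$ makes the simple roots linearly independent, so any prescribed rescaling of the $e_m$ is realized) corrects the signs and produces an automorphism sending $e_I$ to $e_{\sigma(I)}$, whence $\ov{\g_I}\cong\ov{\g_{\sigma(I)}}$. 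The one step I expect to be hard — and the one on which the correct formulation of the conjecture hinges — is bounding the $\alpha_i$-root strings so that $(\ad e_i)^3=0$ holds globally on $\g(A)$; everything after that is, as in the exceptional case, bookkeeping that genuinely generalizes.
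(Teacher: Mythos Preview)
The statement is a \emph{conjecture} in the paper, not a theorem; the paper does not prove it but only records partial progress and explicitly names the obstruction as ``showing that [$\ov{\g(A)}^{gen}$] has no nontrivial ideal that [trivially] intersects the Cartan subalgebra.'' Your proposal claims to close exactly this gap via a mutual-surjection trick, but the crucial step is logically reversed.

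From $\ker\pi\subset\mathfrak r$ you deduce that $\pi$ factors through $\widetilde\g(\widetilde A)/\mathfrak r$, giving $q_1\colon\g(\widetilde A)\twoheadrightarrow\ov{\g(A)}^{gen}$. This is backwards: for $\pi$ to descend to the quotient by $\mathfrak r$ one needs $\mathfrak r\subset\ker\pi$, not $\ker\pi\subset\mathfrak r$. What the inclusion $\ker\pi\subset\mathfrak r$ actually yields is the surjection in the other direction, $\ov{\g(A)}^{gen}\cong\widetilde\g(\widetilde A)/\ker\pi\twoheadrightarrow\widetilde\g(\widetilde A)/\mathfrak r=\g(\widetilde A)$ --- which is your $q_2$, and is exactly the one surjection the paper already claims (``there should be a surjection from $\ov{\g(A)}^{gen}$ to $\g(\widetilde A)$''). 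So you have only $q_2$, not two maps, and the identity $q_2q_1=\mathrm{id}$ is vacuous because $q_1$ does not exist. Showing $q_2$ is injective, i.e.\ $\ker\pi=\mathfrak r$, is precisely the open difficulty, and your argument does not touch it.

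The rest of your outline is in line with what the paper sketches. You correctly isolate the nilpotency issue --- reducing $(\ad e_I)^3=0$ to the single-node case and flagging that bounding $\alpha_i$-root strings may fail in indefinite type --- and you are right that Lemmas \ref{bas} and \ref{surj} carry over formally since simplicity was not used. Your replacement of the algebraic-group conjugation in Theorem \ref{swaps} by Tits lifts of Weyl-group elements is the natural Kac--Moody substitute and matches the paper's remark that ``the second statement of the conjecture should follow from the proof of Theorem \ref{swaps}.'' But none of this repairs the central gap: the conjecture remains open, and your mutual-surjection argument does not prove it.
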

Most of the work for the first statement of the conjecture has already been done when proving Theorem \ref{maint}; in fact, if we assume that if for each $n \in \Z$ that each subspace of height $n$ in $\g(\widetilde{A})^{(1)}$ and in $\ov{\g(A)^{(1)}}^{gen}$ have the same dimension, then the proof actually goes through directly. However, loosening this dimension requirement would be interesting because it does not appear to be necessary in our computations. In any case, there should be a surjection from $\ov{\g(A)^{(1)}}^{gen}$ to $\g(\widetilde{A})^{(1)}$. The difficulty lies in showing that the former has no nontrivial ideal that intersects the Cartan subalgebra. The last statement of the conjecture should follow from the proof of Theorem \ref{swaps}.
\par
It would be interesting to develop an $\sll$-equivariant theory of infinite-dimensional Lie algebras with Cartan matrix, or more generally, a theory of infinite-dimensional Lie algebras with Cartan matrix in the category of $\Rep \bal_p$; then one can see what happens in the semisimplification in $\Ver_p$ and in $\sVec_\KK$. 

\subsubsection{Applications to Representation Theory}
Lastly, one can use the representation theory of exceptional Lie algebras to study representation theory of the exceptional Lie superalgebras obtained via semisimplification of these exceptional Lie algebras. In particular, one can start with a module over an exceptional Lie algebra in $\Rep \bal_3$ and semisimplify it to construct modules over the corresponding Lie superalgebra. Although these can be probably classified by a highest-weight argument, this approach will give a construction to determine the size of these modules, which to our knowledge is virtually unknown.
\section{An Exceptional Lie Superalgebra in Characteristic $5$}

In this section, we construct the Elduque Lie superalgebra $\mathfrak{el}(5; 5)$ of superdimension $(55|32)$ by semisimplifying $\ee$. This Lie superalgebra appeared for the first time in \cite{elduque2007some}. The even subalgebra is the orthogonal Lie algebra $\mathfrak{o}_{11}$, and the odd part is the $32$-dimensional spinor module $L(\omega_5)$. The Lie superalgebra $\mathfrak{el}(5; 3)$ constructed earlier is a characteristic $3$ analog of this discovered in \cite{bouarroudj2009classification}, in the sense that there is a suitable choice of Cartan matrix that creates both, depending on the characteristic. Our main theorem has not been extended to characteristic $5$, so we will construct this in a fashion similar to the alternate construction of $\g(3, 6)$ described above.
\par
As remarked earlier, $\ee$ as a vector space is the direct sum of the $120$-dimensional simple Lie algebra $\mathfrak{o}_{16}$ and its $128$-dimensional spinor module $L(\omega_8)$. First, let's consider the subalgebra $O = \mathfrak{o}_{16}$. Recall from \S\ref{altconst} that the root vectors $e_2, e_3, e_4, e_5$, $e_6, e_7, e_8$ and $e_{100}$, which along with their $f$ counterparts generate $O$. Visually, the Dynkin diagram looks like:

\[\dynkin[labels={100,8,7,6,5,4,3,2},scale=4, label distance = 5pt]  D{oooooooo}\]
Let $x = e_2 + e_3 + e_4$. First of all, because  $(\ad x)^5 = 0$, we can realize $\ee$ as an object in $\Rep \bal_5$ with respect to $\ad x$. Furthermore, because $x \in O$ and $O$ is a subalgebra, we can also view $O$ as a subobject of $\ee$ in $\Rep \bal_5$. The Lie algebra $O$ also acts on $V = \KK^{16}$; let us view $V$ as an object in $\Rep \bal_5$ with respect to the action by $x$. Then, the action $O \otimes V \rightarrow V$ becomes a morphism in $\Rep \bal_5$ as well, and $O$ is precisely the subalgebra of $\gl(V)$ preserving some non-degenerate symmetric bilinear form $\gamma$ on $V$.
\par
Then, it is easily checked that $V = 11J_1 \oplus J_5$, such that the $11J_1$ and the $J_5$ are orthogonal to each other, and the restriction of the form to each piece is non-degenerate. One way to see this is to note that $e_2 + e_3, e_4, f_2 + f_3, f_4$ generate $\mathfrak{o}_5 = \mathfrak{sp}_4$ as a subalgebra, and $e_2 + e_3 + e_4$ acts as a Jordan block of size $4$ on the four-dimensional module over $\mathfrak{sp}_4$ and hence as a Jordan block of size $5$ on the five-dimensional module over $\mathfrak{o}_5$ (and once embedded in $\mathfrak{o}_{16}$, it acts trivially on its orthogonal complement, which is $11$-dimensional). Then, when we semisimplify, $O = \mathfrak{o}_{16}$ becomes $\ov{O} = \mathfrak{o}_{11}$ by the arguments in \S\ref{ssclassical}.
\par
Now, let's check what happens to $L(\omega_8)$, which we view as a subspace of $\ee$. The direct sum $\ee = \mathfrak{o}_{16} \oplus L(\omega_8)$ is a direct sum in $\Rep \bal_5$ as well. It can be checked using the software SuperLie that $L(\omega_8) = 32J_4$ in $\Rep \bal_5$, so its semisimplification is purely odd of dimension $32$. We claim that the semisimplification is actually the spinor module $L(\omega_5)$ over $\mathfrak{o}_{11}$.
\par
Since $V = 11J_1 \oplus J_5$ is an orthogonal decomposition in $\Rep \bal_5$, the action of $\mathfrak{o}_{16}$ restricts to an action of $\mathfrak{o}_{11} \oplus \mathfrak{o}_{5}$ on the spinor module $L(\omega_8)$. It is well-known that as an $\mathfrak{o}_{11} \oplus \mathfrak{o}_{5}$-module $L(\omega_8) = L(\omega_5) \otimes L(\omega_2)$ is the tensor product of the corresponding spinor modules. In a suitable basis, the module $L(\omega_5)$ is $32J_1$, and the module $L(\omega_2)$ is $J_4$. After semisimplification, the claim follows.
\par
Therefore, we deduce that the semisimplification of $\ee$ with respect to the adjoint action of $x$ decomposes as $55L_1 \oplus 32L_4$ in $\Ver_5$, so it can be identified with a Lie superalgebra. The even part is the simple Lie algebra $\mathfrak{o}_{11}$, and the odd part is its simple module $L(\omega_5)$. Therefore, by the classification in \cite{bouarroudj2009classification}, this must be the Lie superalgebra $\mathfrak{el}(5;5)$. We have proved the following theorem:

\begin{thm2}
  The semisimplification of $\ee$ as an object in $\Rep \bal_5$ under the adjoint action of $e_2 + e_3 + e_4$ is a Lie algebra in $\Ver_5$ of the form $55L_1 \oplus 32L_4$; in particular, this is the Elduque Lie superalgebra $\mathfrak{el}(5;5)$.
\end{thm2}

\printbibliography

\end{document}